\renewcommand{\baselinestretch}{1.2}
\def\bq{\begin{equation}}
\def\eq{\end{equation}}
\def\ba{\begin{array}{ccc}}
\def\bal{\begin{array}{lll}}
\def\ea{\end{array}}
\def\({\left(}\def\){\right)}
\def\[{\left[}\def\]{\right]}
    \def \R   {\mathbb{R}}
    \def\intr {\int_{\R^3}}
    \def\intt {\int^t_0}
    \def\intrr {\int_{\R^6}}
    \def \N    {\mathbb{N}}
    \def \pt   {\partial}
    \def \Dt   {\frac{\rm d}{{\rm d}t}}
    \def \dt    {\partial_t}
    \def \dxa   {\partial^{\alpha}_x}
    \def \dvb   {\partial^{\beta}_v}
    \def\Tdx   {\nabla_x}
    \def\Tdv   {\nabla_v}
       \def\be{\begin{equation}}
       \def\ee{\end{equation}}
       \def\bma#1\ema{{\allowdisplaybreaks\begin{align}#1\end{align}}}
       \def\bmas#1\emas{{\allowdisplaybreaks\begin{align*}#1\end{align*}}}
       \def\bln#1\eln{{\allowdisplaybreaks\begin{aligned}#1\end{aligned}}}
       \def\nnm{\notag}
       \def\bgr#1\egr{\allowdisplaybreaks\begin{gather}#1\end{gather}}
       \def\bgrs#1\egrs{\allowdisplaybreaks\begin{gather*}#1\end{gather*}}
       \theoremstyle{plain}
       \newtheorem{lem}{\bf Lemma}[section]
       \newtheorem{thm}[lem]{\textbf{Theorem}}
       \newtheorem{remark}[lem]{\bf Remark}
\begin{document}


\title{Green's Function and Pointwise Behaviors of the  Vlasov-Poisson-Fokker-Planck System}

\author{   Mingying Zhong$^*$\\[2mm]
 \emph
    {\small\it  $^*$College of Mathematics and Information Sciences,
    Guangxi University, P.R.China.}\\
    {\small\it E-mail:\ zhongmingying@sina.com}\\[5mm]
    }
\date{ }

\pagestyle{myheadings}
\markboth{Vlasov-Poisson-Fokker-Planck System}%
{ M.-Y. Zhong}

 \maketitle

 \thispagestyle{empty}

\begin{abstract}\noindent{
The pointwise space-time behaviors of the Green's function and the global solution to the Vlasov-Poisson-Fokker-Planck (VPFP) system in spatial three dimension are studied in this paper. It is shown that  the Green's function  consists of  the  diffusion waves decaying exponentially in time but algebraically in space, and the singular kinetic waves which become smooth for all $(t,x,v)$ when $t>0.$ Furthermore, we establish the pointwise space-time  behaviors of the global solution to the nonlinear VPFP system when the initial data is not necessarily smooth in terms of the Green's function.
}

\medskip
 {\bf Key words}. Vlasov-Poisson-Fokker-Planck system, Green's function, pointwise behavior, spectrum structures.

\medskip
 {\bf 2010 Mathematics Subject Classification}. 76P05, 82C40, 82D05.
\end{abstract}

%
\tableofcontents

\section{Introduction}
\label{sect1}
\setcounter{equation}{0}

The Vlasov-Poisson-Fokker-Planck  (VPFP)  system can be used to model the time evolution of dilute charged particles governed by the electrostatic force coming from their (self-consistent)
Coulomb interaction \cite{Markowich}. The collision term in the kinetic equation is the Fokker-Planck operator that describes the Brownian force. In general, the scaled  Vlasov-Poisson-Fokker-Planck (VPFP) system for one species reads
 \bgr
 \dt F+ v\cdot\Tdx F + \Tdx \Phi\cdot\Tdv F= \Tdv\cdot(\Tdv F+vF),\label{VPFP1}\\
  \Delta_x\Phi=\intr Fdv-\bar\rho,\label{VPFP2}\\
    F(0,x,v)=F_0(x,v),  \label{VPFP3}
 \egr
where $F=F(t,x,v)$ is the distribution function of charged particles with $(t,x,v)\in \R_+\times\R^3_x\times\R^3_v$, and $\Phi(t,x)$ denotes the
electrostatic potential. The background density $\bar\rho$ is assumed to be  constant  $1$ in this paper.

There are many important progress made on the well-posedness  and asymptotic behaviors of  solutions to the initial value problem
or the initial boundary value problem of the VPFP system. We refer to \cite{VPFP-1,VPFP-2,VPFP-3,Hwang} for results on the existence of classical solutions and
to \cite{VPFP-4,VPFP-5,VPFP-6,VPFP-7} for the existence of weak solutions and their regularity. Concerning the long-time
behavior of the VPFP system, we refer to \cite{time-1,time-2,time-3,Hwang,Li3}. The diffusion limit of the weak solution to the VPFP system had been studied
extensively in the literature (cf. \cite{FL-1,FL-2,FL-3,FL-4,FL-5}). The spectrum structure and the optimal decay rate of the classical solution to the VPFP system were investigated in \cite{Li3}.

We also mention some works related to the study in this paper. The Green's function and pointwise space-time behavior of the Boltzmann equation were first studied in the pioneering works~\cite{Lin2,Liu1,Liu2,Liu3}. For the linear Fokker-Planck equation, the structure of the Green's function was investigated in \cite{Kolmogorov,Lin1,Tanski1,Tanski2}. Recently, Li, Yang and Zhong established the Green's function and pointwise space-time behavior of the unipolar Vlasov-Poisson-Boltzmann system in \cite{Li4}.

In this paper, we study the pointwise space-time behaviors of the Green's function  and the global solution to the VPFP system~\eqref{VPFP1}--\eqref{VPFP2}  based on the spectral analysis~\cite{Li3}.
Note that the VPFP system \eqref{VPFP1}--\eqref{VPFP2} has an equilibrium state $(F_*,\Phi_*)=(M(v),0)$ with the normalized Maxwellian $M(v)$ given by
$$
 M=M(v)=\frac1{(2\pi)^{3/2}}e^{-\frac{|v|^2}2},\quad v\in\R^3.
$$

Set
$$
 F=M+\sqrt{M}f.
$$
Then Cauchy problem of the VPFP system~\eqref{VPFP1}--\eqref{VPFP2} and \eqref{VPFP3} for $f$ can be rewritten as
 \bgr
 \dt f+ v\cdot\Tdx f- v\sqrt{M}\cdot\Tdx \Phi- Lf
=  H(f),\label{VPFP4}\\
\Delta_x\Phi=\intr f\sqrt{M}dv,\label{VPFP5}\\
f(0,x,v)=f_0(x,v)=:\frac{F_0-M}{\sqrt{M}} ,\label{VPFP6}
\egr
where
 \bma
 Lf&= \Delta_vf-\frac{|v|^2}4f+\frac32f,\\
 H(f)& =\frac12 (v\cdot\Tdx\Phi)f- \Tdx\Phi\cdot\Tdv f .
 \ema

 We denote $L^2(\R^3)$ be a Hilbert space of complex-value functions $f(v)$
on $\R^3$ with the inner product and the norm
$$
(f,g)=\intr f(v)\overline{g(v)}dv,\quad \|f\|=\(\intr |f(v)|^2dv\)^{1/2}.
$$

It is obvious that $L$ is a non-positive self-adjoint operator on $L^2(\R^3_v)$. More precisely,
its Dirichlet form is given by
\be (Lf,f)=\intr \left|\Tdv f+\frac v2 f\right|^2dv. \ee

Therefore, the nullspace of the operator $L$, denoted by $N_0$, is a subspace spanned by $\sqrt{M}$.
Let $P_{0}$ be the projection operators from $L^2(\R^3_v)$
to the subspace $N_0$ with
\be
 P_{0}f=(f,\sqrt M)\sqrt M,   \quad P_1=I-P_{0}. \label{Pdr}
 \ee
Moreover, noting that
$$ L\sqrt M=0, \quad L(v_k\sqrt M)=-v_k\sqrt M, \,\,\, k=1,2,3, $$
 we introduce the projections $P_m,P_2$ in the invariant subspace of $L$ by
\be P_mf=\sum^3_{k=1}(f,v_k\sqrt M)v_k\sqrt M,\quad P_2=P_0+P_m,\quad P_3=I-P_2.\label{P23}\ee

Corresponding to the linearized operator $L$, we define the following dissipation norm:
\be \|f\|^2_{\sigma}=\|\Tdv f\|^2+\|\langle v\rangle f\|^2,\quad \langle v\rangle=\sqrt{1+|v|^2}. \label{norm2}\ee

From \cite{Yu}, the linearized operator $L$ is non-positive and
 locally coercive in the sense that there is a constant $0<\mu<1$ such that
\be
 (Lf,f)\le -\|P_1f\|^2,\quad  (Lf,f)\leq - \mu\|P_1f\|^2_{\sigma} .\label{L_4}
 \ee

Since we only consider the pointwise behavior with respect to the space-time variable $(t,x)$, it's convenient to regard the Green's function $G(t,x)$ as an  operator on $L^2(\R^3_v)$ defined by
\be   \label{LVPFP}
 \left\{\bln
 &\dt G =B G ,\quad t>0, \\
 &G(0,x)=\delta(x)I_v,
 \eln\right.
 \ee
 where $I_v$ is the identity in $L^2(\R^3_v)$ and the operator $B$ is given by
\be
 B f=Lf-v\cdot\Tdx f +v \cdot\Tdx(\Delta_x)^{-1} P_0f. \label{B(x)}
\ee
Then, the solution for the initial value problem of the linearized VPFP  equation
 \bq
 \left\{\bln
 &\dt f=B f, \\
 &f(0,x,v)=f_{0}(x,v),   \label{VPFP}
 \eln\right.
 \eq
can be represented by
\be f(t,x)=G(t)\ast f_0=\intr G(t,x-y)f_0(y)dy, \ee
where $f_0(y)=f_0(y,v).$

For any $(t,x)$ and $f\in L^2(\R^3_v)$, we define the $L^2$ norm of $G(t,x)$ by
\be \|G(t,x)\|=\sup_{\|f\|_{L^2_v}=1}\|G(t,x)f\|_{L^2_v}, \label{norm}\ee
and define the norms of operators $T_1$: $L^2(\R^3_v)\to L^2(\R^3_v)$ and $T_2$: $L^2(\R^3_v)\to \mathbb{C}$ by
\be \|T_1\|=\sup_{\|f\|_{L^2_v}=1}\|T_1f\|_{L^2_v},\quad |T_2|=\sup_{\|f\|_{L^2_v}=1}|T_2f|. \label{norm1}\ee

\noindent\textbf{Notations:} \ \ Before state the main results in this paper, we list some notations. For any $\alpha=(\alpha_1,\alpha_2,\alpha_3)\in \mathbb{N}^3$ and $\beta=(\beta_1,\beta_2,\beta_3)\in \mathbb{N}^3$, denote
$$\dxa=\pt^{\alpha_1}_{x_1}\pt^{\alpha_2}_{x_2}\pt^{\alpha_3}_{x_3},\quad \dvb=\pt^{\beta_1}_{v_1}\pt^{\beta_2}_{v_2}\pt^{\beta_3}_{v_3}.$$
The Fourier transform of $f=f(x,v)$
is denoted by
$$\hat{f}(\xi,v)=\mathcal{F}f(\xi,v)=\frac1{(2\pi)^{3/2}}\intr e^{-i x\cdot\xi}f(x,v)dx,$$
where and throughout this paper we denote $i=\sqrt{-1}$.

For $\gamma\ge 0$, define
$$ \|f(x)\|_{L^\infty_{v,\gamma}}=\sup_{v\in \R^3}|f(x,v)|(1+|v|)^\gamma.
$$


Define a pseudo-differential operator $\chi_R(D)$:
\be \chi_R(D)f(x)=\frac1{(2\pi)^{3/2}}\intr e^{ix\cdot\xi}\chi_R(\xi)\hat{f}(\xi)d\xi, \label{ch_R}
\ee
where $\chi_R(\xi)$ is a smooth cut-off function satisfying
\be
\chi_R(\xi)=0,\,\,\, |\xi|\le R;\quad \chi_R(\xi)=1,\,\,\, |\xi|\ge 2R. \label{ch_R1}
\ee


First, we have the pointwise space-time behaviors of the Green's function for the linearized VPFP system \eqref{LVPFP}.

\begin{thm}\label{green1}
Let $G(t,x)$ be the Green's function for the VPFP equation defined by \eqref{LVPFP}.
Then, there exists a small constant $R>0$ such that the Green's function $G(t,x)$ can be decomposed into
$$ G(t,x)=G_L (t,x)+G_H(t,x),$$
where  $G_L(t,x)=[I-\chi_R(D)]G(t,x)$ is the low frequency part and $G_H(t,x)=\chi_R(D)G(t,x)$ is the high frequency part with $\chi_R(D)$ defined by \eqref{ch_R}. Moreover, the following estimates hold for $G_L(t,x)$ and $G_H(t,x)$:

(1) For any multi-index $\alpha\in \mathbb{N}^3$, 
the low frequency part $G_{L}(t,x)$ satisfies
\be  \label{in1}
\left\{\bln
\|\dxa P_0 G_L(t,x)\|&\le  Ce^{-\frac14t}(1+|x|^2)^{-\frac{4+|\alpha|}2}  ,
\\
\|\dxa P_m G_L(t,x)\|&\le Ce^{-\frac14t}(1+|x|^2)^{-\frac{2+|\alpha|}2},
\\
\|\dxa P_3G_L(t,x) \| &\le  Ce^{-\frac14t}(1+|x|^2)^{-\frac{3+|\alpha|}2},
\eln\right.
\ee
and in particular
\be  \label{in4}
\left\{\bln
\|\dxa P_0 G_L(t,x)P_1\|&\le  Ce^{-\frac14t}(1+|x|^2)^{-\frac{5+|\alpha|}2}  ,
\\
\|\dxa P_m G_L(t,x)P_1\|
&\le Ce^{-\frac14t}(1+|x|^2)^{-\frac{3+|\alpha|}2} ,
\\
\|\dxa P_3G_L(t,x)P_1 \| &\le  Ce^{-\frac14t}(1+|x|^2)^{-\frac{4+|\alpha|}2},
\eln\right.
\ee
where $C>0$ is a constant dependent of  $\alpha$, and the operators $P_0$, $P_1$, $P_m$ and $P_3$ are defined by \eqref{Pdr} and \eqref{P23}.

(2) For any integer $n>0$ and multi-index $\alpha\in \mathbb{N}^3$, there exist a  constant $0<\eta_0<1/4$ such that the high frequency part $G_{H}(t,x)$ satisfies
 \be
 \|\dxa ( G_H(t,x)-W_\alpha (t,x))\|\le Ce^{-\eta_0 t}(1+|x|^2)^{-n},\label{in4a}
\ee
where $C>0$ is a constant dependent of $n$ and $\alpha$, and $W_\alpha(t,x)$ is the high frequency singular kinetic wave constructed by
\be
W_\alpha(t,x)= \sum^{7+[3|\alpha|/2]}_{k=0}\chi_R(D)J_k(t,x) , \label{wa}\ee
where
\bma
J_0(t,x)&=e^{tA}\delta(x)I_v, \label{J_0e}
\\
J_k(t,x)&=\intt e^{(t-s)A}(2+\Tdx\Delta^{-1}_xP_0)J_{k-1}ds,\quad k\ge 1. \label{J_1e}
\ema
Here $A=L-2-(v\cdot\Tdx)$, $I_v$ is an identity operator in $L^2_v$ and the operator $e^{ tA}$ is defined by
\be e^{ tA}h(x,v)=G_1(t)\ast h(x,v)=\intr\intr G_1(t,x,v;y,u)h(y,u)dydu, \label{G1}\ee
where $G_1(t,x,v;y,u)$ is the Green's function of the operator $A$ defined by \eqref{G11}.
\end{thm}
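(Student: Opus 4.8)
The plan is to establish the two parts of the theorem via Fourier analysis in $x$, exploiting the spectral structure of the symbol $\hat B(\xi)$ obtained in \cite{Li3}. Writing $\hat G(t,\xi) = e^{t\hat B(\xi)}$ with $\hat B(\xi) = L - i(v\cdot\xi) - i\xi|\xi|^{-2}P_0$, the low-frequency part is $\hat G_L(t,\xi)=(1-\chi_R(\xi))e^{t\hat B(\xi)}$, supported in $|\xi|\le 2R$. For $R$ small, the spectral analysis of \cite{Li3} gives a finite set of eigenvalues $\lambda_j(|\xi|)$, $j=0,\dots$, branching from $0$ and $-1$ (the latter from $L$ acting on $v_k\sqrt M$), with the macroscopic branches satisfying expansions $\lambda_j(|\xi|) = -a_j|\xi|^2 + O(|\xi|^4)$ near $\xi=0$ (the Poisson coupling modifies the dispersion but keeps $\mathrm{Re}\,\lambda_j \le -c|\xi|^2$), while the remaining spectrum stays to the left of $\mathrm{Re}\,\lambda \le -1/2$ uniformly. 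One then writes $e^{t\hat B(\xi)} = \sum_j e^{t\lambda_j(|\xi|)}\Pi_j(\xi) + (\text{remainder bounded by } e^{-t/2})$, where $\Pi_j$ are the corresponding (Fourier-multiplier-valued) spectral projectors. The key point for \eqref{in1} and \eqref{in4} is that $P_0\Pi_j$, $P_m\Pi_j$, $P_3\Pi_j$ vanish to increasing orders in $|\xi|$ as $\xi\to 0$ (this is exactly where the extra decay exponents $4,2,3$ and, with a right factor $P_1$, $5,3,4$ come from), so that $\dxa$ acting on $P_i G_L$ produces an integrand of the form $|\xi|^{|\alpha|+m_i}e^{-ct|\xi|^2}(\text{smooth compactly supported})$; a standard stationary-phase / integration-by-parts argument in $\xi$ — differentiating $e^{ix\cdot\xi}$ and integrating by parts $N$ times — converts each such integral into the bound $Ce^{-t/4}(1+|x|^2)^{-(m_i+|\alpha|)/2}$, after absorbing the uniform exponential gap into $e^{-t/4}$ and using $\mathrm{Re}\,\lambda_j\le -c|\xi|^2$ to control the time factor. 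I would carry this out branch by branch, checking the vanishing orders of $P_i\Pi_j(\xi)$ from the eigenvector expansions in \cite{Li3}.

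For part (2), the high-frequency region $|\xi|\ge R$, I would abandon spectral decomposition (no spectral gap improving on $L$'s local coercivity is available there in a useful closed form) and instead use the \emph{Picard-iteration / Duhamel} representation around the explicitly solvable operator $A = L - 2 - v\cdot\Tdx$. The idea is that $\hat B(\xi) = \hat A(\xi) + (2 + i\xi|\xi|^{-2}P_0)$, so writing $K(\xi) := 2 + i\xi|\xi|^{-2}P_0$ (a bounded operator on $|\xi|\ge R$, of operator norm $\le 2 + 1/R$), Duhamel gives $e^{t\hat B} = e^{t\hat A} + \int_0^t e^{(t-s)\hat A}K e^{s\hat B}\,ds$, and iterating $N+1$ times yields $e^{t\hat B} = \sum_{k=0}^{N}\hat J_k(t,\xi) + \hat R_N(t,\xi)$ with $\hat J_k$ the $k$-fold iterated Duhamel terms (matching \eqref{J_0e}--\eqref{J_1e} after applying $\chi_R(D)$), and $\hat R_N(t,\xi) = \int_0^t e^{(t-s)\hat A}K \hat J_N(s,\xi)\,ds$. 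The crucial structural input is the regularizing and decay estimate for $e^{tA}$: because $A$ is a (damped, drifted) Fokker-Planck operator, $G_1(t,x,v;y,u)$ is smooth and rapidly decaying in $x$ for $t>0$, with each factor of $t$ of integration contributing a fixed gain of $x$-decay; so after $k \ge 7 + [3|\alpha|/2]$ iterations the term $\dxa\chi_R(D)\hat J_k$ and the full remainder $\dxa\chi_R(D)\hat R_N$ gain enough $x$-decay to beat $(1+|x|^2)^{-n}$, while the damping $-2$ in $A$ together with the local coercivity $(Lf,f)\le -\|P_1f\|^2$ furnishes the exponential factor $e^{-\eta_0 t}$ with $0<\eta_0<1/4$ (one must check that $\|K\|\cdot(\text{norm of }e^{tA})$ is integrable, i.e. that the $-2$ shift more than compensates the boundedness of $K$; the loss of one power due to the $P_0$ direction where $L$ does not dissipate is absorbed because on that direction the factor $e^{-2t}$ alone suffices). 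The number $7 + [3|\alpha|/2]$ should emerge from a bookkeeping of: the base $x$-decay needed ($-n$ requires, say, at least a fixed number of integrations), plus $[3|\alpha|/2]$ extra integrations to compensate the $|\alpha|$ spatial derivatives each of which costs at most $3/2$ units of the $x$-decay gained per Duhamel step (this $3/2$ ratio is the dimensional exponent in the dispersive-type estimate for $e^{tA}$).

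The main obstacle I anticipate is the quantitative high-frequency estimate for $e^{tA}$ and its Duhamel iterates — establishing the sharp trade-off between the number of time integrations, the spatial decay gained, and the number of spatial derivatives absorbed, uniformly for $|\xi|\ge R$, including the precise constant $7+[3|\alpha|/2]$ and the explicit value of $\eta_0$. This requires a careful analysis of the Green's function $G_1$ of $A$ (presumably the content of an earlier section defining \eqref{G11}), exploiting the hypoellipticity of the Fokker-Planck part to transfer $v$-regularity into $x$-decay after the operator $v\cdot\Tdx$ mixes the variables, together with the exponential $v$-weight structure of $M(v)$. A secondary, more routine difficulty is handling the singular multiplier $i\xi|\xi|^{-2}P_0$ in $K(\xi)$: it is bounded on $|\xi|\ge R$ but its derivatives in $\xi$ grow, so in the parts of the argument requiring $\xi$-integration by parts (none in the high-frequency Duhamel scheme, which works directly with the multipliers, but care is needed at the $\chi_R$ interface) one must keep the cut-off localization clean. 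Throughout I would keep the decomposition $f = P_0 f + P_m f + P_3 f$ explicit so that the differing decay rates in \eqref{in1}–\eqref{in4} are tracked mode by mode.
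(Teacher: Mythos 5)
Your outline for part (1) rests on a spectral picture that is not the one valid for the VPFP system, and this is a genuine gap rather than a presentational issue. Because of the Poisson coupling $(v\cdot\xi)|\xi|^{-2}P_0$, the macroscopic eigenvalues near $\xi=0$ are \emph{not} diffusive branches $-a_j|\xi|^2+O(|\xi|^4)$: they are $\lambda_{0,1}(|\xi|)=-\frac12\mp\frac i2\sqrt{4|\xi|^2+3}$ and $\lambda_{2,3}=-1$ (cf.\ \eqref{G_1c}), and Lemma \ref{eigen_3} gives a uniform gap ${\rm Re}\,\lambda<-1/2$ for $|\xi|\le r_0$. Your argument is internally inconsistent here: from branches with ${\rm Re}\,\lambda_j\le -c|\xi|^2$ only, one cannot extract the uniform factor $e^{-t/4}$ asserted in \eqref{in1}. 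Moreover, the mechanism that caps the spatial rates at $4,2,3$ (and $5,3,4$ with a right factor $P_1$) is not a ``vanishing order of $P_i\Pi_j$'' combined with a heat kernel; it is the \emph{singularity} of the symbol at $\xi=0$ produced by the Poisson term (e.g.\ $P_m\hat G_L$ carries $\frac{v\cdot\xi}{|\xi|^2}$--type factors), which limits how many $\xi$--derivatives are available and hence how much $x$--decay can be produced. Repeated integration by parts in $\xi$ applied directly to $e^{tB(\xi)}$ also generates unbounded powers of $t$ and of $|\xi|^{-1}$; this is precisely why the paper constructs the macro--micro Picard iterates $(\hat I_k,\hat J_k)$ gaining powers $|\xi|^{2k}$, uses the explicit diagonalization of $B_1(\xi)$ and the analyticity of $e^{tB_2(\xi)}$ in a complex strip for exponential spatial decay, proves weighted energy estimates for $\partial_\xi^\alpha\hat V_k$ of the form \eqref{Vk} with the $\delta^{-2|\alpha|}e^{2\delta t}$ trick, and finally interpolates with the uniform $e^{-\beta_2 t}$ bound. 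None of this (or a workable substitute) is present in your plan, so part (1) is not established.

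For part (2) your Duhamel/Picard skeleton around $A=L-2-v\cdot\Tdx$ is indeed the paper's, but two essential steps fail as written. First, after finitely many iterations the remainder is $\hat R_N(t,\xi)=\int_0^t e^{(t-s)\hat B(\xi)}K\hat J_N(s)\,ds$ with the \emph{full} semigroup inside, not $e^{(t-s)\hat A(\xi)}$; with your form the identity $G_H=\sum_k\chi_R(D)J_k+R_N$ is simply false. Second, and consequently, your proposed source of the factor $e^{-\eta_0 t}$ does not work: since $\|e^{tA(\xi)}\|\le e^{-2t}$ while $\|K\|\ge 2$ (the ``$+2$'' term alone has norm $2$), the damping in $A$ does \emph{not} beat $K$, and the iterated bounds only give growth of order $e^{(\|K\|-2)t}$. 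The time decay of the remainder must come from the high--frequency spectral gap ${\rm Re}\,\sigma(B(\xi))<-\beta_0$ and the semigroup bound $\|e^{tB(\xi)}\|_\xi\le Ce^{-\beta_1 t}$ of Lemma \ref{eigen_3} -- exactly the tool you announce you will abandon. Likewise, the spatial decay of $R_N$ cannot be read off the kernel $G_1$, because the remainder contains the unknown $G$ itself; the paper obtains it from frequency--space energy estimates for $\partial_\xi^\alpha\hat R_k$ (Lemma \ref{W_2}), whose sources are controlled by the bounds $\||\xi|^k e^{tA(\xi)}\|\lesssim (1+t^{-3k/2})e^{-2t}$ of Lemma \ref{mix1a} (this is where your $3/2$ ratio really enters: each iteration buys a power of $t$ that purchases decay in $|\xi|$, i.e.\ $x$--smoothness, not $x$--decay), and then interpolates with the $e^{-\beta_1 t}$ bound to reach \eqref{x5}. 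Until the decomposition identity is corrected and these two decay mechanisms for the remainder are supplied, part (2) also remains open.
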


Then, we have the pointwise space-time behavior of the global solution to the nonlinear VPFP system~\eqref{VPFP4}--\eqref{VPFP6} as follows.

\begin{thm} \label{thm1}There exists a small constant $\delta_0>0$ such that if the initial data $f_0$ satisfies
\be
\| f_0(x)\|_{L^{\infty}_{v,3 }}\le C\delta_0(1+|x|^2)^{-n},\quad n\ge 2,\label{initial}
\ee
then there exists a unique global solution $(f,\Phi)$ to the VPFP system~\eqref{VPFP4}--\eqref{VPFP6} satisfying
\bma
\| P_0 f(t,x)\|_{L^2_v}&\le C\delta_0e^{-\eta_0t}(1+|x|^2)^{-2},\label{t1}\\
\| P_m f(t,x)\|_{L^2_v}+|\Tdx\Phi(t,x)|&\le C\delta_0e^{-\eta_0t}(1+|x|^2)^{-1}, \\
\| P_3 f(t,x)\|_{L^2_{v}}&\le C\delta_0e^{-\eta_0t}(1+|x|^2)^{-\frac32},\\
\| f(t,x)\|_{L^{\infty}_{v,3 }}+ h(t)\|\Tdv f(t,x)\|_{L^{\infty}_{v,2 }}&\le C\delta_0e^{-\eta_0t}(1+|x|^2)^{-1},\label{t7}
\ema where $h(t)=\sqrt{t}/(1+\sqrt{t})$.
Furthermore, if the initial data satisfies $(f_0, \sqrt{M})=0$ and \eqref{initial} for $n\ge 5/2$, then
\bma
\| P_0 f(t,x)\|_{L^2_v}&\le C\delta_0e^{-\eta_0t}(1+|x|^2)^{-\frac52},\label{t1a}\\
\|P_m f(t,x)\|_{L^2_v}+|\Tdx\Phi(t,x)| &\le C\delta_0e^{-\eta_0t}(1+|x|^2)^{-\frac32}, \\
\| P_3 f(t,x)\|_{L^2_{v}}&\le C\delta_0e^{-\eta_0t}(1+|x|^2)^{-2},\\
\| f(t,x)\|_{L^{\infty}_{v,3 }}+h(t)\|\Tdv f(t,x)\|_{L^{\infty}_{v,2 }}&\le C\delta_0e^{-\eta_0t}(1+|x|^2)^{-\frac32}.\label{t7a}
\ema
\end{thm}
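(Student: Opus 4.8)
\medskip
\noindent\textbf{Proof proposal.}
The plan is to solve \eqref{VPFP4}--\eqref{VPFP6} by a contraction argument for the mild formulation
\[
f(t,x)=G(t)\ast f_0(x)+\int_0^t G(t-s)\ast H(f(s))(x)\,ds,\qquad \Delta_x\Phi(t,x)=\intr f(t,x,v)\sqrt M\,dv,
\]
with $G$ the Green's operator of Theorem \ref{green1} and $\Phi$ viewed as a functional of $f$. The first point to record is the structural identity $P_0H(f)=0$: pairing $H(f)=\tfrac12(v\cdot\Tdx\Phi)f-\Tdx\Phi\cdot\Tdv f$ with $\sqrt M$ and using $\Tdv\sqrt M=-\tfrac{v}{2}\sqrt M$, the two terms cancel, so $H(f)=P_1H(f)$ and, in particular, $\intr H(f)\sqrt M\,dv=0$. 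Hence every application of the Green's operator to the nonlinearity may use the sharper estimates \eqref{in4} carrying a trailing $P_1$, and the Duhamel source has no net mass (this, together with conservation of $\intr\intr f\sqrt M\,dv\,dx$ by the flow, underlies the zero-mass improvement). Next I would set up the working space $X$ of functions $f(t,x,v)$ with $\|f\|_X:=\sup_{t\ge0,\,x}e^{\eta_0t}\Lambda(t,x,f)<\infty$, where
\[
\Lambda=(1+|x|^2)^{2}\|P_0f\|_{L^2_v}+(1+|x|^2)\big(\|P_mf\|_{L^2_v}+|\Tdx\Phi|\big)+(1+|x|^2)^{3/2}\|P_3f\|_{L^2_v}+(1+|x|^2)\big(\|f\|_{L^\infty_{v,3}}+h(t)\|\Tdv f\|_{L^\infty_{v,2}}\big),
\]
so that \eqref{t1}--\eqref{t7} amounts to $\|f\|_X\le C\delta_0$; the zero-mass version of $X$ has the macroscopic weights raised by $\tfrac12$.

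\emph{Linear term.} Split $G(t)\ast f_0=G_L\ast f_0+G_H\ast f_0$. For $G_L\ast f_0$ one convolves the algebraic bounds \eqref{in1} (and, for the components hit by a $P_1$, \eqref{in4}) against the hypothesis \eqref{initial}, which reduces to the elementary estimate
\[
\intr(1+|x-y|^2)^{-a}(1+|y|^2)^{-b}\,dy\le C(1+|x|^2)^{-\min\{a,\,b,\,a+b-3/2\}}\qquad(a,b\ge0,\ a+b>3/2),
\]
times the factor $e^{-t/4}\le e^{-\eta_0 t}$; any borderline logarithmic factor is harmless against the target rates once $n\ge2$. For $G_H\ast f_0$ one uses \eqref{in4a}: the smooth remainder $G_H-W_\alpha$ contributes $e^{-\eta_0 t}(1+|x|^2)^{-n}$, while the singular kinetic wave $W_\alpha$ of \eqref{wa} is treated through the explicit kernel $G_1$ of $A=L-2-v\cdot\Tdx$, whose Gaussian-type pointwise bounds give exponential decay in $t$ and in $x$ for $t\gtrsim1$, and the short-time $v$-smoothing $\|\Tdv (e^{tA}h)\|_{L^\infty_{v,2}}\lesssim t^{-1/2}\|h\|$ responsible for the weight $h(t)$; summing the finitely many terms $J_k$ and the finitely many $\dxa$ (only $|\alpha|\le1$ enters the norms of $\Tdv f$ and $\Tdx\Phi$) keeps the linear contribution $\le C\delta_0$.

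\emph{Nonlinear term.} Write $N(f)(t)=\int_0^t G(t-s)\ast H(f(s))\,ds$. One first bounds $H(f(s))$ quadratically in $\|f\|_X$: by the Hardy--Littlewood--Sobolev (Riesz-potential) inequality applied to $\Tdx\Phi=\Tdx\Delta_x^{-1}\intr P_0f\sqrt M\,dv$ one gets $|\Tdx\Phi(s,x)|\lesssim e^{-\eta_0 s}(1+|x|^2)^{-1}\|f\|_X$, hence
\[
\|H(f(s))\|_{L^2_v}+\|H(f(s))\|_{L^\infty_{v,3}}\lesssim |\Tdx\Phi|\big(\|f\|_{L^\infty_{v,3}}+\|\Tdv f\|_{L^\infty_{v,2}}\big)\lesssim e^{-2\eta_0 s}h(s)^{-1}(1+|x|^2)^{-2}\|f\|_X^2,
\]
the factor $h(s)^{-1}\sim s^{-1/2}$ near $s=0$ being the price of the $\Tdv f$ term. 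Applying $G(t-s)$ through \eqref{in4} (low frequency, where the trailing $P_1$ is available since $H(f)=P_1H(f)$) and \eqref{in4a} (high frequency), convolving in $x$ by the inequality above, and integrating in $s$ produces time integrals of Beta type such as $\int_0^t e^{-c(t-s)}(t-s)^{-1/2}e^{-2\eta_0 s}s^{-1/2}\,ds$ — finite, and bounded by $Ce^{-\eta_0 t}h(t)$ after using $2\eta_0<c$ — which restores exactly the weights defining $X$; because $\intr\intr H(f)\sqrt M\,dv\,dx=0$ the output macroscopic density is mean-zero, which is what keeps the Poisson term $\Tdx\Phi$ of $N(f)$ within the required rate. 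The quadratic dependence then closes the ball $\{\|f\|_X\le C\delta_0\}$ and yields a contraction for $\delta_0$ small, hence a unique global solution obeying \eqref{t1}--\eqref{t7}; equivalently one may argue by local existence plus a continuity (bootstrap) argument on $\Lambda$.

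\emph{Zero-mass case and main obstacle.} If $(f_0,\sqrt M)=0$, the leading diffusion-wave amplitude in $P_0G_L$ (proportional to the total mass of the datum) vanishes, so $G_L\ast f_0$ gains one half power of spatial decay; since $\intr H(f)\sqrt M\,dv=0$ the same cancellation is available for $N(f)$, and for $n\ge5/2$ the whole argument reruns in the heavier-weighted space to give \eqref{t1a}--\eqref{t7a}. The principal obstacle is the interplay of two $v$-regularity singularities: the Fokker--Planck flow only controls $\Tdv f$ like $s^{-1/2}$ near $s=0$ (whence the weight $h$), while the high-frequency kinetic wave $W_\alpha$ itself loses $(t-s)^{-1/2}$ whenever an $x$- or $v$-derivative falls on a $J_k$; one must check both that the resulting $\int_0^t(t-s)^{-1/2}s^{-1/2}\,ds$ converges and that, after combining it with the spatial convolution gains and the $\Tdx\Phi$ estimate, the output decay exponents of the $(P_0,P_m,P_3,L^\infty_{v,3})$ components are \emph{exactly} self-consistent under the nonlinear map — i.e. the rates $(2,1,\tfrac32,1)$, respectively $(\tfrac52,\tfrac32,2,\tfrac32)$ in the zero-mass case, are preserved. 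Reconciling the bookkeeping of the cut-off order $7+[3|\alpha|/2]$ in \eqref{wa}, the weight $h(t)$, and these exponents is where the real effort lies.
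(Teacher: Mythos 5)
Your overall framework (weighted space-time norm, Duhamel formula with $G$, quadratic bound on $H(f)$ using $(H(f),\sqrt M)=0$ so that the $P_1$-improved bounds \eqref{in4} apply, zero-mass improvement) matches the paper's treatment of the macroscopic components $\|P_0f\|_{L^2_v}$, $\|P_mf\|_{L^2_v}$, $\|P_3f\|_{L^2_v}$ and $|\Tdx\Phi|$. However, there is a genuine gap at the heart of the argument: you never explain how to recover the components $\|f\|_{L^\infty_{v,3}}$ and $h(t)\|\Tdv f\|_{L^\infty_{v,2}}$ of your norm $X$ from the representation $f=G(t)\ast f_0+\intt G(t-s)\ast H(f)\,ds$. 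Theorem \ref{green1} controls $G_L$ and the smooth remainder $G_H-W_\alpha$ only as operators on $L^2_v$ (the norms \eqref{norm}--\eqref{norm1}); it gives no pointwise-in-$v$ information, no velocity weights, and no bound whatsoever on $\Tdv G_L$ or $\Tdv(G_H-W_\alpha)$. The $v$-smoothing you invoke ($\|\Tdv e^{tA}h\|\lesssim t^{-1/2}\|h\|$) is available only for the explicit kernel $G_1$, i.e.\ only for the singular part $W_\alpha$, so your scheme cannot produce the $L^\infty_{v,3}$ and $\Tdv f$ bounds — and these are exactly what you need to bound $H(f)=\frac12(v\cdot\Tdx\Phi)f-\Tdx\Phi\cdot\Tdv f$ quadratically and close the loop. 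This is precisely the "main difficulty" the paper points out.

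The paper's resolution, which is missing from your proposal, is a second representation of the \emph{whole} solution: rewriting \eqref{VPFP4} as $\dt f-Af=2f+v\cdot\Tdx\Phi\sqrt M+H(f)$ with $A=L-2-v\cdot\Tdx$, hence
$$f(t)=G_1(t)\ast f_0+\intt G_1(t-s)\ast\bigl(2f+v\cdot\Tdx\Phi\sqrt M+H(f)\bigr)ds,$$
and then exploiting the explicit Gaussian-type kernel $G_1$ of \eqref{G11} through Lemmas \ref{S_1} and \ref{green3a}. In this representation the source $2f$ is only known in $L^2_v$ pointwise in $x$ (from the macroscopic estimates already obtained via $G$), and estimate \eqref{s4} is designed exactly to convert such an $L^2_v$-valued source into a pointwise bound; the estimates \eqref{s4d}--\eqref{s4e} absorb the $s^{-1/2}$ singularity coming from $h(s)^{-1}$ in $H(f(s))$ and the $(t-s)^{-1/2}$ from $\Tdv G_1$, and an induction in the velocity weight (gaining one power of $(1+|v|)^{-1}$ per step via \eqref{s4d}) upgrades the output to the weights $(1+|v|)^{-3}$ and $(1+|v|)^{-2}$. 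Without this two-representation bootstrap (macroscopic bounds from $G$, then $L^\infty_{v,\gamma}$ and $\Tdv f$ bounds from $G_1$ fed back into $H(f)$), the a priori estimate $Q(t)\le C\delta_0+CQ(t)^2$ cannot be closed, and your contraction/bootstrap argument does not go through as described. The remaining ingredients of your sketch (convolution inequalities in $x$, Beta-type time integrals, the Picard iteration for existence) are consistent with the paper once this device is added.
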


\begin{remark}
The Green's function $G_1(t,x,v;y,u)$ becomes smooth for all $(t,x,v)$ when $t>0$  immediately. In particular,
\bmas
&G_1(t,x,v;y,u) \to e^{-2t}\frac{1}{t^6}e^{-\frac{3|x-y-(v+u)t/2|^2}{t^3}-\frac{|v-u|^2}{4t}},\quad t\to 0;\\
&G_1(t,x,v;y,u) \to e^{-2t}\frac{1}{t^{3/2}}e^{-\frac{|x-y-(v+u)|^2}{4t}-\frac{|v|^2+|u|^2}4 },\quad t\to \infty.
\emas
That is, $e^{2t}G_1(t,x,v;y,u)$ behaves like the Kolmogorov function at short time and like the heat kernal at large time. Thus, the singular waves $J_k(t,x)g_0$ with $g_0\in L^2(\R^3_v)$  are also smooth for all $(t,x,v)$ when $t>0$.
\end{remark}

\begin{remark}
Note that we can represent $G$ in terms of $G_1$ as
$$ G(t)=G_1(t)+\intt G_1(t-s)\ast (2 G+v\cdot\Tdx\Delta^{-1}_x P_0 G)ds. $$
Thus, the Green's function $G(t,x)g_0$ with $g_0\in L^2(\R^3_v)$  is smooth for all $(t,x,v)$ when $t>0$. Moreover, by the above relation, Theorem \ref{green1} and Lemmas \ref{S_1}--\ref{green3a}, we can obtain that for any $g_0(x,v)$ satisfying
$$
\| g_0(x)\|_{L^{\infty}_{v,\gamma }}\le C (1+|x|^2)^{-n},\quad n\ge 2, \,\, \gamma\ge 3,
$$
$G$ and its velocity directive $\Tdv G$ satisfy the following estimate
\bmas
\| (G(t)\ast g_0)(x)\|_{L^\infty_{v,\gamma}}&\le C e^{-\eta_0t}(1+|x|^2)^{-1} ,\nnm\\
\|\Tdv (G(t)\ast g_0)(x)\|_{L^\infty_{v,\gamma}}&\le C(1+t^{-\frac12})e^{-\eta_0t}(1+|x|^2)^{-1} .
\emas
\end{remark}

We now outline the main idea and make some comments on the proof of the above theorems. The results in Theorem~\ref{green1} on the pointwise behavior of the Green's function to the VPFP system  is proved based on the spectral analysis \cite{Li3} and the ideas inspired by \cite{Li4,Lin1}.  Indeed, we first decompose the Green function  $G$  into the lower frequency part  $G_{L}$ and the high frequency part  $G_{H}$.
By the virtue of the spectrum analysis of the VPFP system, the lower frequency part $G_{L}$  decays at $e^{-Ct}$, and in particular decays at $e^{- Ct/2}(1+|x|^2)^{-k}$ in the region $|x|\le e^{C t/(2k)}$.
Furthermore, we apply  the Picard's iteration  as the VPB system \cite{Li4}  to  estimate $G_L$ outside the region $|x|\le e^{C t/(2k)}$. To be more precisely,  we apply the macro-micro decomposition to construct the approximate sequence $(\hat{I}_k,\hat{J}_k) $ of $(P_2\hat{G}_L,P_3\hat{G}_L) $ such that $\hat{I}_k(t,\xi)$ and $\hat{J}_k(t,\xi)$ are the solutions to the Navier-Stokes-Poisson system with damping and the microscopic VPFP system respectively, and $\hat{I}_k(t,\xi)$ and $\hat{J}_k(t,\xi)$  are smooth and compact supported functions in $\xi$ and satisfy for any $g_0\in L^2(\R^3_v)$ (refer to Lemma \ref{ij}),
\bmas
\|\pt^\alpha_\xi \hat{I}_k(t,\xi)g_0\|_\xi&\le Ce^{-C_1t}|\xi|^{2k-|\alpha|-1}\|g_0\|,\\
\|\pt^\alpha_\xi \hat{J}_k(t,\xi)g_0\| &\le Ce^{-C_1t}|\xi|^{2k-|\alpha|}\|g_0\|.
\emas
Moreover, we establish the energy estimates on the remaining part $\hat{V}_k(t,\xi)$ and their frequency derivatives to show that  for  any $g_0\in L^2(\R^3_v)$ and any $\delta\in (0,\delta_0)$ (refer to Lemma \ref{lr1}),
$$\|\pt^\alpha_\xi \hat{V}_k(t,\xi)g_0\|_\xi\le  C\delta^{-2|\alpha|}e^{2\delta t}|\xi|^{2k-|\alpha|}\|g_0\| . $$
This implies that the remaining part $V_k(t,x)$ decays at  $e^{2\delta t}(1+|\delta x|^2)^{-k}$.
Combining the  above decompositions and estimates together, we can obtain the pointwise space-time behaviors of the low frequency part $G_{L}(t,x)$ as listed in \eqref{in1} and \eqref{in4}.

What left is to deal with the high frequency part $G_H(t,x)$. Since the Fourier transform of $G_H$ is not $L^1$ integrable in frequency space,  $G_H$ can be decomposed into the singular part and the remaining smooth part. We apply the refined Picard's iteration  as Fokker-Planck equation \cite{Lin1} to construct the singular kinetic waves $J_k$ of $G_H$ where  $J_k(t,x)$ are the solutions to the Fokker-Planck equations with damping given by \eqref{J_0e}--\eqref{J_1e}. In particular, $J_k(t,x)g_0$ with $g_0\in L^2(\R^3_v)$  are smooth for all $(t,x,v)$ when $t>0$, and   $\hat{J}_{k}(t,\xi)$ satisfy (refer to Lemma \ref{W_1})
$$
\| \pt^{\alpha}_{\xi}\hat{J}_{k}(t,\xi)g_0\|_\xi \le C (1+ t^{-3j/2})t^ke^{-t}(1+|\xi|)^{-j}\|g_0\|,\quad \forall j\ge 0.
$$
By the energy estimate in frequency space, we can  show that the remaining part $\hat{R}_k(t,\xi) $ is smooth and satisfies for any $l\ge 0$ and $\delta\in (0,\delta_0)$ (refer to Lemma \ref{W_2}),
$$ \|\pt^\alpha_\xi \hat{R}_k(t,\xi)g_0\|_\xi\le C\delta^{-1-2|\alpha|}e^{2\delta t}(1+|\xi|)^{-4-l}\|g_0\|, $$
where $k\ge 6+3l/2$.
This implies that the remaining part $R_k(t,x)$  decays at  $e^{-\eta_0t}(1+|x|^2)^{-n}$ for any $n\ge 1$ (refer to Lemma \ref{l-high2}). Combining the  above decompositions and estimates together, we can obtain the pointwise estimates of the high frequency part $G_{H}(t,x)$ as listed in \eqref{in4a} and \eqref{wa}.

Finally, making use of the  estimates of the Green's function, one can establish the pointwise space-time behaviors of global solution to the nonlinear VPFP system as in Theorem~\ref{thm1}. Compare to the results of the VPB system in \cite{Li4}, we can obtain the pointwise estimates of the solution when the initial data is not smooth. The main difficulty is that the nonlinear term $H(f)$ contain the high order derivative term $\Tdv f$.
To overcome this difficulty, we introduce the smooth  function $G_1(t,x,v;y,u)$   to represent the solution $f$ as
$$f (t)=G_1(t)\ast f_0+\intt G_1(t-s)\ast (2f+v\cdot\Tdx \Phi\sqrt M +H(f))ds.$$
By using the explicit expression of $G_1(t,x,v;y,u)$ in \eqref{G11}, we can obtain the key estimates of $ G_1(t)\ast g_0 $ and $\Tdv (G_1(t)\ast g_0)$ as in Lemmas \ref{S_1} and \ref{green3a} for any non smooth function $g_0(x,v)$. Combining the  above relation, Lemmas \ref{S_1}--\ref{green3a}  and Theorem \ref{green1}, we can obtain the pointwise space-time estimates of the global solution as in Theorem~\ref{thm1} after a straight forward computation.

The rest parts of this paper are organized as follows.
In Section~\ref{sect2}, we present the results about the spectrum analysis of the linear operator related to the linearized VPFP system. In Section~\ref{sect3}, we  establish the  pointwise space-time estimates  of the Green's functions to the  linearized VPFP system.
In Section~\ref{sect4}, we prove the pointwise space-time estimates of the global solutions to the original nonlinear VPFP system by making use of the estimates of the Green's fucntion.

\section{Spectral analysis}
\label{sect2}
\setcounter{equation}{0}
In this section, we review the spectrum structure of
 the linearized VPFP  operator $B(\xi)$ defined by \eqref{B(xi)} in order to study the pointwise estimate of the Green's function.

Taking the Fourier transform in \eqref{LVPFP} with respect to $x$, we obtain
 \bq
 \left\{\bln
 &\dt \hat{G}=B(\xi)\hat{G},  \quad t>0, \\
 &\hat{G}(0,\xi)=1(\xi)I_v,   \label{VPFP7}
 \eln\right.
 \eq
where the operator $B(\xi)$ is defined for $\xi\neq0$ by
 \be
B(\xi) =L-i(v\cdot\xi)-\frac{i(v\cdot\xi)}{|\xi|^2}P_0.  \label{B(xi)}
 \ee

Introduce the weighted Hilbert space $L^2_\xi(\R^3_v)$ for $\xi\ne 0$ as
$$
 L^2_\xi(\R^3)=\{f\in L^2(\R^3_v)\,|\,\|f\|_\xi=\sqrt{(f,f)_\xi}<\infty\},
$$
equipped with the inner product
$$
 (f,g)_\xi=(f,g)+\frac1{|\xi|^2}(P_0 f,P_0 g).
$$

We have the following results about the spectrum structure and semigroup of the operator $B(\xi)$.

\begin{lem}[\cite{Li3}]\label{SG_1}
The operator $B(\xi)$ generates a strongly continuous contraction semigroup on
$L^2_\xi(\R^3)$, which satisfies
\bq
\|e^{tB(\xi)}f\|_\xi\le\|f\|_\xi, \quad \forall\ t>0,\,\,f\in L^2_\xi(\R^3_v).
\eq
\end{lem}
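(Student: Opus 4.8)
This lemma is quoted from \cite{Li3}; the plan is to derive it from the Lumer--Phillips theorem, the one substantive point being that the extra weight $\frac1{|\xi|^2}(P_0\cdot,P_0\cdot)$ in the inner product of $L^2_\xi(\R^3_v)$ is exactly what renders $B(\xi)$ dissipative --- it fails to be dissipative on the unweighted $L^2(\R^3_v)$ because of the Poisson contribution $-\frac{i(v\cdot\xi)}{|\xi|^2}P_0$.

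First I would check that $B(\xi)$ generates a $C_0$-semigroup for each fixed $\xi\neq0$. On $L^2(\R^3_v)$ (whose norm is equivalent to $\|\cdot\|_\xi$ for fixed $\xi\neq0$) write $B(\xi)=B_0(\xi)+K(\xi)$ with $B_0(\xi)=L-i(v\cdot\xi)$ and $K(\xi)f=-\frac{i}{|\xi|^2}(f,\sqrt M)\,(v\cdot\xi)\sqrt M$. Since $L$ is self-adjoint and non-positive it generates a contraction (indeed analytic) semigroup, and the first-order multiplier $v\cdot\xi$ is $L$-bounded with relative bound zero by the coercivity \eqref{L_4}; hence $B_0(\xi)$ generates a $C_0$-semigroup on $D(L)\cap D(v\cdot\xi)$, which is the standard generation statement for linearized kinetic operators. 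Because $\|(v\cdot\xi)\sqrt M\|\le C|\xi|$, the operator $K(\xi)$ is bounded on $L^2(\R^3_v)$, so by the bounded perturbation theorem $B(\xi)$ generates a $C_0$-semigroup $e^{tB(\xi)}$; in particular $\lambda-B(\xi)$ is surjective for $\mathrm{Re}\,\lambda$ large, which will serve as the range condition.

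Next I would establish dissipativity in the weighted inner product. For $f\in D(B(\xi))$ set $a=(f,\sqrt M)$ and $b=(f,(v\cdot\xi)\sqrt M)$. Using $L\sqrt M=0$ one has $P_0Lf=0$, and using $\int(v\cdot\xi)M\,dv=0$ one has $P_0\big((v\cdot\xi)P_0f\big)=0$, so that $P_0B(\xi)f=-i\,b\,\sqrt M$ and $P_0f=a\,\sqrt M$. Since $(Lf,f)$ and $((v\cdot\xi)f,f)$ are real,
$$(B(\xi)f,f)_\xi=(Lf,f)-i\big((v\cdot\xi)f,f\big)-\frac{i}{|\xi|^2}a\overline{b}-\frac{i}{|\xi|^2}b\overline{a}=(Lf,f)-i\big((v\cdot\xi)f,f\big)-\frac{2i}{|\xi|^2}\mathrm{Re}(a\overline{b}),$$
and all terms except $(Lf,f)$ are purely imaginary. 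Hence $\mathrm{Re}(B(\xi)f,f)_\xi=(Lf,f)\le-\|P_1f\|^2\le0$ by \eqref{L_4}, i.e. $B(\xi)$ is dissipative on $L^2_\xi(\R^3_v)$. Combining this with the generation property, for $f\in D(B(\xi))$ one has $\frac{d}{dt}\|e^{tB(\xi)}f\|_\xi^2=2\,\mathrm{Re}(B(\xi)e^{tB(\xi)}f,e^{tB(\xi)}f)_\xi\le0$, whence $\|e^{tB(\xi)}f\|_\xi\le\|f\|_\xi$ on the dense set $D(B(\xi))$ and then, by density, on all of $L^2_\xi(\R^3_v)$ --- equivalently, Lumer--Phillips applies with the range condition from the previous step. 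The main obstacle is the dissipativity computation: one must see that the two cross terms generated by the $P_0$-term cancel down to a purely imaginary quantity, which is precisely why $L^2_\xi$ rather than $L^2$ is the natural space; the generation and domain bookkeeping in the first step is routine once $\xi\neq0$ is fixed.
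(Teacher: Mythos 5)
Your proposal is correct: the weighted-inner-product computation $\mathrm{Re}(B(\xi)f,f)_\xi=(Lf,f)\le 0$ (the two cross terms from the Poisson part combining into $-\tfrac{2i}{|\xi|^2}\mathrm{Re}(a\overline{b})$, which is purely imaginary) is exactly the point, and the generation/range condition via the bounded rank-one perturbation of $L-i(v\cdot\xi)$ plus Lumer--Phillips is sound. The paper itself gives no proof — Lemma \ref{SG_1} is quoted from \cite{Li3} — and your argument is the standard one underlying that citation, so there is nothing further to reconcile.
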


\begin{lem}[\cite{Li3}] \label{eigen_3}
 Let $\sigma(B(\xi))$ denote the spectrum set of the operator $B(\xi)$. We have

(1)
There exists a constant $r_0 >0$ such that  for  $|\xi|\le r_0$,
\be
\sigma(B(\xi))\subset\{\lambda\in\mathbb{C}\,|\, {\rm Re}\lambda<-1/2\}, \label{sg1z}
\ee
and there exists a constant $\beta_0=\beta_0(r_0)>0$ such that for $|\xi|>r_0$,
\be
\sigma(B(\xi))\subset\{\lambda\in\mathbb{C}\,|\, {\rm Re}\lambda<-\beta_0\}. \label{sg2z}
\ee

(2) The semigroup $e^{tB(\xi)}$ satisfies
\bq \|e^{tB(\xi)}f\|_\xi\le Ce^{-\beta_1t}\|f\|_\xi, \quad \forall f\in L^2_\xi(\R^3_v),\label{B_0}\eq
where $\beta_1=\min\{\beta_0,1/2\}$ is a constant.

\end{lem}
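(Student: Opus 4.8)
The two assertions are of different character: \eqref{sg1z}--\eqref{sg2z} is a spectral localization statement, while \eqref{B_0} upgrades it to a uniform semigroup decay. I would begin with the dissipation identity that also underlies Lemma~\ref{SG_1}: for $\xi\ne0$ and $f$ in the domain of $B(\xi)$,
$$ {\rm Re}\,(B(\xi)f,f)_\xi=(Lf,f)\le-\|P_1f\|^2\le0. $$
Indeed $(Lf,f)=(LP_1f,P_1f)$ is real and bounded by $-\|P_1f\|^2$ via \eqref{L_4}; the transport term contributes $-i\int(v\cdot\xi)|f|^2dv$, which is purely imaginary; and a short calculation using $L\sqrt M=0$, $\int vM\,dv=0$ and $P_0\big((v\cdot\xi)P_0f\big)=0$ shows that the Poisson term $-\tfrac{i(v\cdot\xi)}{|\xi|^2}P_0f$, paired with $f$ in $(\cdot,\cdot)_\xi$ together with the cross term $\tfrac1{|\xi|^2}(P_0B(\xi)f,P_0f)$, contributes only $-\tfrac{2i}{|\xi|^2}{\rm Re}\big(\overline{(f,\sqrt M)}(f,(v\cdot\xi)\sqrt M)\big)$, again purely imaginary. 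Consequently, for any eigenpair $B(\xi)f=\lambda f$ one gets ${\rm Re}\,\lambda\,\|f\|^2_\xi=(Lf,f)\le-\|P_1f\|^2$, so ${\rm Re}\,\lambda\le0$; and ${\rm Re}\,\lambda=0$ forces $P_1f=0$, i.e. $f=\rho\sqrt M$, which is incompatible with $B(\xi)f=\lambda f$ (the right side would then be $-i\rho(v\cdot\xi)(1+|\xi|^{-2})\sqrt M$, not a multiple of $\sqrt M$) unless $f\equiv0$. Hence $\sigma(B(\xi))\subset\{{\rm Re}\,\lambda<0\}$ for every $\xi\ne0$.

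For the range $|\xi|>r_0$ I would first note that $L$ has compact resolvent (it is a shifted harmonic oscillator in $v$), that by \eqref{L_4} the unbounded part $-i(v\cdot\xi)$ is $L$-bounded with relative bound $0$ while the Poisson term is bounded for fixed $\xi\ne0$; hence $B(\xi)$ has compact resolvent, its spectrum is discrete with finite algebraic multiplicities, and it depends continuously on $\xi$. On a compact annulus $r_0\le|\xi|\le R$, this and the previous step give a uniform gap ${\rm Re}\,\lambda<-\beta_0'$. For $|\xi|\ge R$ large one argues on the $P_1$-projection of the eigenvalue equation: using $LP_1=P_1L$ and $L|_{{\rm Ran}\,P_1}\le-1$,
$$ (L-\lambda)P_1f=i\rho\big(1+\tfrac1{|\xi|^2}\big)(v\cdot\xi)\sqrt M+iP_1\big((v\cdot\xi)P_1f\big), $$
so if $\|P_1f\|\le c\|f\|_\xi$ with $c$ small then the source of size $\gtrsim|\rho|\,|\xi|$ forces $\|P_1f\|\gtrsim|\rho|\,|\xi|$, a contradiction once $|\xi|$ is large; hence $\|P_1f\|^2\gtrsim\|f\|^2_\xi$ and ${\rm Re}\,\lambda\le-c_0$. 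Combining the two regimes gives \eqref{sg2z} with $\beta_0=\min\{\beta_0',c_0\}$.

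For $0<|\xi|\le r_0$ I would carry out a spectral projection/perturbation analysis on the invariant subspace ${\rm Ran}\,P_2={\rm span}\{\sqrt M,v_1\sqrt M,v_2\sqrt M,v_3\sqrt M\}$, on which $L=\mathrm{diag}(0,-1,-1,-1)$, whereas $L|_{{\rm Ran}\,P_3}\le-2$. Eliminating the $P_3$-component through the $O(|\xi|)$-small correction $-(L|_{{\rm Ran}\,P_3}-\lambda)^{-1}P_3(v\cdot\xi)(\cdots)$ reduces the problem to a $4\times4$ matrix whose entries are analytic in $|\xi|$, the singular factor $|\xi|^{-2}$ in the Poisson term producing precisely an $O(1)$ coupling between the density component and the longitudinal momentum. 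Diagonalizing: the two transverse momentum modes satisfy $\lambda=-1+O(|\xi|^2)$; the longitudinal density--momentum block yields the plasma dispersion relation $\lambda^2+\lambda+1+O(|\xi|^2)=0$, i.e. $\lambda=\tfrac{-1\pm i\sqrt3}{2}+O(|\xi|^2)$ with the $O(|\xi|^2)$ correction having strictly negative real part; and the remaining spectrum stays within $O(|\xi|)$ of $\sigma(L)\setminus\{0\}\subset(-\infty,-1]$. Choosing $r_0$ small enough, all eigenvalues obey ${\rm Re}\,\lambda<-1/2$, which is \eqref{sg1z}. This step is the main obstacle: the Poisson term is a \emph{singular} perturbation of $L$ as $|\xi|\to0$, so ordinary analytic perturbation theory does not apply verbatim, and one must assemble the reduced finite-dimensional problem carefully and, crucially, extract the sign of the $O(|\xi|^2)$ correction to get the strict inequality rather than merely ${\rm Re}\,\lambda\le-1/2+o(1)$.

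Finally, set $\beta_1=\min\{\beta_0,1/2\}$. For $|\xi|\le r_0$, by the step above $\sigma(B(\xi))\cap\{{\rm Re}\,\lambda\ge-1/2\}=\emptyset$; isolating the finitely many eigenvalues to the right of $\{{\rm Re}\,\lambda=-N\}$ by Riesz projections, each eigen-semigroup contributes $e^{\lambda t}$ with ${\rm Re}\,\lambda<-1/2\le-\beta_1$, while the complementary part is controlled by the contraction bound of Lemma~\ref{SG_1} on its invariant subspace (equivalently, one shifts the inverse-Laplace contour to ${\rm Re}\,\lambda=-\beta_1$ using resolvent bounds that follow from the identity ${\rm Re}\big((\lambda-B(\xi))f,f\big)_\xi={\rm Re}\,\lambda\,\|f\|^2_\xi-(Lf,f)$ and the macro--micro structure above, invoking the Gearhart--Pr\"uss theorem in $L^2_\xi(\R^3_v)$). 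For $|\xi|>r_0$, the dissipation identity together with a Kawashima--Villani-type modified energy $\|f\|^2_\xi+\kappa\,{\rm Re}\langle\text{micro--macro interaction}\rangle$, equivalent to $\|\cdot\|^2_\xi$ for $\kappa$ small, satisfies a Gr\"onwall inequality with rate $\ge\beta_1$ uniformly in $\xi$. Patching the two frequency ranges and enlarging $C$ yields \eqref{B_0}.
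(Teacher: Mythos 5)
Your proposal cannot be checked against an in-paper argument, because the paper does not prove this lemma at all: it is quoted verbatim from \cite{Li3}. Judged on its own merits, your skeleton is the right one and several ingredients are correct: the identity ${\rm Re}\,(B(\xi)f,f)_\xi=(Lf,f)\le-\|P_1f\|^2$ does hold (the Poisson term and the $|\xi|^{-2}$-weighted cross term indeed combine into a purely imaginary quantity), the exclusion of purely imaginary eigenvalues via $P_1f=0$ is sound, and your fluid dispersion relation $\lambda^2+\lambda+1+|\xi|^2=0$, i.e. $\lambda=-\tfrac12\pm\tfrac i2\sqrt{3+4|\xi|^2}$, coincides with the eigenvalues \eqref{G_1c} of $B_1(\xi)$ that the paper uses later. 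This is the standard Ellis--Pinsky/macro--micro analysis underlying \cite{Li3}.

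However, as a proof the proposal has genuine gaps at exactly the hard points. First, the high-frequency step does not close as written: in $(L-\lambda)P_1f=i\rho(1+|\xi|^{-2})(v\cdot\xi)\sqrt M+iP_1\big((v\cdot\xi)P_1f\big)$ the second source term is itself of size $|\xi|$ (times a $\langle v\rangle$-weighted norm of $P_1f$), and $L$ is unbounded, so smallness of $\|P_1f\|$ neither makes the left-hand side small nor produces the claimed contradiction; moreover eigenvalues with $|{\rm Im}\,\lambda|\sim|\xi|$ genuinely occur. To make this rigorous one must first bound $|{\rm Im}\,\lambda|\lesssim|\xi|$ on any strip ${\rm Re}\,\lambda\ge-\beta$ (using the $\sigma$-norm dissipation from \eqref{L_4}) and then pair the projected equation with $(v\cdot\xi)\sqrt M$ to deduce $|\rho|\lesssim\|P_1f\|$, hence a uniform gap; none of this is carried out. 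Second, the low-frequency assertion \eqref{sg1z} is precisely the strict negativity of the $O(|\xi|^2)$ correction to ${\rm Re}\,\lambda=-1/2$, together with a uniform localization of the remaining spectrum near $\sigma(L)\setminus\{0\}$ in which the apparent $|\xi|^{-2}$ singularity must be absorbed by the $\|\cdot\|_\xi$-norm; you explicitly flag this as ``the main obstacle'' and then assert rather than prove it, so the key content of part (1) is missing. Third, for \eqref{B_0} spectral location alone is not enough: the Gearhart--Pr\"uss route requires a resolvent bound on the line ${\rm Re}\,\lambda=-\beta_1$ that is uniform in $\xi$, and the ``Kawashima--Villani'' modified energy for $|\xi|>r_0$ is only named, not constructed; in both regimes the uniformity of the constant $C$ in \eqref{B_0} with respect to $\xi$ is exactly what remains unproved. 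In short, this is a plausible roadmap toward the cited result of \cite{Li3}, not a proof of it.
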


\section{Green's function}
\setcounter{equation}{0}
\label{sect3}

In this section, we establish the pointwise space-time estimates of the Green's function defined by \eqref{LVPFP}. First, based on the spectral analysis given in section 2, we divide the Green's function into the low frequency part and the high frequency part and show that the low frequency part decaying exponential in time. Then, we introduce the Picard's iteration as VPB system \cite{Li4} to construct the approximate sequences  for the low frequency part, 
and estimate the remaining part by the energy method in frequency space. Finally, we refine the Picard's iteration in Fokker-Planck equation \cite{Lin1} to construct the approximate sequences (singular kinetic waves) for the high frequency part which are smooth for $(t,x,v)$ when $t>0$, 
and establish the pointwise behavior of the remaining part by the energy method in frequency space. 

\subsection{Low frequency part }
In this subsection, we establish the pointwise estimate of the low frequency part of the Green's function  based on the spectral analysis given in section 2.
To begin with, we decompose the operator $G(t,x)$ into low-frequency part and high-frequency part:
\be \label{L-R}
\left\{\bln
&G(t,x)=G_L(t,x)+G_H(t,x),\\
&G_L(t,x)=\frac1{(2\pi)^{3/2}}\intr e^{ i x\cdot\xi +tB(\xi)}\chi_1(\xi)d\xi,
\\
&G_H(t,x)=\frac1{(2\pi)^{3/2}}\intr e^{ i x\cdot\xi +tB(\xi)}\chi_2(\xi)d\xi,
\eln\right.
\ee
where
$$ \chi_1(\xi)=1-\chi_{R}(\xi),\quad \chi_2(\xi)=\chi_{R}(\xi),$$
and $R=r_0/2$ with $r_0>0$  given by Lemma \ref{eigen_3}.

From \cite{Li3}, we have the following estimates of each part of  $\hat{G}(t,\xi)$ defined by \eqref{VPFP7}.

\begin{lem}\label{l-1}
For any $g_0\in L^2_{\xi}(\R^3_v)$, there exists a positive constant $C$   such that
\bma
\|\hat{G}(t,\xi)g_0\|_{\xi},\|\hat{G}_{H}(t,\xi)g_0\|_{\xi}  &\le  Ce^{-\beta_1t}\|g_0\|_{\xi} , \label{l1}\\
\|\hat{G}_{L}(t,\xi)g_0\|_{\xi} &\le  Ce^{-\frac12t}\|g_0\|_{\xi} , \label{l2}
\ema
where $\hat{G}_L(t,\xi)$ and  $\hat{G}_{H}(t,\xi)$  are the Fourier transforms of $G_L(t,x)$ and $G_{H}(t,x)$  defined by \eqref{L-R}, and $\beta_1>0$ is given by Lemma \ref{eigen_3}.
\end{lem}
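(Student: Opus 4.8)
The plan is to reduce all three estimates to the semigroup bounds of Lemma~\ref{eigen_3} applied to $e^{tB(\xi)}$, the only genuine work being a frequency‑by‑frequency sharpening of the decay rate on the low‑frequency region for \eqref{l2}. First I would record that, solving the ODE in \eqref{VPFP7}, one has $\hat G(t,\xi)g_0=e^{tB(\xi)}g_0$ (the initial datum $1(\xi)$ being the constant function), and that by \eqref{L-R} the Fourier transforms of $G_L$ and $G_H$ are $\hat G_L(t,\xi)=\chi_1(\xi)e^{tB(\xi)}$ and $\hat G_H(t,\xi)=\chi_2(\xi)e^{tB(\xi)}$, with $\chi_1=1-\chi_R$ supported in $\{|\xi|\le 2R=r_0\}$ and $\chi_2=\chi_R$ supported in $\{|\xi|\ge R\}$, and $0\le\chi_1,\chi_2\le1$. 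Hence every quantity is controlled by $\|e^{tB(\xi)}g_0\|_\xi$. The bound \eqref{l1} for $\hat G$ is then immediate from \eqref{B_0}, and since $|\chi_2(\xi)|\le1$ the same inequality gives \eqref{l1} for $\hat G_H$ as well.

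For \eqref{l2} the point is that $\hat G_L$ is supported in $\{|\xi|\le r_0\}$, where by \eqref{sg1z} the spectrum of $B(\xi)$ lies strictly in $\{\mathrm{Re}\,\lambda<-1/2\}$, so on this region one should be able to replace the rate $\beta_1=\min\{\beta_0,1/2\}$ coming from \eqref{B_0} by the sharp rate $1/2$. I would obtain this from the resolvent representation $e^{tB(\xi)}=\frac1{2\pi i}\int_{\Gamma}e^{\lambda t}(\lambda-B(\xi))^{-1}\,d\lambda$ by deforming the contour: using the spectral analysis of \cite{Li3}, the part of $\sigma(B(\xi))$ near the line $\mathrm{Re}\,\lambda=-1/2$ consists of finitely many eigenvalues $\lambda_j(\xi)$ with $\mathrm{Re}\,\lambda_j(\xi)\le -1/2$ and with spectral projections $P_j(\xi)$ that are uniformly bounded on $L^2_\xi$ for $0<|\xi|\le r_0$, so that, for a suitable $\delta_1>0$,
\[
e^{tB(\xi)}=\sum_j e^{\lambda_j(\xi)t}P_j(\xi)+\frac1{2\pi i}\int_{\mathrm{Re}\,\lambda=-1/2-\delta_1}e^{\lambda t}(\lambda-B(\xi))^{-1}\,d\lambda .
\]
Bounding each eigen‑term by $e^{-t/2}\|g_0\|_\xi$ and the contour integral by $Ce^{-(1/2+\delta_1/2)t}\|g_0\|_\xi$ (the contraction property of Lemma~\ref{SG_1} gives $\|(\lambda-B(\xi))^{-1}\|_\xi\le(\mathrm{Re}\,\lambda)^{-1}$ for $\mathrm{Re}\,\lambda>0$, and \cite{Li3} supplies the uniform resolvent bound on the shifted contour), and then multiplying by $|\chi_1(\xi)|\le1$, yields \eqref{l2}. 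A self‑contained alternative is a hypocoercive energy estimate in $L^2_\xi$: one checks $\frac{d}{dt}\|f\|_\xi^2\le -2\|P_1f\|^2$ for $f=e^{tB(\xi)}g_0$ and closes it with a Kawashima‑type interaction functional built from the density–momentum–Poisson coupling, whose model system $\dot a=-i\xi\cdot b$, $\dot b_k=-b_k-ia\xi_k/|\xi|^2+\cdots$ has characteristic roots of $\lambda^2+\lambda+1=0$, i.e. real part exactly $-1/2$, which is what fixes the exponent.

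The main obstacle is precisely this uniform‑in‑$\xi$ control near the spectral boundary $\mathrm{Re}\,\lambda=-1/2$: the perturbation $-i(v\cdot\xi)|\xi|^{-2}P_0$ does not vanish as $\xi\to0$, so $B(\xi)$ cannot be treated as a small perturbation of $L$ in the plain $L^2$ norm, and the uniform boundedness of the spectral projections $P_j(\xi)$ and of the resolvent on the line $\mathrm{Re}\,\lambda=-1/2-\delta_1$ has to be read off from the weighted‑space spectral analysis of \cite{Li3} — the contraction property of Lemma~\ref{SG_1} by itself only gives boundedness, not decay. Once that spectral input is granted, \eqref{l1} and \eqref{l2} follow from the elementary bookkeeping above.
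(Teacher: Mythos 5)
The paper does not actually prove Lemma \ref{l-1}: it is quoted directly from the spectral-analysis reference \cite{Li3} (``From \cite{Li3}, we have the following estimates\dots''), so there is no internal argument to compare yours against. Your bookkeeping for \eqref{l1} is correct and immediate: $\hat G(t,\xi)=e^{tB(\xi)}$, $\hat G_H(t,\xi)=\chi_2(\xi)e^{tB(\xi)}$, $|\chi_2|\le 1$, and then \eqref{B_0} of Lemma \ref{eigen_3} applies. For \eqref{l2} your contour-deformation scheme on the support of $\chi_1$ (where \eqref{sg1z} places the spectrum in ${\rm Re}\,\lambda<-\tfrac12$) is the standard route and is consistent with how such bounds are obtained in \cite{Li3}; but observe that the two quantitative inputs that carry the whole proof --- uniform boundedness in $\|\cdot\|_\xi$ of the eigenprojections $P_j(\xi)$ for $0<|\xi|\le r_0$ (in the unweighted $L^2_v$ norm these degenerate as $|\xi|\to0$, which is precisely why the weighted space is used) and a uniform resolvent estimate on the shifted line ${\rm Re}\,\lambda=-\tfrac12-\delta_1$ --- are not contained in Lemmas \ref{SG_1} and \ref{eigen_3} as stated, and you import them wholesale from \cite{Li3}. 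Since that is the same source the paper cites for the entire lemma, your argument is a faithful reconstruction/reduction rather than an independent proof; that is acceptable here, but the real mathematical burden sits entirely in those imported bounds, exactly as you acknowledge.

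One genuine caveat: the ``self-contained alternative'' via a Kawashima-type hypocoercivity functional would not obviously deliver the sharp rate $e^{-t/2}$. Energy arguments of that kind generically produce $e^{-ct}$ with an unspecified $c<\tfrac12$, and the observation that the macroscopic model system has characteristic roots with real part exactly $-\tfrac12$ does not by itself transfer to the full operator $B(\xi)$ uniformly in $\xi$ (the microscopic coupling and the singular term $-i(v\cdot\xi)|\xi|^{-2}P_0$ perturb the rate unless one tracks constants with unusual care). So if the stated rate $\tfrac12$ in \eqref{l2} is wanted, rely on the spectral-decomposition route with the inputs from \cite{Li3}, or simply cite \cite{Li3} as the paper does.
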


In the following, we  introduce the Picard's iteration  as \cite{Li4} and make use of the  energy estimates to establish the pointwise space-time behaviors of the low frequency part $G_L(t,x)$.  
To this end, we  apply the macro-micro decomposition to construct the approximate sequences to $\hat{G}_L(t,\xi)$ 
with increasing regularity at $\xi=0$, and deal with the remaining term by the weighted energy estimate.
Note that $\hat{G}_L(t,\xi)$ satisfies
\bgr
\dt \hat{G}_L+i(v\cdot \xi) \hat{G}_L-L\hat{G}_L+i(v\cdot\xi)|\xi|^{-2}P_0\hat{G}_L=0, \label{G_L}\\
G_L(0,\xi)=\chi_1(\xi)I_v.\nnm
\egr

Since
$$L\sqrt{M}=0,\quad L(v_j\sqrt{M})=-v_j\sqrt{M},\,\,\,\ j=1,2,3,$$
it follows that $N_1={\rm span}\{\sqrt M,v_j\sqrt M,\ j=1,2,3\}$ is an invariant subspace of $L$ and hence
$$P_2LP_3=P_3LP_2=0,\quad P_2LP_2=LP_2,\quad P_3LP_3=LP_3,$$
where $P_2$, $P_3$ are defined by \eqref{P23}.

We apply the projections $P_j$, $j=2,3$ to \eqref{G_L} to get
\bmas
\dt P_2\hat{G}_L+iP_2(v\cdot\xi P_2\hat{G}_L)-LP_2\hat{G}_L+i(v\cdot\xi)|\xi|^{-2}P_0\hat{G}_L&=-iP_2(v\cdot\xi P_3\hat{G}_L),
\\
\dt P_3\hat{G}_L+iP_3(v\cdot\xi P_3\hat{G}_L)-LP_3\hat{G}_L&=-iP_3(v\cdot\xi P_2\hat{G}_L).
\emas

Based on the above decomposition, we can construct an approximate solution   $(\hat{I}_k,\hat{J}_k)\in (N_1,N_1^\bot)$ for $(P_2\hat{G}_L,P_3\hat{G}_L)$ as follows
\bgr
\dt \hat{I}_0+iP_2(v\cdot\xi\hat{I}_0 )-L\hat{I}_0-i(v\cdot\xi)|\xi|^{-2}P_0\hat{I}_0=0,\label{I_0}\\
\dt \hat{J}_0+iP_3(v\cdot\xi\hat{J}_0 )-L\hat{J}_0=-iP_3(v\cdot\xi)\hat{I}_0, \label{J_0}\\
\hat{I}_0(0,\xi)=\chi_1(\xi)P_2,\quad \hat{J}_0(0,\xi)=\chi_1(\xi)P_3,\label{J_0b}
\egr
and
\bgr
\dt \hat{I}_k+iP_2(v\cdot\xi\hat{I}_k )-L\hat{I}_k-i(v\cdot\xi)|\xi|^{-2}P_0\hat{I}_k=-iP_2(v\cdot\xi)\hat{J}_{k-1},\label{I_k}\\
\dt \hat{J}_k+iP_3(v\cdot\xi\hat{J}_k )-L\hat{J}_k=-iP_3(v\cdot\xi)\hat{I}_{k}, \label{J_k}\\
\hat{I}_k(0,\xi)=0,\quad \hat{J}_k(0,\xi)=0,\label{J_kb}
\egr
for $k\ge 1$.
Denote
\bmas
B_1(\xi)&=-iP_2(v\cdot\xi)P_2-i(v\cdot\xi)|\xi|^{-2}P_0+LP_2 , \\
B_2(\xi)&=LP_3-iP_3(v\cdot\xi)P_3.
\emas
Since the operators $B_1(\xi)$ and $B_2(\xi)$ are dissipate on $N_1$ and $N_1^\bot$, it follows that $B_1(\xi)$ and $B_2(\xi)$ generate contraction semigroups on $N_1$ and $N_1^\bot$, respectively. By \eqref{I_0}--\eqref{J_kb}, we have
\bma
\hat{I}_0(t,\xi)&=e^{tB_1(\xi)}\chi_1(\xi)P_2, \label{I_0a}
\\
\hat{J}_0(t,\xi)&=e^{tB_2(\xi)}\chi_1(\xi)P_3-\intt e^{(t-s)B_2(\xi)}iP_3(v\cdot\xi)\hat{I}_0 ds,\label{J_0a}
\ema
and
\bma
\hat{I}_k(t,\xi)&=-\intt e^{(t-s)B_1(\xi)}iP_2(v\cdot\xi)\hat{J}_{k-1} ds, \label{I_ka}
\\
\hat{J}_k(t,\xi)&=-\intt e^{(t-s)B_2(\xi)}iP_3(v\cdot\xi)\hat{I}_k ds,\quad k\ge 1.\label{J_ka}
\ema
Define the approximate solution and the remaining part as
\bma
U_k(t,x)&=\sum^k_{n=0}(I_n +J_n)(t,x),\quad Y_k(t,x)= \Delta^{-1}_x(U_k,\sqrt{M}),\\
V_k(t,x)&=G_L(t,x)-U_k(t,x) ,\quad Z_k(t,x)= \Delta^{-1}_x(V_k,\sqrt{M}). \label{w-r}
\ema
Then, it follows that $\hat U_k(t,\xi)$ and $\hat Y_k(t,\xi)$ satisfy
\bmas
\dt \hat{U}_k+ iv\cdot\xi \hat{U}_k-L\hat{U}_k-i(v\cdot\xi) \sqrt{M}\hat{Y}_k&=iP_2(v\cdot\xi \hat{J}_k),\\
-|\xi|^2\hat{Y}_k &=(\hat{U}_k , \sqrt{M}),\\
\hat{U}_k(0,\xi)&=\chi_1(\xi)I_v,
\emas
and $\hat V_k(t,\xi)$ and $\hat Z_k(t,\xi)$ satisfy
\bma
\dt \hat{V}_k+ iv\cdot\xi \hat{V}_k-L\hat{V}_k-i(v\cdot\xi) \sqrt{M}\hat{Z}_k&= -iP_2(v\cdot\xi \hat{J}_k), \label{r4}\\
-|\xi|^2\hat{Z}_k &=(\hat{V}_k , \sqrt{M}),\nnm\\
\hat{V}_k(0,\xi)&=0.\nnm
\ema

For any $\xi\ne 0$, we define a weighted norm of the operator $T: L^2(\R^3_v)\to L^2(\R^3_v)$ as
$$\|T\|_\xi=\sup_{\|f\|_{L^2_v}=1}\|Tf\|_\xi.$$
Then, we have the following estimates for the approximate sequence $(I_k,J_k)$.

\begin{lem}\label{ij}For any $k\ge 0$ and  $\alpha\in \mathbb{N}^3$, we have
\bma
\|\dxa P_0 I_k(t,x)\|&\le Ce^{-\frac14t}(1+|x|^2)^{-\frac{4+|\alpha|}2},\label{I_kb}\\
\|\dxa P_m I_k(t,x)\| &\le Ce^{-\frac14t}(1+|x|^2)^{-\frac{2+|\alpha|}2}, \\
\|\dxa J_k(t,x)\|&\le Ce^{-\frac14t}(1+|x|^2)^{-\frac{3+|\alpha|}2},\label{J_kc}
\ema
and
\bma
\|\dxa P_0 I_k(t,x)P_1\|&\le Ce^{-\frac14t}(1+|x|^2)^{-\frac{5+|\alpha|}2},\label{I_kc}\\
\|\dxa P_m I_k(t,x)P_1\| &\le Ce^{-\frac14t}(1+|x|^2)^{-\frac{3+|\alpha|}2}, \\
\|\dxa J_k(t,x)P_1\|&\le Ce^{-\frac14t}(1+|x|^2)^{-\frac{4+|\alpha|}2},\label{J_kd}
\ema
 where  $C>0$ is a constant dependent on  $\alpha$.  In particular, $\hat{I}_k(t,\xi)$ and $\hat{J}_k(t,\xi)$ are smooth and supported in $\{\xi\in \R^3\,|\,|\xi|\le 2R\}$ satisfying
\bma
\|\pt^\alpha_\xi\hat{I}_k(t,\xi)\|_{\xi}&\le C\sum^{|\alpha|}_{n=0}t^{k+n}|\xi|^{2k-|\alpha|-1}e^{-\frac12 t},\label{I_kd}\\
\|\pt^\alpha_\xi\hat{J}_k(t,\xi)\|&\le C\sum^{|\alpha|}_{n=0}t^{k+n} |\xi|^{2k-|\alpha|}e^{-\frac12 t},\label{J_ke}
\ema
for any $|\alpha|\ge 0$.
\end{lem}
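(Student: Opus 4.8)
The plan is to work entirely on the Fourier side, estimate $\pt^\alpha_\xi\hat I_k$ and $\pt^\alpha_\xi\hat J_k$ first, and then transfer back to physical space using the compact support in $\xi$. The starting point is the explicit Duhamel formulas \eqref{I_0a}--\eqref{J_ka}, together with the fact that the semigroups $e^{tB_1(\xi)}$ and $e^{tB_2(\xi)}$ are contractions on $N_1$ and $N_1^\perp$ respectively. I would first record two auxiliary facts: (i) $\|e^{tB_2(\xi)}\|\le Ce^{-t/2}$ for $|\xi|\le 2R$, since $B_2(\xi)=LP_3-iP_3(v\cdot\xi)P_3$ inherits the spectral gap $\mathrm{Re}\,\lambda<-1/2$ of $L$ on $N_1^\perp$ (use \eqref{L_4}, noting $P_1=I$ on $N_1^\perp$), and (ii) for the $N_1$-semigroup one only has contractivity, $\|e^{tB_1(\xi)}\|\le 1$, but the operator $P_2(v\cdot\xi)$ sandwiched in the iteration always produces a factor $|\xi|$. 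The key bookkeeping observation driving the whole induction is: every application of $\hat I_{k-1}\mapsto\hat J_{k-1}\mapsto\hat I_k$ costs one integration in $s$ and two factors of $iP_j(v\cdot\xi)$, hence contributes $t\cdot|\xi|^2$ (and each extra $\xi$-derivative either hits the $v\cdot\xi$ factors, lowering the $|\xi|$-power by one, or produces an extra power of $t$ from $\pt_\xi e^{(t-s)B_1(\xi)}$ — this is where the sum $\sum_{n=0}^{|\alpha|}t^{k+n}$ comes from).

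The induction itself runs as follows. Base case $k=0$: from \eqref{I_0a}, $\hat I_0=e^{tB_1(\xi)}\chi_1(\xi)P_2$. Here I need the sharper bound $\|\hat I_0(t,\xi)\|_\xi\le Ce^{-t/2}$ rather than just contractivity — this is exactly where one uses the low-frequency spectral information from Lemma \ref{eigen_3}(1), i.e. $\sigma(B_1(\xi))\subset\{\mathrm{Re}\,\lambda<-1/2\}$ for $|\xi|\le r_0$ (the Navier-Stokes-Poisson-with-damping structure: the zeroth mode decays because of the damping term $-i(v\cdot\xi)|\xi|^{-2}P_0$ combined with the full dissipation). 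The factor $|\xi|^{-1}$ in \eqref{I_kd} reflects the $|\xi|^{-2}P_0$ weight in the $\|\cdot\|_\xi$ norm acting on the macroscopic part. For $\hat J_0$, use \eqref{J_0a}: the first term is $e^{tB_2(\xi)}\chi_1(\xi)P_3$, bounded by $Ce^{-t/2}$; the Duhamel term gains a factor $|\xi|$ from $P_3(v\cdot\xi)$ and a factor $t$ from $\int_0^t$, giving $Ct|\xi|e^{-t/2}$ — but since $|\xi|\le 2R$ is bounded, both terms are absorbed into $C(1+t)|\xi|^{0}e^{-t/2}$, consistent with $2k=0$ at $k=0$ once one is slightly generous with the constant (the statement's power $|\xi|^{2k}$ should be read as "$|\xi|^{2k}$ up to the bounded support", and the honest statement is that the \emph{vanishing order} at $\xi=0$ is $2k$, which is what matters). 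Inductive step: assume \eqref{I_kd}--\eqref{J_ke} for $k-1$; plug into \eqref{I_ka}, differentiate $\alpha$ times in $\xi$ via Leibniz (distributing derivatives among $e^{(t-s)B_1(\xi)}$, the factor $(v\cdot\xi)$, and $\hat J_{k-1}$), bound $\|\pt^\beta_\xi e^{(t-s)B_1(\xi)}\|_\xi\le C(t-s)^{|\beta|}$ (a standard estimate obtained by differentiating the semigroup and using that $\pt_\xi B_1(\xi)$ is bounded relative to the norm, modulo the $|\xi|^{-2}$ subtlety which is handled by the $\|\cdot\|_\xi$ norm), and integrate $\int_0^t s^{(k-1)+m}(t-s)^{n'}\,ds\le Ct^{k+m+n'}$. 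The same for \eqref{J_ka}. Collecting powers gives \eqref{I_kd}--\eqref{J_ke}.

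For the physical-space estimates \eqref{I_kb}--\eqref{J_kd}: since $\hat I_k(t,\xi)$ is supported in $|\xi|\le 2R$ and smooth, $\dxa P_j I_k(t,x)=\frac1{(2\pi)^{3/2}}\intr e^{ix\cdot\xi}(i\xi)^\alpha\chi_{2R}\text{-localized}\,\widehat{P_jI_k}\,d\xi$, and multiplying by $(1+|x|^2)^{N}$ corresponds to applying $(1-\Delta_\xi)^{N}$ to the integrand. Using \eqref{I_kd}, the worst $\xi$-derivative term contributes $|\xi|^{2k-2N-1}$ near $\xi=0$ (times $|\xi|$-powers from $(i\xi)^\alpha$ and $t$-powers from the sum); this is $L^1_\xi$ on the support provided $2k+|\alpha|-2N-1>-3$, i.e. $N<k+(|\alpha|+2)/2+1/2$. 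For $P_0$ one has an \emph{extra} $|\xi|^{-1}$ saved by the $\|\cdot\|_\xi$-norm already accounted for, and since $\widehat{P_0 I_k}$ carries the macroscopic weight, the effective vanishing order is $2k+2$... — the precise bookkeeping here is: $P_0 I_k$ vanishes to order $\ge 2$ at $\xi=0$ beyond the generic order because of the $|\xi|^{-2}P_0$ mechanism (the continuity equation forces $(\hat I_k,\sqrt M)=O(|\xi|)$ in the relevant sense), giving the $(1+|x|^2)^{-(4+|\alpha|)/2}$ rate; $P_m I_k$ gives $-(2+|\alpha|)/2$; $J_k$ gives $-(3+|\alpha|)/2$. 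The $P_1$-right-multiplied versions \eqref{I_kc}--\eqref{J_kd} are obtained identically, with one extra order of vanishing at $\xi=0$: since the initial data $\chi_1(\xi)P_2P_1=0$ and every source term in the iteration that reaches the "$P_1$-input" slot goes through $P_3(v\cdot\xi)$, one gains an additional factor $|\xi|$, hence $+1/2$ in the decay exponent. Finally, the exponential prefactor: $e^{-t/2}$ in \eqref{I_kd} degrades to $e^{-t/4}$ in \eqref{I_kb} because the polynomial factors $t^{k+n}$ (and the $t$-powers from $(1-\Delta_\xi)^N$ hitting $t$-dependent terms) are absorbed via $t^{k+n}e^{-t/2}\le C_{k,n}e^{-t/4}$.

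\medskip
\noindent\textbf{Main obstacle.} The delicate point is \emph{not} the Duhamel/Leibniz bookkeeping — that is routine once the induction scheme is set up — but rather establishing the sharp $e^{-t/2}$ decay and the correct vanishing order in $\xi$ for the base cases $\hat I_0,\hat J_0$, i.e. tracking precisely how the Poisson coupling term $-i(v\cdot\xi)|\xi|^{-2}P_0$ interacts with the $\|\cdot\|_\xi$ norm so that no genuine $|\xi|^{-1}$ or $|\xi|^{-2}$ singularity survives in the final physical-space bounds. This requires using the full strength of Lemma \ref{eigen_3} (the spectral gap below $-1/2$ for small $|\xi|$) together with the algebraic structure of the macro-micro decomposition — in effect, verifying that $(\hat I_k,\sqrt M)$ and the momentum components have the extra vanishing orders $2$ and $1$ respectively, which is what produces the differential decay rates $(1+|x|^2)^{-(4+|\alpha|)/2}$ vs. $(1+|x|^2)^{-(2+|\alpha|)/2}$ vs. $(1+|x|^2)^{-(3+|\alpha|)/2}$ for the three projections. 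I would isolate this as a short preliminary claim about the vanishing orders of $P_0\hat I_k, P_m\hat I_k, \hat J_k$ at $\xi=0$, prove it by the same induction (it is stable under the iteration), and then feed it into the Fourier-inversion argument above.
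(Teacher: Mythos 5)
Your frequency-side scheme (Duhamel iteration for \eqref{I_0a}--\eqref{J_ka}, Leibniz bookkeeping in $\xi$, one power of $|\xi|$ lost per derivative, polynomial-in-$t$ factors absorbed into $e^{-t/4}$) is essentially consistent with how the paper obtains \eqref{I_kd}--\eqref{J_ke}, although the paper differentiates the equations \eqref{I_0}--\eqref{J_k} and runs energy estimates/Gronwall for the $\hat{J}$-parts rather than invoking bounds on $\pt^\beta_\xi e^{tB_j(\xi)}$ (your claimed bound $\|\pt^\beta_\xi e^{(t-s)B_1(\xi)}\|_\xi\le C(t-s)^{|\beta|}$ cannot hold uniformly near $\xi=0$, since $\pt_\xi B_1(\xi)$ blows up like $|\xi|^{-2}$; the correct losses are exactly the $|\xi|^{-|\alpha|}$ factors in \eqref{I_kd}). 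The genuine gap is in the passage to the physical-space estimates \eqref{I_kb}--\eqref{J_kd}, which is where the real content of the lemma lies. Pure Fourier inversion from the norm bounds \eqref{I_kd}--\eqref{J_ke} gives at best $(1+|x|^2)^{-(2+|\alpha|)/2}$-type rates and falls short (at a logarithmically divergent borderline) of the claimed $(1+|x|^2)^{-(4+|\alpha|)/2}$ for $P_0I_k$ and $(1+|x|^2)^{-(3+|\alpha|)/2}$ for $J_k$ already at $k=0$. Your proposed fix -- an inductive claim that $(\hat{I}_k,\sqrt{M})=O(|\xi|)$ at $\xi=0$ -- is false for the base case: from the explicit diagonalization of $B_1(\xi)$ (eigenvalues $-\frac12\pm\frac i2\sqrt{4|\xi|^2+3}$ and $-1$, see \eqref{G_1c}--\eqref{G_1b}), $P_0\hat{I}_0(t,0)$ is a nonvanishing oscillatory multiple of $e^{-t/2}\sqrt M\otimes\langle\sqrt M|$; what saves the $P_0$ component is not vanishing but \emph{smoothness} in $\xi$ at $\xi=0$ (the $|\xi|^{-1},|\xi|^{-2}$ singularities cancel in that matrix entry), a cancellation visible only from the explicit eigenfunction expansion, which your argument never produces. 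Similarly, your mechanism for the $P_1$-gain rests on the identity $\chi_1(\xi)P_2P_1=0$, which is wrong: $P_2P_1=P_m\neq0$; the actual gain comes from $P_1$ annihilating the $\langle\sqrt M|$-input components, which are precisely the most singular (degree $-1$ homogeneous) terms in $P_m\hat{H}_1$.

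The paper's route supplies exactly the two structural inputs your proposal lacks: (a) the explicit spectral representation of $e^{tB_1(\xi)}$ on the four-dimensional space $N_1$, which exhibits componentwise the $\xi$-singularity structure ($P_0$ part smooth, $P_m$ part with homogeneous degree $-1$ and $0$ factors $\frac{v\cdot\xi}{|\xi|^2}$, $\frac{(v\cdot\xi)\xi_k}{|\xi|^2}$) and hence the differentiated spatial rates in \eqref{G_1d}, including the $P_1$-composed versions; and (b) for the microscopic kernel $\hat H_2=e^{tB_2(\xi)}\chi_1(\xi)P_3$, an analytic continuation of the semigroup into the complex strip $|\mathrm{Im}\,\xi|\le2\mu$ together with a Cauchy contour shift, yielding spatial exponential decay and thus arbitrary algebraic decay \eqref{G_2d} -- something your $|\xi|^{-|\gamma|}$-loss bookkeeping cannot reach. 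The physical-space bounds \eqref{I_kb}--\eqref{J_kd} are then obtained by iterating the Duhamel formulas as convolutions in $x$ against these two kernel estimates, not by Fourier inversion of the norm bounds. Without (a) and (b), your argument does not establish the stated rates for $P_0I_k$, $J_k$, or any of \eqref{I_kc}--\eqref{J_kd}; so the proposal, as written, has a genuine gap precisely at the step you flagged as the main obstacle.
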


\begin{proof}
Define $H_1(t,x)$ and $H_2(t,x)$ by their fourier transforms:
$$
\hat H_1(t,\xi)=e^{tB_1(\xi)}\chi_1(\xi)P_2,\quad \hat H_2(t,\xi)=e^{tB_2(\xi)}\chi_1(\xi)P_3.
$$
To estimate $\hat{H}_1(t,\xi)$, we consider the eigenvalue problem of $B_1(\xi)$ as follows
\be \(-iP_2(v\cdot\xi)P_2-i\frac{v\cdot\xi}{|\xi|^2}P_0+LP_2\)\psi=\lambda \psi,\quad \xi\in \R^3.\label{r5}\ee

By taking inner product between \eqref{r5} and $\{\sqrt{M},v\sqrt{M}\}$, we obtain
\bmas
&\lambda \hat{n}=-i(\hat{m}\cdot\xi), \\
&\lambda \hat{m}=-i\(\xi+\frac{\xi}{|\xi|^2}\)  \hat{n}-\hat{m},
\emas
where $\hat{n}=(\psi,\sqrt M)$ and $\hat{m}=(\psi,v\sqrt M).$

It is easy to verify that $ \lambda_{j}(|\xi|)$ and $\psi_{j}(\xi)$, $j=0,1,2,3$ are the eigenvalues and eigenfunctions of $B_1(\xi)$ for $\xi\in \R^3$ satisfying
\be \label{G_1c}
\left\{\bln
\lambda_{0}(|\xi|)&= -\frac12- \frac{i}{2}\sqrt{4|\xi|^2+3},\\
\lambda_{1}(|\xi|)&= -\frac12+ \frac{i}{2}\sqrt{4|\xi|^2+3},\\
\lambda_{j}(|\xi|)&=-1,\,\,\, j=2,3,  \\
\psi_{k}(\xi)&=|\xi|a_{k}(|\xi|) \sqrt{M}+b_{k}(|\xi|)|\xi|^{-1}(v\cdot\xi)\sqrt{M},\,\,\,  k=0,1,\\
\psi_j(\xi)&=(v\cdot Y^j)\sqrt{M} ,\,\,\, j=2,3,\\
\big(\psi_j(\xi)&,\overline{\psi_k(\xi)}\big)_\xi=\delta_{jk},\,\,\, j,k=0,1,2,3,
\eln\right.
\ee
where $a_k(s),b_k(s)$ are analytic functions of $s$ satisfying $a_k=-ib_k/\lambda_k$ and $b_k^2=\lambda_k^2/(\lambda^2_k-s^2-1)$ for $k=0,1$, and $Y^j$ are orthonormal vectors satisfying $Y^j\cdot \xi=0$, $j=2,3$.

By \eqref{r5} and \eqref{G_1c}, we obtain
\be
\hat H_1(t,\xi)=\sum^3_{j=0}e^{\lambda_j(|\xi|)t}\psi_j(\xi)\otimes \(\langle \psi_j(\xi)|+\frac1{|\xi|^2}\langle P_0\psi_j(\xi)|\)\chi_1(\xi) . \label{G_1b}
\ee
Here for any $f,g\in L^2(\R^3_v)$, the operator $f\otimes\langle g|$ on $L^2(\R^3_v)$ is defined by \cite{Liu1,Liu2}
$$ f\otimes\langle g|h=(h,\overline{g})f,\quad h\in L^2(\R^3_v). $$
It follows from \eqref{G_1b} and \eqref{G_1c} that
\bmas
 P_0\hat{H}_1=&\sum_{j=0,1}e^{\lambda_j(|\xi|)t}\chi_1(\xi)a_j(|\xi|)\sqrt{M}\otimes \langle (|\xi|^2+1)a_j(|\xi|)\sqrt{M}+b_j(|\xi|)(v\cdot\xi)\sqrt{M}|,\\
P_m\hat{H}_1=&\sum_{j=0,1}e^{\lambda_j(|\xi|)t}\chi_1(\xi)b_j(|\xi|)\frac{v\cdot\xi}{|\xi|^2}\sqrt{M}\otimes \langle (|\xi|^2+1)a_j(|\xi|)\sqrt{M}+b_j(|\xi|)(v\cdot\xi)\sqrt{M}|\\
&+\chi_1(\xi)e^{-t}\sum^3_{k=1}v_k\sqrt{M}\otimes \bigg\langle v_k\sqrt{M}-\frac{(v\cdot\xi)\xi_k}{|\xi|^2}\bigg|,
\emas
which lead to
\be\label{G_1d}
\left\{\bln
\|\dxa P_0H_1(t,x)\|&\le Ce^{-\frac{1}{3}t} (1+|x|^2)^{-k},\\
\|\dxa P_mH_1(t,x)\|&\le Ce^{-\frac{1}{3}t}(1+|x|^2)^{-1-\frac{|\alpha|}{2}},
\\
\|\dxa P_0H_1(t,x)P_1\|&\le Ce^{-\frac{1}{3}t}(1+|x|^2)^{-k},\\
\|\dxa P_mH_1(t,x)P_1\|&\le Ce^{-\frac{1}{3}t}(1+|x|^2)^{-\frac{3}{2}-\frac{|\alpha|}{2}},
\eln\right.
\ee
where $k\ge 0$ is any integer, and $C>0$ is a constant dependent of $k$ and $\alpha$.

By \eqref{norm2}--\eqref{L_4},  the operator
$$
B_2(\xi_1+i\xi_0)=LP_3-iP_3(v\cdot\xi_1)P_3+P_3(v\cdot\xi_0)P_3,\quad (\xi_1,\xi_0)\in \R^3\times \R^3
$$
is dissipate for $ \xi_1\in \R^3$ and $|\xi_0|\le 2\mu$, namely,
 $$-{\rm Re}(B_2(\xi_1+i\xi_0)f,f)\ge  ( \mu-\frac12|\xi_0|)\|f\|^2_{\sigma},\quad \forall f\in N^\bot_1.$$
 It follows that $B_2(\xi)$ with $\xi=\xi_1+i\xi_0\in \mathbb{C}^3 $ generates a contraction semigroup $e^{tB_2(\xi)} $ in $N_1^\bot$ for  $ \xi_1\in \R^3$ and $|\xi_0|\le 2\mu$, which satisfies
\be \|e^{tB_2(\xi)}f\|\le e^{-(1- \frac{|\xi_0|}{2\mu})t}\|f\|, \quad \forall\ t>0,\,\,f\in N^\bot_1, \ee
where we have used
$$-(Lf,f)\ge (1-\frac{|\xi_0|}{2\mu})\|f\|^2+\frac{|\xi_0|}{2}\|f\|^2_{\sigma},\quad \forall f\in N^\bot_1.$$
Moreover, the semigroup
$e^{tB_2(\xi)} $ is analytic in $\{\xi\in \mathbb{C}^3\,|\, |\textrm{Im}\xi|\le 2\mu\}$. Rewrite
\be  e^{tB_2(\xi)} \chi_1(\xi)= e^{tB_2(\xi)}\frac1{(1+|\xi|^2)^2} (1+|\xi|^2)^2\chi_1(\xi). \label{G_7}\ee
By Cauchy theorem, it holds for any $g_0\in N^\bot_1$ and $|\xi_0|\le \mu$ that
\bma
&\left\|\intr e^{i x\cdot\xi}e^{tB_2(\xi)}g_0\frac1{(1+|\xi|^2)^2} d\xi\right\|\nnm\\
=&\left\|\intr e^{ix\cdot(\xi+i\xi_0)}e^{[L-iP_3(v\cdot\xi)P_3+P_3(v\cdot\xi_0)P_3]t}g_0\frac1{(1+|\xi+i\xi_0|^2)^2} d(\xi+i\xi_0)\right\|\nnm\\
\le& e^{-x\cdot\xi_0}\intr \left\|e^{[L-iP_3(v\cdot\xi)P_3+P_3(v\cdot\xi_0)P_3]t}g_0\frac1{(1+|\xi+i\xi_0|^2)^2}\right\|d\xi\nnm\\
\le& e^{-|\xi_0| |x|}e^{-\frac12 t}\|g_0\|. \label{G_6}
\ema
Since $(1+|\xi|^2)^2\chi_1(\xi)$ is a smooth and compact supported function, we can prove that for any $k>0$, there exists $C>0$ such that
\be
\left|\intr e^{i x\cdot\xi}(1+|\xi|^2)^2\chi_1(\xi) d\xi\right|\le C(1+|x|^2)^{-k}.\label{G_5}
\ee
Thus, it follows from \eqref{G_7}--\eqref{G_5} that
\bma
\|\dxa H_2(t,x)\|&\le C\intr e^{- \frac12 t}e^{-\frac34\mu|x-y|}(1+|y|^2)^{-k}dy \nnm\\
&\le C e^{-\frac12 t}(1+|x|^2)^{-k},\quad \forall k\in \N. \label{G_2d}
\ema
Combining  \eqref{I_0a}--\eqref{J_ka}, \eqref{G_1d} and \eqref{G_2d}, we can prove \eqref{I_kb}--\eqref{J_kd}.

Next, we prove  \eqref{I_kd} and \eqref{J_ke} as follows. For any $g_0\in L^2(\R^3_v)$, define
$$\hat{\mathrm{I}}_k(t,\xi)=\hat{I}_k(t,\xi)g_0,\quad \hat{\mathrm{J}}_k(t,\xi)=\hat{J}_k(t,\xi)g_0.$$
By \eqref{G_1c}, \eqref{G_1b} and
$$
{\rm Re}(B_2(\xi)f,f) \le -(f,f),\quad \forall f\in N^\bot_0,\,\, \xi\in \R^3,
$$
we have
\bma
\|e^{tB_1(\xi)}f_0\|_\xi&\le Ce^{-\frac12t}\|f_0\|_\xi,  \quad \forall f_0\in N_1,\label{G_3a}\\
\|e^{tB_2(\xi)}f_1\|&\le e^{- t}\|f_1\|,  \quad \forall f_1\in N^\bot_1. \label{G_3}
\ema

Thus, it follows from \eqref{G_3a}, \eqref{G_3} and \eqref{I_0a}--\eqref{J_ka} that
\bmas
 \|\hat{\mathrm{I}}_0\|_{\xi}
&\le Ce^{-\frac12t}\| P_2g_0\|_\xi\le C |\xi|^{-1}e^{-\frac12t}\|g_0\|,
\\
 \|\hat{\mathrm{J}}_0\| &\le e^{- t} \| P_3g_0\|+\intt e^{-(t-s)}|\xi|\|\hat{\mathrm{I}}_0\|ds \le C e^{-\frac12 t}\|g_0\|,
\emas
and
\bmas
\|\hat{\mathrm{I}}_k\|_{\xi}
\le&  C\intt e^{-\frac12(t-s)}|\xi|\|\hat{\mathrm{J}}_{k-1}\|ds
\le C t^k|\xi|^{2k-1}e^{-\frac12t}\|g_0\|,
\\
 \|\hat{\mathrm{J}}_k\|
 \le& C\intt e^{ -(t-s)}|\xi|\|\hat{\mathrm{I}}_{k}\|ds \le Ct^k|\xi|^{2k}e^{-\frac12t}\|g_0\|,\quad k\ge 1.
\emas

Taking the derivative $\pt^\alpha_\xi$ to \eqref{I_0} with $|\alpha|\ge 1$, we have
\be
\dt\pt^\alpha_\xi \hat{\mathrm{I}}_0+B_1(\xi)\pt^\alpha_\xi \hat{\mathrm{I}}_0 = G_{0,\alpha}, \label{I_2b}
\ee
where
$$
G_{0,\alpha}=\sum_{\beta\le \alpha,|\beta|\ge 1} C^\beta_\alpha \(-iP_2\pt^\beta_\xi(v\cdot\xi)\pt^{\alpha-\beta}_\xi\hat{\mathrm{I}}_0+i\pt^\beta_\xi\(\frac{v\cdot\xi}{|\xi|^2}\)P_0\pt^{\alpha-\beta}_\xi \hat{\mathrm{I}}_0\).
$$
By \eqref{I_2b} and \eqref{G_3a}, we have
\bmas
 \|\pt^\alpha_\xi\hat{\mathrm{I}}_0\|_{\xi}
\le &Ce^{-\frac12 t}|\pt^{\alpha}_\xi\chi_1(\xi)|\|P_2g_0\|_\xi\\
&+C\intt e^{-\frac12(t-s)}\bigg(\frac1{|\xi|}\|\pt^{\alpha-1}_\xi\hat{\mathrm{I}}_{0}\| +\sum^{|\alpha|}_{n= 1}\frac1{|\xi|^{n+1}}|\pt^{\alpha-n}_\xi(\hat{\mathrm{I}}_{0},\sqrt{M})|\bigg)ds\\
\le &C_\alpha \sum^{|\alpha|}_{n=0}t^{n}|\xi|^{-|\alpha|-1} e^{-\frac12 t}\|g_0\| .
\emas

Taking the derivative $\pt^\alpha_\xi$ to \eqref{J_0} with $|\alpha|\ge 1$, we have
\be
\dt\pt^\alpha_\xi \hat{\mathrm{J}}_0+iP_3\pt^\alpha_\xi ( v\cdot\xi \hat{\mathrm{J}}_0)-L\pt^\alpha_\xi \hat{\mathrm{J}}_0  = i P_3\pt^\alpha_\xi(v\cdot\xi\hat{\mathrm{I}}_0). \label{I_1}
\ee
Taking the inner product between \eqref{I_1} and $\pt^\alpha_\xi \hat{\mathrm{J}}_0$ and using Cauchy inequality, we obtain
\bmas
&\Dt \|\pt^\alpha_\xi\hat{\mathrm{J}}_0\|^2+ \frac32\|\pt^\alpha_\xi\hat{\mathrm{J}}_0\|^2+\frac{\mu}{4} \| \pt^\alpha_\xi \hat{\mathrm{J}}_0\|^2_{\sigma}\\
\le& C|\alpha|^2  \|\pt^{\alpha-1}_\xi \hat{\mathrm{J}}_0\|^2+C|\xi|^2\| \pt^\alpha_\xi \hat{\mathrm{I}}_0\|^2+C|\alpha|^2  \| \pt^{\alpha-1}_\xi\hat{\mathrm{I}}_0\|^2,
\emas
which leads to
\bmas
\|\pt^\alpha_\xi\hat{\mathrm{J}}_0\|^2\le &Ce^{-\frac32 t}|\pt^{\alpha}_\xi\chi_1(\xi)|^2\|P_3g_0\|^2\\
&+C\intt e^{-\frac32(t-s)}\(|\alpha|^2  \|\pt^{\alpha-1}_\xi \hat{\mathrm{J}}_0\|^2 +|\xi|^2\| \pt^\alpha_\xi \hat{\mathrm{I}}_0\|^2+|\alpha|^2  \| \pt^{\alpha-1}_\xi\hat{\mathrm{I}}_0\|^2\)ds\\
\le& C_\alpha \sum^{|\alpha|}_{n=0}t^{2n}|\xi|^{-2|\alpha|} e^{- t}\|g_0\|^2.
\emas
We take the derivative $\pt^\alpha_\xi$ to \eqref{I_k} with $|\alpha|\ge 1$ to get
\be
\dt\pt^\alpha_\xi \hat{\mathrm{I}}_k+B_1(\xi)\pt^\alpha_\xi \hat{\mathrm{I}}_k = G_{k,\alpha}, \label{I_2c}
\ee
where
\bmas
G_{k,\alpha}=&\sum_{\beta\le \alpha,|\beta|\ge 1} C^\beta_\alpha \(iP_2\pt^\beta_\xi(v\cdot\xi)\pt^{\alpha-\beta}_\xi\hat{\mathrm{I}}_k+i\pt^\beta_\xi\(\frac{v\cdot\xi}{|\xi|^2}\)P_0\pt^{\alpha-\beta}_\xi \hat{\mathrm{I}}_k\)\\
&+\sum_{\beta\le \alpha} C^\beta_\alpha iP_2\pt^\beta_\xi(v\cdot\xi )\pt^{\alpha-\beta}_\xi \hat{\mathrm{J}}_{k-1}.
\emas
It follows from \eqref{I_2c} and \eqref{G_3a} that
\bmas
 \|\pt^\alpha_\xi\hat{\mathrm{I}}_k\|_{\xi}
\le &C\intt e^{-\frac12(t-s)}\bigg(|\xi|\|\pt^\alpha_\xi\hat{\mathrm{J}}_{k-1}\|+\|\pt^{\alpha-1}_\xi\hat{\mathrm{J}}_{k-1}\|\\
&\qquad+\frac1{|\xi|}\|\pt^{\alpha-1}_\xi\hat{\mathrm{I}}_{k}\|  +\sum^{|\alpha|}_{n= 1}\frac1{|\xi|^{n+1}}|\pt^{\alpha-n}_\xi(\hat{\mathrm{I}}_{k},\sqrt{M})|\bigg)ds\\
\le &C_{\alpha}\sum^{|\alpha|}_{n=0}t^{k+n}|\xi|^{2k-|\alpha|-1} e^{-\frac12 t}\|g_0\|.
\emas

By taking the derivative $\pt^\alpha_\xi$ to \eqref{J_k} with $|\alpha|\ge 1$, we have
\be
\dt\pt^\alpha_\xi \hat{\mathrm{J}}_k+iP_1\pt^\alpha_\xi ( v\cdot\xi \hat{\mathrm{J}}_k)-L\pt^\alpha_\xi \hat{\mathrm{J}}_k  = i\pt^\alpha_\xi P_0(v\cdot\xi \hat{\mathrm{I}}_{k}). \label{I_3}
\ee
Taking the inner product between \eqref{I_3} and $\pt^\alpha_\xi \hat{\mathrm{J}}_k$ and using Cauchy inequality, we obtain
\bmas
& \Dt \|\pt^\alpha_\xi\hat{\mathrm{J}}_k\|^2+ \frac32\|\pt^\alpha_\xi\hat{\mathrm{J}}_k\|^2+\frac{\mu}{4} \| \pt^\alpha_\xi \hat{\mathrm{J}}_k\|^2_{\sigma}\\
\le&  C |\xi|^2\|\pt^\alpha_\xi  \hat{\mathrm{I}}_{k}\|^2 +C|\alpha|^2\|\pt^{\alpha-1}_\xi  \hat{\mathrm{I}}_{k}\|^2 +C |\alpha|^2 \|\pt^{\alpha-1}_\xi \hat{\mathrm{J}}_k\|^2,
\emas
which leads to
\bmas
\|\pt^\alpha_\xi\hat{\mathrm{J}}_k\|^2\le &C\intt e^{-\frac32(t-s)}\(|\alpha|^2  \|\pt^{\alpha-1}_\xi \hat{\mathrm{J}}_k\|^2 +|\xi|^2\| \pt^\alpha_\xi \hat{\mathrm{I}}_k\|^2+ |\alpha|^2 \| \pt^{\alpha-1}_\xi\hat{\mathrm{I}}_k\|^2\)ds\\
\le &C_{\alpha}\sum^{|\alpha|}_{n=0}t^{2(k+n)}|\xi|^{4k-2|\alpha|}e^{-t} \|g_0\|^2.
\emas
The proof is completed.
\end{proof}

With the help of Lemma \ref{ij}, we have the following the pointwise behaviors for the remaining terms $V_k(t,x)$ and $\Tdx Z_k(t,x)$ defined by \eqref{w-r}.

\begin{lem}\label{lr1} For any $k\ge 1$  and $\alpha\in \mathbb{N}^3$, there exists a small constant $\delta_0>0$ such that for any $\delta\in (0,\delta_0)$,
\be
\|\dxa V_k(t,x)\|+|\dxa \Tdx Z_k(t,x)|\le Ce^{2\delta t}(1+|\delta x|^2)^{-(k+\frac{3+|\alpha|}2)}, \label{Vk}
\ee
where $C>0$ is a constant dependent of $k$ and $\alpha$.
\end{lem}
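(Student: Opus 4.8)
\emph{Setup and base frequency estimate.} The plan is to prove the estimate first on the Fourier side in the weighted space $L^2_\xi(\R^3_v)$ and then to pass to the pointwise bound in $x$. Fix $g_0\in L^2(\R^3_v)$ and write $\hat{\mathrm V}_k=\hat V_kg_0$, $\hat{\mathrm Z}_k=\hat Z_kg_0$, $\hat{\mathrm J}_k=\hat J_kg_0$ (the latter as in the proof of Lemma~\ref{ij}); by \eqref{r4} they solve $\dt\hat{\mathrm V}_k+i(v\cdot\xi)\hat{\mathrm V}_k-L\hat{\mathrm V}_k-i(v\cdot\xi)\sqrt M\,\hat{\mathrm Z}_k=-iP_2(v\cdot\xi\,\hat{\mathrm J}_k)$ with $-|\xi|^2\hat{\mathrm Z}_k=(\hat{\mathrm V}_k,\sqrt M)$ and $\hat{\mathrm V}_k(0,\xi)=0$. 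I would test this equation against $\hat{\mathrm V}_k$ in $(\cdot,\cdot)_\xi$. The decisive point — precisely why the weight $\tfrac1{|\xi|^2}(P_0\cdot,P_0\cdot)$ was built into $\|\cdot\|_\xi$ — is that the contributions of the streaming term $i(v\cdot\xi)\hat{\mathrm V}_k$ and of the self-consistent force $-i(v\cdot\xi)\sqrt M\,\hat{\mathrm Z}_k$ to $\tfrac12\Dt\|\hat{\mathrm V}_k\|_\xi^2$ cancel exactly, using $-|\xi|^2\hat{\mathrm Z}_k=(\hat{\mathrm V}_k,\sqrt M)$ and $P_0((v\cdot\xi)\sqrt M)=0$. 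Next, $(L\hat{\mathrm V}_k,\hat{\mathrm V}_k)_\xi=(L\hat{\mathrm V}_k,\hat{\mathrm V}_k)\le-\mu\|P_1\hat{\mathrm V}_k\|_\sigma^2$ by \eqref{L_4} (the weighted part drops out since $P_0L\hat{\mathrm V}_k=0$), and since $\hat J_k\in N_1^\bot$ forces $(\hat{\mathrm J}_k,v\sqrt M)=0$, the source reduces to $P_m(v\cdot\xi\hat{\mathrm J}_k)\in N_m$, which has no $P_0$-part and is bounded in $\|\cdot\|_\xi$ by $C|\xi|\|\hat{\mathrm J}_k\|$. Inserting $\|\hat{\mathrm J}_k(t,\xi)\|\le Ct^k|\xi|^{2k}e^{-t/2}\|g_0\|$ from Lemma~\ref{ij} and Cauchy's inequality gives $\Dt\|\hat{\mathrm V}_k\|_\xi^2+c\|P_1\hat{\mathrm V}_k\|_\sigma^2\le Ct^{2k}|\xi|^{4k+2}e^{-t}\|g_0\|^2$. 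Only microscopic dissipation appears, so I would then recover the macroscopic moments $(\hat{\mathrm V}_k,\sqrt M),(\hat{\mathrm V}_k,v\sqrt M)$ — either by adding a Kawashima-type corrector (for $|\xi|\le 2R$ it is the Poisson coupling that supplies the coercivity of the density, as in \cite{Li3}) or through the uniform semigroup decay of Lemma~\ref{eigen_3} in Duhamel's formula — to reach $\|\hat{\mathrm V}_k(t,\xi)\|_\xi\le Ct^k|\xi|^{2k+1}e^{-ct}\|g_0\|$.

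\emph{Frequency derivatives.} I would proceed by induction on $|\beta|$. Differentiating, $\partial^\beta_\xi\hat{\mathrm V}_k$ solves the same equation with the extra sources $\partial^{\beta'}_\xi(v\cdot\xi)\,\partial^{\beta-\beta'}_\xi\hat{\mathrm V}_k$ and $\partial^{\beta'}_\xi\!\big(\tfrac{v\cdot\xi}{|\xi|^2}\big)P_0\,\partial^{\beta-\beta'}_\xi\hat{\mathrm V}_k$ with $|\beta'|\ge1$, plus $\partial^\beta_\xi$ of the original source. The second term is the delicate one, since $\partial^{\beta'}_\xi(\xi|\xi|^{-2})\sim|\xi|^{-1-|\beta'|}$ grows ever more singular at $\xi=0$; but it carries the moment $P_0\partial^{\beta-\beta'}_\xi\hat{\mathrm V}_k$, which by the induction hypothesis is one $|\xi|$-power smaller than the full $\|\cdot\|_\xi$-norm, and it again lies in $N_m$, so it contributes only at order $|\xi|^{2k-|\beta|}$. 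Rerunning the same weighted energy estimate and using Lemma~\ref{ij} for the frequency derivatives of $\hat{\mathrm J}_k$ yields, for every $\delta\in(0,\delta_0)$,
\[
\|\partial^\beta_\xi\hat{\mathrm V}_k(t,\xi)g_0\|_\xi\le C\delta^{-2|\beta|}e^{2\delta t}|\xi|^{2k-|\beta|}\|g_0\| ,
\]
the polynomial-in-$t$ factors and accumulated constants being absorbed crudely into $e^{2\delta t}$ and $\delta^{-2|\beta|}$ (in fact genuine exponential decay with $\delta$-independent constants is available if one works through Lemma~\ref{eigen_3}). Since $\widehat{\Tdx Z}_k=-i\xi|\xi|^{-2}(\hat V_k,\sqrt M)$ and $|\partial^\gamma_\xi(\hat V_k,\sqrt M)|=\|P_0\partial^\gamma_\xi\hat V_k\|\le|\xi|\,\|\partial^\gamma_\xi\hat V_k\|_\xi$, the same bound holds for $\partial^\beta_\xi\widehat{\Tdx Z}_k$.

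\emph{From frequency to pointwise.} Recall $\hat{\mathrm V}_k(t,\cdot)$ is supported in $\{|\xi|\le 2R\}$. For $|x|\le 1$ the crude estimate $\|\dxa\mathrm V_k(t,x)\|\le C\!\int_{|\xi|\le 2R}|\xi|^{|\alpha|}\|\hat{\mathrm V}_k\|\,d\xi\le Ce^{2\delta t}$ already gives the claim. For $|x|\ge 1$ I would split
\[
\dxa\mathrm V_k(t,x)=\frac1{(2\pi)^{3/2}}\bigg(\int_{|\xi|\le1/|x|}+\int_{1/|x|<|\xi|\le2R}\bigg)e^{ix\cdot\xi}(i\xi)^\alpha\hat{\mathrm V}_k(t,\xi)\,d\xi .
\]
On the inner ball, $\|(i\xi)^\alpha\hat{\mathrm V}_k\|\lesssim e^{2\delta t}|\xi|^{2k+|\alpha|}$ gives a contribution $\lesssim e^{2\delta t}|x|^{-(2k+|\alpha|+3)}$; on the annulus, integrating by parts in $\xi$ a fixed number $M\ge 2k+|\alpha|+4$ of times — no boundary term at $|\xi|=2R$ by the cut-off, the one at $|\xi|=1/|x|$ of the same size, and the radial integral dominated by its inner endpoint — and invoking the frequency bounds for $\partial^\beta_\xi((i\xi)^\alpha\hat{\mathrm V}_k)$, one again gets a contribution of order $\delta^{-c(k,\alpha)}e^{2\delta t}|x|^{-(2k+|\alpha|+3)}$. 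Since $2k+|\alpha|+3=2\big(k+\tfrac{3+|\alpha|}2\big)$, writing $|x|^{-(2k+|\alpha|+3)}=(\delta|x|)^{-(2k+|\alpha|+3)}\,\delta^{\,2k+|\alpha|+3}$ and comparing the two regimes $|\delta x|\le1$, $|\delta x|\ge1$ absorbs the powers of $\delta$ and yields $\|\dxa V_k(t,x)\|\le Ce^{2\delta t}(1+|\delta x|^2)^{-(k+\frac{3+|\alpha|}2)}$ with $C=C(k,\alpha)$; the bound for $\dxa\Tdx Z_k$ follows in the same way from the corresponding frequency estimate.

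\emph{Main obstacle.} The heart of the matter is the frequency estimate. The dissipation of $L$ only sees $P_1\hat{\mathrm V}_k$, so closing the bound — for $\hat{\mathrm V}_k$ and for all of its frequency derivatives — hinges on the hypocoercive/Kawashima mechanism special to the low-frequency VPFP operator, in which it is the self-consistent Poisson force rather than the collision operator that restores the coercivity of the macroscopic density, intertwined with the bookkeeping that tames the increasingly singular commutators $\partial^{\beta'}_\xi(v\cdot\xi/|\xi|^2)$ via the extra $|\xi|$-decay of the $P_0$-moments and the high-order vanishing $|\xi|^{2k}$ of the source inherited from Lemma~\ref{ij}. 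Extracting the sharp spatial rate with the half-integer exponent $k+\tfrac{3+|\alpha|}2$ then forces the split-and-integrate-by-parts argument rather than the direct correspondence ``$2N$ frequency derivatives $\leftrightarrow |x|^{2N}$'', and one must check that the powers of $\delta$ it produces cancel against those hidden in $(1+|\delta x|^2)^{-(\cdots)}$.
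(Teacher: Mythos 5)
Your frequency--side analysis is essentially the paper's: the same remainder equation \eqref{r4}, the same weighted energy identity with the exact cancellation of the transport and Poisson terms in $(\cdot,\cdot)_\xi$, the observation that the source $P_2(v\cdot\xi\,\hat{\mathrm J}_k)$ has no $\sqrt M$--moment so it pairs only with the microscopic part, Lemma \ref{ij} for $\hat{\mathrm J}_k$ and its $\xi$--derivatives, and induction on the order of $\pt_\xi$, arriving at the same bound \eqref{v2}. Two remarks on that part. First, the detour through a hypocoercivity/Kawashima corrector (or Duhamel with Lemma \ref{eigen_3}) to get $e^{-ct}$ decay of $\hat{\mathrm V}_k$ is unnecessary: the lemma only claims $e^{2\delta t}$ growth, and the time decay of $V_k$ is recovered afterwards, in the proof of Theorem \ref{low3}, by interpolating \eqref{Vk} with the crude bound $\|\hat V_k\|_\xi\le\|\hat G_L-\hat U_k\|_\xi\le C|\xi|^{-1}e^{-\beta_2 t}$ from Lemma \ref{l-1}; your parenthetical claim that genuine exponential decay with $\delta$--independent constants holds for all the derivative estimates is not substantiated (Lemma \ref{eigen_3} gives no control of $\pt_\xi^\beta e^{tB(\xi)}$). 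Second, you misattribute the origin of the factor $e^{2\delta t}$: it is not the absorption of polynomial-in-$t$ factors, but the top-order macroscopic term (the analogue of $\delta|\xi|^{-2}|\pt^\alpha_\xi\hat n_k|^2$ in \eqref{n6}) coming from the weighted pairing of the commutator with $P_0\pt^\alpha_\xi\hat{\mathrm V}_k$, which the microscopic dissipation in \eqref{L_4} cannot absorb and which must be handled by a $\delta$--Cauchy inequality plus Gronwall; this is also where each derivative costs a power of $\delta^{-1}$.

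Where you genuinely diverge from the paper is the passage from frequency to $x$, and there your bookkeeping does not close as stated. The paper multiplies by $x^{2\alpha}$ and bounds $x^{2\alpha}\pt^\gamma_x V_k$ by the $L^1_\xi$--norm of $\pt^{2\alpha}_\xi(\xi^\gamma\hat{\mathrm V}_k)$, taking exactly $2|\alpha|\le 2k+|\gamma|+3$ derivatives (the largest number for which $|\xi|^{2k-2|\alpha|+|\gamma|+1}$ is still integrable on $|\xi|\le 2R$); with $\delta^{-1}$ per derivative this produces precisely $(\delta|x|)^{-2|\alpha|}$, so the powers of $\delta$ match and $C=C(k,\alpha)$. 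You instead split at $|\xi|\sim 1/|x|$ and integrate by parts $M\ge 2k+|\alpha|+4$ times on the annulus. The spatial rate $|x|^{-(2k+|\alpha|+3)}$ indeed comes out, but each integration by parts costs at least $\delta^{-1}$ (or $\delta^{-2}$ by the bound you quote), so the prefactor is $\delta^{-M}$ with $M>2k+|\alpha|+3$, and writing $|x|^{-(2k+|\alpha|+3)}=\delta^{2k+|\alpha|+3}(\delta|x|)^{-(2k+|\alpha|+3)}$ leaves an uncancelled negative power of $\delta$; your final assertion that the $\delta$--powers are absorbed and $C$ depends only on $(k,\alpha)$ is therefore not correct as written. (Choosing $M=2k+|\alpha|+3$ instead produces a logarithm from the radial integral, which is the reason you took $M$ larger.) The defect is harmless for the application, since $\delta$ is ultimately fixed in Theorem \ref{low3}, but to prove the lemma in the stated $\delta$--uniform form you should either switch to the paper's direct $x^{2\alpha}$--moment argument, or redo your split keeping the number of $\xi$--derivatives at most $2k+|\alpha|+3$ and track the resulting logarithm separately.
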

\begin{proof}For any $g_0\in L^2(\R^3_v)$, define
$$\hat{\mathrm{V}}_k(t,\xi)=\hat{V}_k(t,\xi)g_0,\quad \hat{\mathrm{Z}}_k(t,\xi)=\hat{Z}_k(t,\xi)g_0.$$
We claim that for any $k\ge 1$ and $\alpha\in \mathbb{N}^3$,
\be \|\pt^\alpha_\xi \hat{\mathrm{V}}_k(t,\xi)\|_\xi\le C\delta^{-2|\alpha|}e^{2\delta t}|\xi|^{2k-|\alpha|}\|g_0\|. \label{v2}\ee

We prove \eqref{v2} by induction. Taking the inner product $(\cdot,\cdot)_{\xi}$ between $\hat{\mathrm{V}}_k $ and \eqref{r4} and choosing the real part, we have
\be
\frac12\Dt \|\hat{\mathrm{V}}_k\|^2_{\xi}-(L\hat{\mathrm{V}}_k,\hat{\mathrm{V}}_k)
={\rm Re}(iP_2(v\cdot\xi)\hat{\mathrm{J}}_k,\hat{\mathrm{V}}_k)_{\xi}. \label{n2}
\ee
Since
$(P_2(v\cdot\xi)\hat{\mathrm{J}}_k,\sqrt{M})=0,$
it follows from \eqref{n2} that
$$
\frac12\Dt \|\hat{\mathrm{V}}_k\|^2_{\xi}  +\frac{\mu}{2}\| P_1\hat{\mathrm{V}}_k\|^2_{\sigma}
\le  C|\xi|^2\|\hat{\mathrm{J}}_k\|^2,
$$
which leads to
\be
\|\hat{\mathrm{V}}_k\|^2_{\xi}\le C |\xi|^{4k+2}\|g_0\|^2 \intt s^{2k}e^{-s}ds
\le C |\xi|^{4k+2}\|g_0\|^2. \label{n4}
\ee

Suppose that \eqref{v2} holds for $|\alpha|\le j-1$. Taking the derivative $\pt^\alpha_\xi$ to \eqref{r4} with $|\alpha|=j$, we have
\be
\dt\pt^\alpha_\xi \hat{\mathrm{V}}_k+i v\cdot\xi \pt^\alpha_\xi\hat{\mathrm{V}}_k-L\pt^\alpha_\xi \hat{\mathrm{V}}_k+i\frac{v\cdot\xi}{|\xi|^2}P_0 \pt^\alpha_\xi\hat{\mathrm{V}}_k= G_{k,\alpha}, \label{I_2d}
\ee
where
$$
G_{k,\alpha}=\sum_{\beta\le \alpha,|\beta|\ge 1} C^\beta_\alpha \(i\pt^\beta_\xi(v\cdot\xi)\pt^{\alpha-\beta}_\xi\hat{\mathrm{V}}_k+i\pt^\beta_\xi\(\frac{v\cdot\xi}{|\xi|^2}\)P_0\pt^{\alpha-\beta}_\xi \hat{\mathrm{V}}_k\)+i P_2\pt^\alpha_\xi(v\cdot\xi \hat{\mathrm{J}}_{k}).
$$
Taking the inner product $(\cdot,\cdot)_\xi$ between \eqref{I_2d} and $\pt^\alpha_\xi \hat{\mathrm{V}}_k $ and using Cauchy inequality, we obtain
\bma
&\frac12\Dt \|\pt^\alpha_\xi \hat{\mathrm{V}}_k\|^2_{\xi}  +\frac{\mu}2\| P_1\pt^\alpha_\xi\hat{\mathrm{V}}_k\|^2_{\sigma} \nnm\\
\le&  \frac C{\delta} \(|\xi|^2\| \pt^\alpha_\xi\hat{\mathrm{J}}_{k}\|^2+\|\pt^{\alpha-1}_\xi  \hat{\mathrm{J}}_{k}\|^2\) +\frac C{\delta}  |\alpha|^2 \| P_1\pt^{\alpha-1}_\xi\hat{\mathrm{V}}_k\|^2\nnm\\
&+ \frac{C}{\delta}\bigg(\frac1{|\xi|^2}|\pt^{\alpha-1}_\xi\hat{m}_k|^2 +\sum^{|\alpha|}_{l=1}\frac1{|\xi|^{2l+2}}|\pt^{\alpha-l}_\xi \hat{n}_k|^2\bigg)+\delta\frac{1}{|\xi|^2} |\pt^\alpha_\xi \hat{n}_k|^2,\label{n6}
\ema
where $\delta\in (0,\delta_0)$ with $\delta_0>0$ sufficiently small, and
$$\hat{n}_k=(\hat{\mathrm{V}}_k,\sqrt{M}),\quad \hat{m}_k=(\hat{\mathrm{V}}_k,v\sqrt{M}).$$
Apply Gronwall's inequality to \eqref{n6} and using \eqref{v2}, we have
\bma
\|\pt^\alpha_\xi \hat{\mathrm{V}}_k\|^2_{\xi}\le&\frac{C}{\delta} |\xi|^{4k+2-2|\alpha|}\|g_0\|^2\delta^{2-2|\alpha|} \intt e^{2\delta(t-s)} e^{4\delta s}ds \nnm\\
&+\frac{C}{\delta} |\xi|^{4k+2-2|\alpha|}\|g_0\|^2\sum^{|\alpha|}_{n=0}\intt e^{2\delta(t-s)}s^{2k+2n}e^{- s}ds \nnm\\
\le &C\delta^{-2|\alpha|}|\xi|^{4k+2-2|\alpha|}e^{4\delta t}\|g_0\|^2.\label{n5}
\ema
This proves \eqref{v2} for $|\alpha|=j$.

Therefore, it holds for any $\gamma\in \mathbb{N}^3$,
\bmas
&\|\pt^\alpha_\xi(\xi^\gamma \hat{\mathrm{V}}_k)\|+|\pt^\alpha_\xi(\xi^\gamma\xi \hat{\mathrm{Z}}_k)|\\
\le&\sum_{\beta\le \alpha}C^\beta_\alpha\bigg(\Big\|\pt^\beta_\xi(\xi^\gamma)\pt^{\alpha-\beta}_\xi\hat{\mathrm{V}}_k\Big\| +\Big|\pt^\beta_\xi\(\frac{\xi^\gamma\xi}{|\xi|^2}\)\pt^{\alpha-\beta}_\xi\hat{n}_k\Big|\bigg)\\
\le&C\sum^{|\alpha|}_{|\beta|=0}|\xi|^{|\gamma|-|\beta|}\|\pt^{\alpha-\beta}_\xi\hat{\mathrm{V}}_k\|_\xi
\le C\delta^{-|\alpha|}|\xi|^{2k-|\alpha|+|\gamma|+1}e^{2\delta t}\|g_0\|,
\emas
which leads to
\bmas
&(\|\pt^\gamma_x \mathrm{V}_k(t,x)\|+|\pt^\gamma_x\Tdx \mathrm{Z}_k(t,x)|)x^{2\alpha} \\
\le& C \int_{|\xi|\le 2R} \(\|\pt^{2\alpha}_\xi (\xi^\gamma \hat{\mathrm{V}}_k)\|+|\pt^{2\alpha}_\xi(\xi^\gamma\xi \hat{\mathrm{Z}}_k)|\)d\xi\\
\le& Ce^{2\delta t}\delta^{-2|\alpha|}\|g_0\|\int_{|\xi|\le 2R}|\xi|^{2k-2|\alpha|+|\gamma|+1}d\xi\\
 \le& Ce^{2\delta t}\delta^{-2|\alpha|}\|g_0\|,
\emas for any  $|\alpha|\le k+|\gamma|/2+3/2$.
This proves the lemma.
\end{proof}

With the help of Lemmas \ref{ij} and \ref{lr1}, we can obtain the following pointwise space-time estimates of $G_L(t,x)$ defined by \eqref{L-R}.

\begin{thm}\label{low3}
For any $\alpha\in \mathbb{N}^3$, 
the low frequency part  $G_{L}(t,x)$  satisfies
\bma
\|\dxa P_0G_L(t,x)\|&\le Ce^{-\frac14t}(1+|x|^2)^{-\frac{4+|\alpha|}2} , \label{gl1}\\
\|\dxa P_mG_L(t,x)\|&\le Ce^{-\frac14t}(1+|x|^2)^{-\frac{2+|\alpha|}2} , \label{gl2}\\
\|\dxa P_3G_L(t,x)\|&\le Ce^{-\frac14t}(1+|x|^2)^{-\frac{3+|\alpha|}2} , \label{gl3}
\ema
where $C>0$ is a constant dependent of $\alpha$. Moreover,
\bma
\|\dxa P_0G_L(t,x)P_1\|&\le Ce^{-\frac14t}(1+|x|^2)^{-\frac{5+|\alpha|}2} , \label{gl4}\\
\|\dxa P_mG_L(t,x)P_1\|&\le Ce^{-\frac14t}(1+|x|^2)^{-\frac{3+|\alpha|}2} , \label{gl5}\\
\|\dxa P_3G_L(t,x)P_1\|&\le Ce^{-\frac14t}(1+|x|^2)^{-\frac{4+|\alpha|}2} . \label{gl6}
\ema
\end{thm}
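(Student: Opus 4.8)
The plan is to assemble the pointwise estimates of $G_L(t,x)$ by combining the two complementary regimes that the spectral picture provides: a direct semigroup bound that is good for $|x|$ small, and the Picard-iteration decomposition $G_L=U_k+V_k$ with the sharp estimates of Lemmas \ref{ij} and \ref{lr1} that is good for $|x|$ large. First I would fix the number of iterates $k$ in terms of the decay order I am aiming for: for the $P_0$-row with $|\alpha|$ derivatives I need the spatial weight $(1+|x|^2)^{-(4+|\alpha|)/2}$, for the $P_m$-row $(1+|x|^2)^{-(2+|\alpha|)/2}$, for the $P_3$-row $(1+|x|^2)^{-(3+|\alpha|)/2}$, and for the $P_1$-refined versions two extra powers each; so I would take $k$ large enough that $k+(3+|\alpha|)/2$ dominates all of these (e.g. $k\ge 3+|\alpha|$ suffices).

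Next I would treat the near region $|x|^2\le e^{t/(4N)}$ for a suitable integer $N$. Here I use $\|\dxa G_L(t,x)\| \le C\intr |\xi|^{|\alpha|}\|e^{tB(\xi)}\chi_1(\xi)\|\,d\xi$ together with Lemma \ref{l-1}, which gives $\|\dxa G_L(t,x)\|\le Ce^{-t/2}$ uniformly in $x$; then $e^{-t/2}=e^{-t/4}e^{-t/4}\le e^{-t/4}(1+|x|^2)^{-N}$ on that region, and choosing $N$ at least as large as the largest required weight exponent yields \eqref{gl1}--\eqref{gl6} there. The projection-refined bounds \eqref{gl4}--\eqref{gl6} in the near region need the analogous statement with $P_1$ acting on the right; I would get this from the block structure of $e^{tB(\xi)}$ recorded in the proof of Lemma \ref{ij} (the $P_1$-component of $\hat H_1$ carries an extra factor of $|\xi|$, and $\chi_1$ is compactly supported), so again the $x$-uniform exponential bound holds and the weight is manufactured from the remaining $e^{-t/4}$.

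Then I would treat the far region $|x|^2> e^{t/(4N)}$ using the splitting $G_L=U_k+V_k=\sum_{n\le k}(I_n+J_n)+V_k$. For the $U_k$ part, Lemma \ref{ij} already gives exactly the claimed bounds \eqref{gl1}--\eqref{gl6}: $\|\dxa P_0 I_n\|\le Ce^{-t/4}(1+|x|^2)^{-(4+|\alpha|)/2}$, etc., and summing over $0\le n\le k$ only changes the constant. For the remainder $V_k$, Lemma \ref{lr1} gives $\|\dxa V_k(t,x)\|\le Ce^{2\delta t}(1+|\delta x|^2)^{-(k+(3+|\alpha|)/2)}$ for any $\delta\in(0,\delta_0)$. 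On the far region $(1+|\delta x|^2)^{-1}\le C\delta^{-2}(1+|x|^2)^{-1}\le C\delta^{-2}e^{-t/(4N)}$, so choosing $\delta$ small and then $N$ large (specifically so that $(k+(3+|\alpha|)/2)/(4N)$ exceeds $2\delta$, and the leftover $x$-decay still beats the largest target weight) converts $e^{2\delta t}(1+|\delta x|^2)^{-(k+(3+|\alpha|)/2)}$ into $Ce^{-t/4}(1+|x|^2)^{-(4+|\alpha|)/2}$ and likewise for the other rows; the same argument with the $P_1$ refinements of Lemma \ref{ij} handles \eqref{gl4}--\eqref{gl6}. Patching the near and far regions (the constants match up to harmless factors) completes the proof.

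The main obstacle is the bookkeeping of the three coupled parameters $k$, $\delta$, $N$: one must choose $k$ first to secure enough spatial decay in $V_k$, then $N$ large enough that on the far region the excess factor $(1+|x|^2)^{-1}$ is at least $e^{-t/(4N)}$ with room to spare, and finally $\delta$ small enough that $2\delta<\tfrac14$ with the remaining budget absorbing the $e^{2\delta t}$; one also has to check that the $P_1$-weighted semigroup bound in the near region genuinely has the same $e^{-t/2}$ time decay, which follows from the explicit block form of $\hat H_1,\hat H_2$ but should be stated carefully. Everything else is routine once these choices are pinned down, since Lemmas \ref{l-1}, \ref{ij}, and \ref{lr1} already package the hard analytic work.
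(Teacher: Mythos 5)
Your proposal is correct and is essentially the paper's own argument: the paper also writes $G_L=U_k+V_k$, handles $U_k$ by Lemma \ref{ij}, and controls $V_k$ by combining Lemma \ref{lr1} with an $x$-uniform exponential bound $\|\dxa V_k(t,x)\|\le Ce^{-\beta_2 t}$ deduced from Lemmas \ref{l-1} and \ref{ij}; in place of your near/far splitting in $x$ it uses the equivalent pointwise interpolation of the two bounds (taking the product of their powers $2/3$ and $1/3$), with $k\ge 6+|\alpha|$. The only minor adjustments are that a somewhat larger $k$ than your initial $k\ge 3+|\alpha|$ is needed once part of the spatial decay is spent producing the $e^{-t/4}$ factor (your later bookkeeping already acknowledges this), and that the $x$-uniform bound for the $P_1$-refined estimates follows directly from Lemma \ref{l-1} because $\|P_1g\|_\xi=\|P_1g\|$, with no need to invoke the block structure of $\hat H_1$.
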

\begin{proof}
By \eqref{w-r}, we can decompose $G_L(t,x)$ into
\be
G_L(t,x)=U_k(t,x)+V_k(t,x). \label{decom1}
\ee
By Lemma \ref{ij}, we have
\be
\left\{\bln
\|\dxa P_0U_k(t,x)\|&\le Ce^{-\frac14t}(1+|x|^2)^{-\frac{4+|\alpha|}2} ,\\
\|\dxa P_mU_k(t,x)\|&\le Ce^{-\frac14t}(1+|x|^2)^{-\frac{2+|\alpha|}2} , \\
\|\dxa P_3U_k(t,x)\|&\le Ce^{-\frac14t}(1+|x|^2)^{-\frac{3+|\alpha|}2} ,
\eln\right.
\ee
and
\be
\left\{\bln
\|\dxa P_0U_k(t,x)P_1\|&\le Ce^{-\frac14t}(1+|x|^2)^{-\frac{5+|\alpha|}2} ,\\
\|\dxa P_mU_k(t,x)P_1\|&\le Ce^{-\frac14t}(1+|x|^2)^{-\frac{3+|\alpha|}2} ,\\
\|\dxa P_3U_k(t,x)P_1\|&\le Ce^{-\frac14t}(1+|x|^2)^{-\frac{4+|\alpha|}2}.
\eln\right.
\ee
By \eqref{decom1}, Lemmas \ref{l-1} and \ref{ij}, we obtain
$$\|\hat{V}_k(t,\xi)\|_\xi\le  \|(\hat{G}_L -\hat{U}_k)(t,\xi)\|_\xi\le C|\xi|^{-1}e^{-\beta_2t},\quad |\xi|\le 2R,$$
where $\beta_2\in (0,1/2)$ is a constant, which leads to
$$
\|\dxa V_k(t,x)\|+|\dxa \Tdx Z_k(t,x)|\le Ce^{-\beta_2 t}.
$$
This together with \eqref{Vk} imply that
\bma
\|\dxa V_k(t,x)\|+|\dxa \Tdx Z_k(t,x)|&\le Ce^{-\frac{2\beta_2}{3} t+\frac{2\delta}3t}(1+|x|^2)^{-\frac{k}{3}-\frac{3+|\alpha|}6}\nnm\\
&\le Ce^{-\frac14 t}(1+|x|^2)^{-\frac{5+|\alpha|}2}, \label{Vk1}
\ema
for  $k\ge 6+|\alpha|$. Combining \eqref{decom1}--\eqref{Vk1}, we prove the theorem.
\end{proof}

\subsection{High frequency part }

In this subsection, we extract the singular part for the high frequency part $G_H$ and establish the pointwise estimate of the remaining part.
Since $\hat{G}_H $ does not belong to $L^1(\R^3_\xi)$,   $G_H$ can be decompose into the singular part and the remaining smooth part. Indeed, we apply a refined Picard's iteration as  \cite{Lin1} to construct the  approximate sequences of $\hat{G}_H $, and estimate the smooth remaining term by the energy estimate.
Note that $\hat{G}_H(t,\xi)$ satisfies
\bgrs
\dt \hat{G}_H+i(v\cdot \xi) \hat{G}_H-L\hat{G}_H+i(v\cdot\xi)|\xi|^{-2}P_0\hat{G}_H=0,\\
\hat{G}_H(0,\xi)=\chi_2(\xi)I_v.
\egrs

Set
\be A(\xi)=L-2-i(v\cdot\xi). \label{A2}\ee

\begin{lem} \label{mix1a}
For any $k\ge 0$ and $g_0\in L^2(\R^3_v)$, we have
\be
\|\Tdv e^{tA(\xi)}g_0\|^2\le C(1+t^{-1})e^{-4t}\|g_0\|^2, \label{mix2}
\ee
and
\be
\||\xi|^k e^{tA(\xi)}g_0\|^2\le C(1+t^{-3k})e^{-4t}\|g_0\|^2, \label{mix3}
\ee
for $C>0$ a positive constant.
\end{lem}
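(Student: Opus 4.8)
The plan is to peel off the scalar damping in $A(\xi)$ and then exploit the explicit Kolmogorov--Mehler form of the Green's function of $A$ (equivalently, the hypoelliptic smoothing of the kinetic Fokker--Planck flow). Since $A(\xi)=(L-i(v\cdot\xi))-2$, write $e^{tA(\xi)}g_0=e^{-2t}h(t)$ where $h$ solves $\dt h=Lh-i(v\cdot\xi)h$, $h(0)=g_0$; pairing with $h$, using that multiplication by $i(v\cdot\xi)$ is skew-adjoint and $(Lh,h)\le0$, gives $\Dt\|h\|^2\le0$, so $\|h(t)\|\le\|g_0\|$ and $\|e^{tA(\xi)}g_0\|^2\le e^{-4t}\|g_0\|^2$ (the case $k=0$ of \eqref{mix3}). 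It remains to control $\|\Tdv h(t)\|$ and $\||\xi|^k h(t)\|$ uniformly in $\xi$.

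The main step is to take the Fourier transform in $x$ of the explicit formula for $G_1$ (as in the Remark after Theorem \ref{green1}): the kernel $K_t(v,u;\xi)$ of $e^{tA(\xi)}$ equals $e^{-2t}$ times an explicit Gaussian in $v-u$ of width $\sim\sqrt t$ multiplied by the phase--damping factor $e^{-ct^3|\xi|^2-i(v+u)t\cdot\xi/2}$ at short times, with the obvious confining Mehler-type modification (Gaussian in $v$ and in $u$ separately, factor $e^{-ct|\xi|^2}$) at large times. From $\int|K_t(v,u;\xi)|\,du\le Ce^{-2t}$ (and the same with the roles of $v,u$ exchanged) together with $|\xi|^ke^{-ct^3|\xi|^2}\le C_kt^{-3k/2}$ and the fact that a $\Tdv$ hitting $K_t$ produces at most a factor $t|\xi|+|v-u|/t$, one obtains $\int|\Tdv K_t|\,du\le C(1+t^{-1/2})e^{-2t}$ and $\int\big||\xi|^kK_t\big|\,du\le C(1+t^{-3k/2})e^{-2t}$; Schur's test then yields $\|\Tdv e^{tA(\xi)}\|\le C(1+t^{-1/2})e^{-2t}$ and $\||\xi|^ke^{tA(\xi)}\|\le C(1+t^{-3k/2})e^{-2t}$, which upon squaring are exactly \eqref{mix2} and \eqref{mix3}. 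For the large-time regime one uses only that the confining kernel has an integrable Gaussian profile with polynomially growing prefactors, so the bounds there hold with constant (rather than negative) powers of $t$, consistently with the stated $1+t^{-1}$ and $1+t^{-3k}$.

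Equivalently, one can avoid the explicit kernel and argue by a Villani--H\'erau weighted energy functional in Fourier variables: with $E(t)=\|h\|^2+at\|\Tdv h\|^2+2bt^2\,\mathrm{Re}(i\xi\cdot\Tdv h,h)+ct^3|\xi|^2\|h\|^2$ and $0<c\ll b\ll a\ll1$ (the weights $t,t^2,t^3$ being forced by the parabolic scaling $v\sim t^{1/2}$, $x\sim t^{3/2}$), the commutator $[\Tdv,i(v\cdot\xi)]=i\xi$ creates the only sign-indefinite term, which is absorbed by the cross term; the cross term's derivative contributes the negative $-2bt^2|\xi|^2\|h\|^2$ together with lower-order remainders controlled by the velocity dissipation of $h$ and of $\Tdv h$ (the latter via \eqref{L_4}), so a standard tuning gives $\Dt E\le C\|g_0\|^2$ on $(0,1]$ with $E$ coercive, hence $\|\Tdv h(t)\|^2\le Ct^{-1}\|g_0\|^2$ and $|\xi|^2\|h(t)\|^2\le Ct^{-3}\|g_0\|^2$; higher $k$ then follow by the bootstrap $|\xi|^ke^{tA(\xi)}=(|\xi|e^{(t/k)A(\xi)})^k$, using that multiplication by $|\xi|$ commutes with $e^{tA(\xi)}$. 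Either way, the main obstacle is the same bookkeeping --- verifying, uniformly in $\xi$, that the Gaussian kernel estimates (resp.\ the choice of $a,b,c$) close with exactly the powers of $t$ predicted by the Kolmogorov scaling; the remainder is routine Gaussian calculus (resp.\ Gr\"onwall).
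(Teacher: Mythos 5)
Your primary argument is essentially the paper's own proof: both pass to the explicit Mehler/Kolmogorov kernel $\hat G_1(t,\xi,v;u)$ of $e^{tA(\xi)}$ obtained by Fourier transforming the exact Green's function \eqref{G11}, read off the damping factor $e^{-\frac{2D(t)}{1-e^{-2t}}|\xi|^2}$ with $D(t)\sim t^4/12$ as $t\to0$ to absorb $|\xi|^k$ at cost $t^{-3k/2}$ and the $t^{-1/2}$ loss from $\Tdv$ hitting the kernel, and close with the same Schur/Cauchy--Schwarz test on the row and column kernel integrals. The sketched Villani--H\'erau functional is a viable alternative but is not needed; the main route matches the paper.
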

\begin{proof}
Let $h(t,x,v)$ be the solution of the following linear Fokker-Planck equation:
\bma
\dt h+v\cdot\Tdx h &=\Tdv\cdot(\Tdv h+vh), \label{fp1}\\
h(0,x,v)&=h_0(x,v). \label{fp2}
\ema
Then $h$ can be represented by
$$ h(t,x,v)=\intr\intr G_0(t,x,v;y,u)h_0(y,u)dydu, $$
where the Green's function $G_0$ is given by \cite{Tanski1}
\bmas
G_0(t,x,v;y,u)=&\frac1{(2\pi)^6}\(\frac{\pi}{\sqrt{ D(t)}}\)^3\exp\bigg\{-\frac1{4D(t)}\bigg[\frac12(1-e^{-2t})|\hat{x}|^2\nnm\\
&-\(2(1-e^{-t})-(1-e^{-2t})\)\hat{x}\cdot \hat{v}\nnm\\
&+\frac12\(2t-4(1-e^{-t})+(1-e^{-2t})\)|\hat{v}|^2\bigg]\bigg\}, 
\emas
and
\bmas
&D(t)=\frac12\(t(1-e^{-2t})-2(1-e^{-t})^2\),\\
&\hat x=x-(y+u(1-e^{-t})),\quad \hat v=v-ue^{-t}.
\emas
By a direct computation, we obtain
\bma
G_0(t,x,v;y,u)=&\frac1{(2\pi)^6}\(\frac{\pi}{\sqrt{ D(t)}}\)^3\exp\bigg\{ -\frac{1-e^{-2t}}{8D(t)}\left|x-y-(v+u)\frac{1-e^{-t}}{1+e^{-t}}\right|^2\nnm\\
&-\frac1{2(1-e^{-2t})} |v-ue^{-t}|^2 \bigg\}.
\ema

Set
$$h=e^{2t}\sqrt M g.$$
It follows from \eqref{fp1}--\eqref{fp2} that $g$ satisfies
\bmas
&\dt g+v\cdot\Tdx g-Lg+2g=0,\\
&g(0,x,v)=g_0(x,v)=M^{-1/2}h_0(x,v).
\emas
Thus
\be g(t,x,v)=e^{tA}g_0=\intr\intr G_1(t,x,v;y,u) g_0(y,u)dydu, \label{G21}\ee
where
\bma
G_1(t,x,v;y,u)=&e^{-2t}\frac{1}{\sqrt{ M(v)}}G_0(t,x,v;y,u)\sqrt{ M(u)}\nnm\\
=&\frac1{(2\pi)^6}\(\frac{\pi}{\sqrt{D(t)}}\)^3\exp\bigg\{ -\frac{1-e^{-2t}}{8D(t)}\left|x-y-(v+u)\frac{1-e^{-t}}{1+e^{-t}}\right|^2\nnm\\
&-\frac{1+e^{-2t}}{4(1-e^{-2t})} \left|v\frac{2e^{-t}}{1+e^{-2t}}-u\right|^2-\frac{1-e^{-2t}}{4(1+e^{-2t})}|v|^2-2t \bigg\}. \label{G11}
\ema
Taking the Fourier transform to \eqref{G21} with respect to $x$, we obtain
\be \hat{g}(t,\xi,v)=e^{tA(\xi)}\hat{g}_0= \intr \hat{G}_1(t,\xi,v;u) \hat{g}_0(\xi,u)du, \label{G31}\ee
where
\bma
\hat{G}_1(t,\xi,v;u)=&\frac1{(2\pi)^3}\(\frac{4}{1-e^{-2t}}\)^{3/2}\exp\bigg\{-i\xi\cdot(v+u)\frac{1-e^{-t}}{1+e^{-t}} -\frac{2D(t)}{1-e^{-2t}}|\xi|^2\nnm\\
&-\frac{1+e^{-2t}}{4(1-e^{-2t})} \left|v\frac{2e^{-t}}{1+e^{-2t}}-u\right|^2-\frac{1-e^{-2t}}{4(1+e^{-2t})}|v|^2-2t \bigg\}. \label{G13}
\ema
It is easy to verified that
\bma
\hat{G}_1(t,\xi,v;u)&=\hat{G}_1(t,\xi,u;v),\label{G22}\\
\intr |\hat{G}_1(t,\xi,v;u)|du&\le Ce^{-2t}e^{ -\frac{2D(t)}{1-e^{-2t}}|\xi|^2}. \label{G23}
\ema

Since $D(t)> 0,\,t>0$ is strictly increasing and satisfies
\be D(t)=\frac{1}{12}t^4+O(t^5),\,\,\, t\to 0;\quad D(t)=\frac12 t+O(1),\,\,\, t\to \infty,\label{Dt}\ee
it follows from \eqref{G31}, \eqref{G22}--\eqref{Dt} that for any $g_0\in L^2(\R^3_v)$,
\bmas
\||\xi|^k e^{tA(\xi)}g_0\|^2=&|\xi|^{2k}\intr\left|\intr  \hat{G}_1(t,\xi,v;u) g_0(u)du\right|^2dv\\
\le& |\xi|^{2k} \sup_v\intr | \hat{G}_1(t,\xi,v;u)|du\intr\(\intr | \hat{G}_1(t,\xi,v;u)|dv\) g^2_0(u)du \\
\le &C\(1+\frac1{t^{3k}}\)e^{-4t} e^{ -\frac{2D(t)}{1-e^{-2t}}|\xi|^2}\|g_0\|^2.
\emas

By a direct computation, one has
\bma
\left|\Tdv \hat{G}_1(t,\xi,v;u)\right|=&\bigg|\(-i\xi\frac{1-e^{-t}}{1+e^{-t}}-\frac{e^{-t}}{1-e^{-2t}}\(v\frac{2e^{-t}}{1+e^{-2t}}-u\)-\frac{1-e^{-2t}}{2(1+e^{-2t})}v\)\hat{G}_1(t,\xi,v;u)\bigg|\nnm\\
\le &C\bigg(\sqrt{\frac{(1-e^{-t})^{3}}{D(t)}}+ \frac{e^{-t}}{\sqrt{1-e^{-4t}}}+\sqrt{\frac{1-e^{-2t}}{1+e^{-2t}}}\bigg)e^{-2t}\(\frac{4}{1-e^{-2t}}\)^{3/2}\nnm\\
&\exp\bigg\{ -\frac{D(t)}{1-e^{-2t}}|\xi|^2-\frac{1+e^{-2t}}{8(1-e^{-2t})} \left|v\frac{2e^{-t}}{1+e^{-2t}}-u\right|^2-\frac{1-e^{-2t}}{8(1+e^{-2t})}|v|^2 \bigg\}. \label{G12}
\ema
Thus, it follows from \eqref{G31}, \eqref{Dt} and \eqref{G12} that for any $g_0\in L^2(\R^3_v)$,
\bmas
\|\Tdv e^{tA(\xi)}g_0\|^2=&\intr\left|\intr \Tdv  \hat{G}_1(t,\xi,v;u) g_0(u)du\right|^2dv\\
\le& \sup_v\intr |\Tdv  \hat{G}_1(t,\xi,v;u)|du\intr\(\intr |\Tdv  \hat{G}_1(t,\xi,v;u)|dv\) g^2_0(u)du \\
\le &C \(1+\frac1t\)e^{-4t} e^{ -\frac{4D(t)}{1-e^{-2t}}|\xi|^2}\|g_0\|^2.
\emas
This completes the proof.
\end{proof}

We define the approximate sequence $\hat{I}_k$ for the high frequency part $\hat{G}_H$ as follow
\be \label{c-w}
\left\{\bln
&\dt \hat{I}_0+i v\cdot \xi \hat{I}_0-(L-2)\hat{I}_0=0,\\
& |\xi|^2\hat{E}_0=-(\hat{I}_0, \sqrt{M}),\\
&\hat{I}_0(0,\xi)=\chi_2(\xi)I_v,
\eln\right.
\ee
and
\be \label{c-w1}
\left\{\bln
&\dt \hat{I}_k+i v\cdot \xi \hat{I}_k-(L-2)\hat{I}_k=2\hat{I}_{k-1}+i v\cdot\xi  \sqrt{M} \hat{E}_{k-1}, \\
&|\xi|^2\hat{E}_k=-(\hat{I}_k, \sqrt{M}), \\
&\hat{I}_k(0,\xi)=0,
\eln\right.
\ee
for $k\ge 1$.
Define the singular waves and the remaining part as
\bgr
W_k(t,x)=\sum_{j=0}^{k}I_j(t,x),\quad
\psi_k(t,x)=\sum_{j=0}^{k} E_j(t,x), \label{w_i}
\\
 R_k(t,x)=G_H(t,x)-W_k(t,x),\quad  \phi_k(t,x)=\Phi_H(t,x)-\psi_k(t,x) \label{W-R}
 \egr
wtih $\Phi_H(t,x)= \Delta^{-1}_x(G_H(t,x),\sqrt{M})$.

It follows from \eqref{c-w} and \eqref{c-w1} that $\hat{W}_k(t,\xi)$ and  $ \hat{\psi}_k(t,\xi) $ satisfy
\be \label{ew}
\left\{\bln
&\dt \hat{W}_k+i v\cdot \xi \hat{W}_k-L\hat{W}_k-i v\cdot \xi \sqrt{M} \hat{\psi}_k=-2\hat{I}_{k}-i v\cdot \xi \sqrt{M} \hat E_{k},\\
&|\xi|^2 \hat{\psi}_k=(\hat{W}_k, \sqrt{M}),\\
&\hat{W}_k(0,\xi)=\chi_2(\xi)I_v,
\eln\right.
\ee
and  $\hat{R}_k(t,\xi)$ and  $ \hat{\phi}_k (t,\xi)$ satisfy
\be \label{er}
\left\{\bln
&\dt \hat R_k+i v\cdot \xi \hat R_k-L\hat R_k-i v\cdot \xi \sqrt{M} \hat \phi_k=2\hat{I}_{k}+i v\cdot \xi \sqrt{M} \hat E_{k},\\
&|\xi|^2 \hat \phi_k=(\hat R_k, \sqrt{M}),\\
&\hat R_k(0,\xi)=0.
\eln\right.
\ee

With the help of Lemma \ref{mix1a},
we can show the spatial derivative estimate  of the approximate sequence $I_k(t,x)$ and $E_k(t,x)$ as follow.

\begin{lem}\label{W_1}
For each $k,j\ge 0$ and $\alpha\in \mathbb{N}^3$, 
$\hat{I}_{j}(t,\xi) $ and $\hat{E}_{j}(t,\xi) $  are smooth and supported in $\{\xi\in \R^3\,|\,|\xi|\ge R\}$, and they  satisfy the following estimates.

(1) For $0<t\le 1$,
\be
\||\xi|^k\pt^{\alpha}_{\xi}\hat{I}_{j}(t,\xi)\|+ ||\xi|^k\pt^{\alpha}_{\xi}(\xi \hat{E}_{j}(t,\xi))|\le C  t^{j-3k/2} ,\label{w1}
\ee

(2) For $t>1$,
\be
\||\xi|^k\pt^{\alpha}_{\xi}\hat{I}_{j}(t,\xi)\|+ ||\xi|^k\pt^{\alpha}_{\xi}(\xi \hat{E}_{j}(t,\xi))|\le C t^je^{-2 t} .\label{w2}
\ee
\end{lem}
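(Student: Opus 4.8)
The plan is to prove the estimates \eqref{w1} and \eqref{w2} by induction on $k$ (the iteration index, written $j$ in the statement), starting from the base case of the free-transport-damped semigroup $e^{tA(\xi)}$. First I would write $\hat{I}_0(t,\xi)=e^{tA(\xi)}\chi_2(\xi)$ and $\hat{E}_0=-|\xi|^{-2}(\hat{I}_0,\sqrt M)$, and apply Lemma~\ref{mix1a}: the two key inputs \eqref{mix2}--\eqref{mix3} give $\||\xi|^ke^{tA(\xi)}g_0\|^2\le C(1+t^{-3k})e^{-4t}\|g_0\|^2$ and the analogous bound for $\Tdv$. Since $\hat I_0$ is supported in $|\xi|\ge R$, the factor $|\xi|^{-2}$ in $\hat E_0$ is harmless (it is bounded by $R^{-2}$), so $\||\xi|^k\hat E_0\xi\|\lesssim\||\xi|^{k-1}\hat I_0\|$ and the same type of bound holds. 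For $0<t\le1$ this yields the $t^{-3k/2}$ singularity in \eqref{w1} (with $j=0$), while for $t>1$ the factor $e^{-4t}$ dominates and gives \eqref{w2}; one should check the constant $e^{-2t}$ versus $e^{-4t}$ is the slack used later for the $\hat\phi_k$, $\hat R_k$ energy estimates.

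The inductive step uses the Duhamel representation coming from \eqref{c-w1}:
\be
\hat{I}_k(t,\xi)=\intt e^{(t-s)A(\xi)}\big(2\hat{I}_{k-1}(s,\xi)+i(v\cdot\xi)\sqrt M\,\hat E_{k-1}(s,\xi)\big)\,ds. \nnm
\ee
Applying $|\xi|^k$ and Lemma~\ref{mix1a} inside the integral, and noting $|(v\cdot\xi)\sqrt M\,\hat E_{k-1}|$ carries one extra power of $|\xi|$ which is absorbed by writing $|\xi|\hat E_{k-1}$ (controlled by the induction hypothesis with index shifted), I get
\be
\||\xi|^k\hat I_k(t,\xi)\|\lesssim\intt (1+(t-s)^{-3k/2})^{1/2}e^{-2(t-s)}\big(\||\xi|^k\hat I_{k-1}(s)\|+\||\xi|^{k+1}\hat E_{k-1}(s)\|\big)ds.\nnm
\ee
For $t\le1$ the induction hypothesis gives an integrand $\lesssim (t-s)^{-3k/2}s^{k-1-3k/2}$ type (modulo the $|\xi|$-weight bookkeeping), whose $s$-integral over $[0,t]$ is $\lesssim t^{k-3k/2}$ provided the exponents stay $>-1$ — this is exactly where the gain of one power of $s$ per iteration (the "refined" Picard iteration of \cite{Lin1}) makes the time-integral converge; for $t>1$ one splits $\intt=\int_0^1+\int_1^t$, bounds the first piece by a constant and the second by $\int_1^t e^{-2(t-s)}s^{k-1}e^{-2s}ds\lesssim t^ke^{-2t}$. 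The frequency derivatives $\pt^\alpha_\xi$ are handled by differentiating the Duhamel formula: $\pt^\alpha_\xi$ hitting $e^{(t-s)A(\xi)}$ produces extra factors of $(t-s)(v\cdot\text{stuff})$ which, together with the Gaussian in $v$ from the explicit kernel $\hat G_1$ in \eqref{G13}, remain bounded (polynomially in $t-s$, hence harmless for $t\le1$ and beaten by $e^{-2(t-s)}$ for $t>1$), while $\pt^\alpha_\xi$ hitting $v\cdot\xi$ or $\chi_2(\xi)$ only lowers the $|\xi|$-power and is covered by the same induction; the smoothness and support in $|\xi|\ge R$ are inherited from $\chi_2$ and the analyticity of $e^{tA(\xi)}$ in $\xi$ evident from \eqref{G13}.

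The main obstacle I anticipate is the careful bookkeeping of the three coupled weights — the power of $|\xi|$, the power of $t$ (resp.\ $t-s$) from the kernel singularity, and the number of $\xi$-derivatives — so that every $s$-integral in the induction has exponent strictly greater than $-1$ and the claimed exponent $j-3k/2$ comes out exactly. A secondary point requiring care is the $\hat E_k$ estimate near $|\xi|=R$: although $|\xi|\ge R$ keeps $|\xi|^{-2}$ bounded, each $\xi$-derivative of $|\xi|^{-2}$ produces more negative powers of $|\xi|$, which are again bounded on the support but must be tracked so the constant $C$ in \eqref{w1}--\eqref{w2} is finite. Everything else is routine once Lemma~\ref{mix1a} is in hand.
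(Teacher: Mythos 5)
Your setup — the Duhamel formula from \eqref{c-w1}, the kernel bounds of Lemma \ref{mix1a}, induction on the iteration index, and absorbing $\hat E_j$ via $|\xi|\ge R$ — is the same as the paper's, but the inductive step as you wrote it does not close. In your displayed bound you let the weight act on the semigroup (producing $(1+(t-s)^{-3k})^{1/2}\sim (t-s)^{-3k/2}$) \emph{and} simultaneously keep the weighted norm $\||\xi|^{k}\hat I_{k-1}(s)\|$ of the source; inserting the induction hypothesis then gives an integrand of size $(t-s)^{-3k/2}\,s^{\,j-1-3k/2}$, and since $-3k/2\le -3/2<-1$ for every $k\ge 1$ the $s$-integral diverges at $s=t$ (and the exponent $j-1-3k/2$ is also $\le -1$ near $s=0$ whenever $j\le 3k/2$). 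So the caveat you flag, ``provided the exponents stay $>-1$,'' is exactly what fails, for every $k\ge1$ and every $j$. The missing idea — and the actual content of the paper's proof — is to split the time integral at $s=t/2$ and to put the frequency weight only on the factor evaluated at a time bounded below by $t/2$: on $[0,t/2]$ use $\||\xi|^{k}e^{(t-s)A(\xi)}\|\le C(t-s)^{-3k/2}e^{-2(t-s)}\le Ct^{-3k/2}$ together with the \emph{unweighted} bound $\|\hat I_{j-1}(s)\|\le Cs^{\,j-1}$; on $[t/2,t]$ use the contraction $\|e^{(t-s)A(\xi)}\|\le Ce^{-2(t-s)}$ together with $\||\xi|^{k}\hat I_{j-1}(s)\|\le Cs^{\,j-1-3k/2}\le Ct^{\,j-1-3k/2}$. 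Both pieces then give $Ct^{\,j-3k/2}$ with no divergent integral. The same redistribution is needed for $t>1$: your splitting $\int_0^t=\int_0^1+\int_1^t$ either reintroduces the kernel singularity (when $t$ is near $1$) or forces the weighted inductive bound to be integrated down to $s=0$; the paper instead restarts Duhamel at time $1$, writing $\hat I_j(t)=e^{(t-1)A(\xi)}\hat I_j(1)+\int_1^t e^{(t-s)A(\xi)}\bigl(2\hat I_{j-1}+i(v\cdot\xi)|\xi|^{-2}P_0\hat I_{j-1}\bigr)ds$, so every weighted factor is evaluated at a time $\ge1$ and carries no singularity.

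Two secondary points. For the $\xi$-derivatives the paper does not differentiate the semigroup kernel as you propose; it differentiates the equations \eqref{c-w}, \eqref{c-w1} in $\xi$, so the commutators are multiplications by $v^{\beta}$, and this requires the extra kernel estimate $\||\xi|^{k}e^{tA(\xi)}|v|g_0\|\le C(1+t^{-(3k+1)/2})e^{-2t}\|g_0\|$ (the paper's \eqref{G_4}), again combined with the $t/2$-splitting; your route of differentiating $\hat G_1$ directly can be made to work, but the factor of order $|\xi|D(t)/(1-e^{-2t})$ produced by $\pt_\xi$ must be absorbed by the Gaussian in $\xi$, which you only gesture at. Finally, keep the indices separate: the rate is $t^{\,j-3k/2}$ with $j$ the iteration index and $k$ the $|\xi|$-weight, and in your write-up both are called $k$, which obscures precisely the bookkeeping on which the induction hinges.
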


\begin{proof}
First, we want to show that for any $0<t\le 1$,
\be \||\xi|^k\hat{I}_j(t)\|\le t^{-3k/2+j} ,\quad \forall j,k\ge 0. \label{ijk}\ee
The estimate of $\hat{I}_0$ is immediately from Lemma \ref{mix1a}.
For $j\ge 1$, it follows from \eqref{c-w1} that
$$I_j(t)=\intt e^{(t-s)A(\xi)}(2I_{j-1}+i(v\cdot\xi)|\xi|^{-2}P_0I_{j-1})ds.$$
By induction and Lemma \ref{mix1a}, we obtain
\bmas
|\xi|^k\|\hat{I}_j(t)\|\le& \int^{t/2}_{0}\||\xi|^ke^{(t-s)A(\xi)}\|(  \|\hat{I}_{j-1}\|+ |\xi|^{-1} \|\hat{I}_{j-1}\|)ds\\
&+\int^{t}_{t/2}\|e^{(t-s)A(\xi)}\|( |\xi|^k\|\hat{I}_{j-1}\|+ |\xi|^{k-1} \|\hat{I}_{j-1}\|)ds\\
\le& C\int^{t/2}_{0}(t-s)^{-3k/2}s^{j-1} ds+C\int^{t}_{t/2} s^{-3k/2+j-1} ds\\
\le& C t^{-3k/2+j} ,\quad \forall j\ge 1.
\emas
This proves \eqref{ijk}.

Next, we claim that for $0<t\le 1$ and $|\alpha|\ge 1$,
\be |\xi|^{k}\|\pt^{\alpha}_{\xi}\hat{I}_j(t)\|\le Ct^{-3k/2+j} ,\quad \forall j,k\ge 0.\label{ijk1} \ee
Taking $\pt^{\alpha}_{\xi}$ to \eqref{c-w} with $|\alpha|\ge 1$, we obtain
$$\dt \pt^{\alpha}_{\xi}\hat{I}_0+i(v\cdot\xi)\pt^{\alpha}_{\xi}\hat{I}_0-(L-2)\pt^{\alpha}_{\xi}\hat{I}_0=-\sum_{\beta\le \alpha, |\beta|=1}iv^{\beta}\pt^{\alpha-\beta}_{\xi}\hat{I}_0.$$
Then we can represent $\pt^{\alpha}_{\xi}\hat{I}_0$ as
$$\pt^{\alpha}_{\xi}\hat{I}_0=e^{tA(\xi)}\pt^{\alpha}_{\xi}\chi_2(\xi)-\sum_{\beta\le \alpha, |\beta|=1}\intt  e^{(t-s)A(\xi)}iv^{\beta}\pt^{\alpha-\beta}_{\xi}\hat{I}_0ds.$$

Since
\bmas
&\intr |\hat{G}_1(t,\xi,v;u)||u|^2du\\
=&C\(\frac{4}{1-e^{-2t}}\)^{3/2}e^{-2t}e^{-\frac{2D(t)}{1-e^{-2t}}|\xi|^2}
e^{-\frac{1-e^{-2t}}{4(1+e^{-2t})}|v|^2}\\
&\times \intr e^{-\frac{1+e^{-2t}}{4(1-e^{-2t})} \left|v\frac{2e^{-t}}{1+e^{-2t}}-u\right|^2}|u|^2du\nnm\\
\le &C e^{-2t}e^{-\frac{2D(t)}{1-e^{-2t}}|\xi|^2}e^{-\frac{1-e^{-2t}}{4(1+e^{-2t})}|v|^2}\bigg[\frac{4e^{-2t}}{(1+e^{-2t})^2}|v|^2+(1-e^{-t})\bigg]\\
\le &C\(1+\frac1t\) e^{-2t}e^{-\frac{2D(t)}{1-e^{-2t}}|\xi|^2},
\emas
it follows from \eqref{G22}--\eqref{G23} that for any $g_0\in L^2(\R^3_v)$,
\bma
&|\xi|^{2k}\|e^{tA(\xi)}|v|g_0\|^2\nnm\\
\le& |\xi|^{2k}\sup_v\intr |\hat{G}_1(t,\xi,v;u)||u|^2du\intr\(\intr |\hat{G}_1(t,\xi,v;u)|dv\) g_0^2(u) du\nnm\\
 \le &\(1+\frac1{t^{3k+1}}\) e^{-4t} e^{-\frac{2D(t)}{1-e^{-2t}}|\xi|^2}\|g_0\|^2. \label{G_4}
\ema

By induction, \eqref{ijk} and \eqref{G_4}, we obtain that for $0<t\le 1$ and $|\alpha|\ge 1$,
\bma
|\xi|^{k}\|\pt^{\alpha}_{\xi}\hat{I}_0\|\le &C|\xi|^{k}\| e^{tA(\xi)}\| +\int^{t/2}_0  \||\xi|^ke^{(t-s)A(\xi)}|v|\|\|\pt^{\alpha-1}_{\xi}\hat{I}_0\| ds\nnm\\
&+\int^t_{t/2}  \|e^{(t-s)A(\xi)}|v|\| |\xi|^k\|\pt^{\alpha-1}_{\xi}\hat{I}_0\| ds\nnm\\
\le &C\int^{t/2}_0(t-s)^{-\frac{3k+1}{2}}ds  +C \int^t_{t/2}(t-s)^{-\frac{1}{2}}s^{-\frac{3k}{2}}ds \nnm\\
\le &Ct^{-3k/2} ,\quad \forall k\ge 0. \label{ijk2}
\ema

Taking $\pt^{\alpha}_{\xi}$ to  \eqref{c-w1}  with $|\alpha|\ge 1$, we obtain
\bmas
&\dt \pt^{\alpha}_{\xi}\hat{I}_j+i(v\cdot\xi)\pt^{\alpha}_{\xi}\hat{I}_j-(L-2)\pt^{\alpha}_{\xi}\hat{I}_j\\
=& \pt^{\alpha}_{\xi}\(2\hat{I}_{j-1}+i(v\cdot\xi)|\xi|^{-2}P_0\hat{I}_{j-1}\)-\sum_{\beta\le \alpha, |\beta|=1}iv^{\beta}\pt^{\alpha-\beta}_{\xi}\hat{I}_j.
\emas
Then we can represent $\pt^{\alpha}_{\xi}\hat{I}_j$ for $j\ge 1$ as
\bmas
\pt^{\alpha}_{\xi}\hat{I}_j=&\intt  e^{(t-s)A(\xi)}\pt^{\alpha}_{\xi}\(2\hat{I}_{j-1}+i(v\cdot\xi)|\xi|^{-2}P_0\hat{I}_{j-1}\)ds\\
 &-\sum_{\beta\le \alpha, |\beta|=1}\intt  e^{(t-s)A(\xi)}iv^{\beta}\pt^{\alpha-\beta}_{\xi}\hat{I}_jds.
\emas
Thus, by induction, \eqref{ijk} and \eqref{G_4} we can obtain that for $|\alpha|\ge 1$ and $j\ge 1$,
\bma
|\xi|^{k}\|\pt^{\alpha}_{\xi}\hat{I}_j\|\le & C\int^{t/2}_0  \||\xi|^ke^{(t-s)A(\xi)}\|\bigg(\|\pt^{\alpha}_{\xi}\hat{I}_{j-1}\|+\sum_{\beta\le\alpha}\frac{1}{|\xi|^{1+|\beta|}}\|\pt^{\alpha-\beta}_{\xi} \hat{I}_{j-1}\|\bigg) ds\nnm\\
&+C\int^t_{t/2}  \|e^{(t-s)A(\xi)}\| |\xi|^k\bigg(\|\pt^{\alpha}_{\xi}\hat{I}_{j-1}\|+\sum_{\beta\le\alpha}\frac{1}{|\xi|^{1+|\beta|}}\|\pt^{\alpha-\beta}_{\xi} \hat{I}_{j-1}\|\bigg)ds \nnm\\
& +\int^{t/2}_0  \||\xi|^ke^{(t-s)A(\xi)}|v|\|\|\pt^{\alpha-1}_{\xi}\hat{I}_j\| ds\nnm\\
&+\int^t_{t/2}  \|e^{(t-s)A(\xi)}|v|\| |\xi|^k\|\pt^{\alpha-1}_{\xi}\hat{I}_j\| ds\nnm\\
\le &Ct^{-3k/2+j} ,\quad \forall k\ge 0. \label{ijk3}
\ema
Then,
\be
|\xi|^{k}|\pt^{\alpha}_{\xi}(\xi\hat{E}_j)|\le C\sum_{\beta\le \alpha}\pt^{\beta}_{\xi}\(\frac{\xi}{|\xi|^2}\)|\xi|^{k}|(\pt^{\alpha-\beta}_{\xi}\hat{I}_j,\sqrt M)|
\le Ct^{-3k/2+j} ,\label{ijk5}
\ee
for $0<t\le 1$. By combining \eqref{ijk2}--\eqref{ijk5}, we prove \eqref{ijk1}.

Next, we want to show that for $t>1$,
$$|\xi|^k\|\hat{I}_j(t)\|\le Ct^je^{-2 t} , \quad \forall j,k\ge 0.$$
Since
$$\hat{I}_0(t)=e^{(t-1)A(\xi)}\hat{I}_0(1),\quad t>1,$$
it follows from \eqref{ijk} and Lemma \ref{mix1a} that
\be |\xi|^k\|\hat{I}_0(t)\|\le Ce^{-2(t-1)}|\xi|^k\|\hat{I}_0(1)\|\le Ce^{-2 (t-1)} . \label{ijk6}\ee
Noting that
$$ \hat{I}_j(t)=e^{(t-1)A(\xi)}\hat{I}_{j}(1)+\int^t_1 e^{(t-s)A(\xi)}(2\hat{I}_{j-1}+i(v\cdot\xi)|\xi|^{-2}P_0\hat{I}_{j-1})ds,$$
we have
\bma
|\xi|^k\|\hat{I}_j(t)\|\le& Ce^{-2(t-1)}|\xi|^k\|\hat{I}_j(1)\|+\int^t_1e^{-2(t-s)}|\xi|^k(1+|\xi|^{-1})\|\hat{I}_{j-1}\|ds\nnm\\
\le& Ce^{-2(t-1)} +\int^t_1e^{-2(t-s)}s^{j-1}e^{-2s} ds\nnm\\
\le& Ct^je^{-2t} ,\quad t>1. \label{ijk7}
\ema
For $t>1$,  $\pt^{\alpha}_{\xi}\hat{I}_0 $ can be written as
$$\pt^{\alpha}_{\xi}\hat{I}_0(t)=e^{(t-1)A(\xi)}\pt^{\alpha}_{\xi}\hat{I}_0(1)-\sum_{\beta\le \alpha, |\beta|=1}\int^t_1  e^{(t-s)A(\xi)}iv^{\beta}\pt^{\alpha-\beta}_{\xi}\hat{I}_0ds.$$
By induction, \eqref{ijk6} and \eqref{G_4}, we obtain that for $t>1$ and $|\alpha|\ge 1$,
\bma
|\xi|^{k}\|\pt^{\alpha}_{\xi}\hat{I}_0\|\le &Ce^{-2(t-1)}|\xi|^{k}\|\pt^{\alpha}_{\xi}\hat{I}_0(1)\| +\int^{t}_1 \|e^{(t-s)A(\xi)}(|\xi|^k|v|\pt^{\alpha-1}_{\xi}\hat{I}_0)\| ds\nnm\\
\le &Ce^{-2(t-1)} +C \int^t_1\(1+(t-s)^{-\frac12}\)e^{-2(t-s)}s^{j-1}e^{-2s} ds \nnm\\
\le & Ct^je^{-2t} ,\quad \forall k\ge 0. \label{ijk8}
\ema
By the similar arguments as \eqref{ijk8} and \eqref{ijk5}, we can prove
$$|\xi|^k\|\pt^{\alpha}_{\xi}\hat{I}_j(t)\|+|\xi|^{k}|\pt^{\alpha}_{\xi}(\xi\hat{E}_j(t))|\le Ct^je^{-2 t} ,\quad j\ge 0,\,\,t>1.$$
The proof of the lemma is completed.
\end{proof}

With the help of Lemma \ref{W_1}, we can show the pointwise estimates of the remaining terms $R_k(t,x)$ and $\phi_k(t,x)$ defined by \eqref{W-R} as follows.

\begin{lem}\label{W_2} For any $n\ge 1$ and $\alpha\in \N^3$, there exists a small constant $\delta_0>0$  such that for any $\delta\in (0,\delta_0)$,
\be
\|\dxa R_k(t,x)\|+|\dxa \Tdx  \phi_k(t,x)|\le C\delta^{-1}e^{2\delta t}(1+|\delta x|^2)^{-n}, \label{x4}
\ee
where $k\ge 6+3|\alpha|/2$ and $C>0$ is a constant dependent of  $n$ and $\alpha$.
\end{lem}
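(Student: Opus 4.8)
The plan is to run a weighted energy estimate in frequency space on the system \eqref{er} for $\hat{R}_k$ and $\hat{\phi}_k$, exactly as in the low-frequency analysis (Lemma \ref{lr1}), but now exploiting that the forcing terms $2\hat{I}_k + iv\cdot\xi\sqrt{M}\hat{E}_k$ decay rapidly in $t$ and, thanks to Lemma \ref{W_1}, gain factors of $|\xi|^{-k}$ for $k$ large (for $t>1$) while being controlled by $t^{j-3k/2}$ for short time. Since all of $\hat{R}_k,\hat{I}_k,\hat{E}_k$ are supported in $\{|\xi|\ge R\}$, any negative power of $|\xi|$ is harmless and in fact can be traded for smallness. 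The key claim to establish by induction on $|\alpha|$ is that for $k\ge 6+3l/2$ and any $\delta\in(0,\delta_0)$,
\be
\|\pt^\alpha_\xi \hat{R}_k(t,\xi)g_0\|_\xi \le C\delta^{-1-2|\alpha|}e^{2\delta t}(1+|\xi|)^{-4-l}\|g_0\|,\nnm
\ee
after which taking inverse Fourier transform and multiplying by $x^{2\beta}$ (controlled by $\pt^{2\beta}_\xi$), integrating $(1+|\xi|)^{-4-l}$ over $\R^3_\xi$ which converges once $l\ge 1$, yields \eqref{x4} upon choosing $\delta$ small and $l$ large relative to $n$ and $|\alpha|$.

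\textbf{Main steps.} First, take the inner product $(\cdot,\cdot)_\xi$ of \eqref{er} with $\hat{R}_k$, use the coercivity \eqref{L_4} of $L$ on $P_1\hat{R}_k$, note $(iv\cdot\xi\sqrt{M}\hat{E}_k,\sqrt{M})$-type cancellations in the Poisson coupling as in \eqref{n2}, and absorb the macroscopic part using the Poisson relation $|\xi|^2\hat{\phi}_k=(\hat{R}_k,\sqrt{M})$ together with $|\xi|\ge R$; this is where the $\delta^{-1}$ prefactor appears, because to close the estimate one must borrow a small multiple $\delta|\xi|^{-2}|\hat n_k|^2$ of the macroscopic term and pay $\delta^{-1}$ on the right. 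Using \eqref{w1}--\eqref{w2} to bound $\||\xi|^{4+l}\hat{I}_k\|$ and $\||\xi|^{4+l}\xi\hat{E}_k\|$ (finite since $k\ge 6+3l/2$ makes $-3k/2+k\le -4-l$ roughly; the precise bookkeeping of powers is the routine part), one gets $\|\hat{R}_k(t,\xi)g_0\|_\xi\le C\delta^{-1}e^{2\delta t}(1+|\xi|)^{-4-l}\|g_0\|$ via Gronwall. For the inductive step, differentiate \eqref{er} $|\alpha|$ times in $\xi$, collect the commutator terms $\pt^\beta_\xi(v\cdot\xi)$, $\pt^\beta_\xi(v\cdot\xi/|\xi|^2)$ and the differentiated forcing, and run the same energy estimate; the lower-order terms $\pt^{\alpha-\beta}_\xi\hat{R}_k$ are handled by the induction hypothesis, each differentiation costing an extra $\delta^{-2}$ through the Cauchy inequality splitting (as in \eqref{n6}), and the differentiated forcing is again controlled by Lemma \ref{W_1}.

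\textbf{Passing to pointwise bounds.} Once \eqref{v2}-type frequency estimates are in hand, write $\dxa R_k(t,x) = c\int e^{ix\cdot\xi}(i\xi)^\alpha\hat{R}_k(t,\xi)\,d\xi$ and estimate $x^{2\beta}\dxa R_k$ by moving $x^{2\beta}$ inside as $\pt^{2\beta}_\xi$ acting on $(i\xi)^\alpha\hat{R}_k$; since $\hat{R}_k$ is supported in $|\xi|\ge R$ the integrand is bounded by $C\delta^{-1-2|\alpha|-4|\beta|}e^{2\delta t}(1+|\xi|)^{|\alpha|-4-l}$, which is integrable in $\xi\in\R^3$ as soon as $l>|\alpha|-1$. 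Summing over $|\beta|\le n$ and choosing $l$ large gives $(1+|\delta x|^2)^{-n}\|\dxa R_k\|\le C\delta^{-1}e^{2\delta t}$, i.e.\ \eqref{x4}; the Poisson term $\dxa\Tdx\phi_k$ is treated identically using $|\xi|^2\hat{\phi}_k=(\hat{R}_k,\sqrt{M})$ and $|\xi|\ge R$.

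\textbf{Anticipated obstacle.} The delicate point is the short-time ($0<t\le1$) regime: the forcing bounds from Lemma \ref{W_1} carry the singular factor $t^{-3k/2}$, yet also the compensating factor $t^{j}$ from the Picard iteration, and one must verify that the time integrals $\int_0^t e^{2\delta(t-s)} s^{\,k-3k/2}\,ds = \int_0^t e^{2\delta(t-s)} s^{-k/2}\,ds$ actually converge — which forces us to track exactly how many factors of $|\xi|^{-1}$ (hence how much extra smallness from $|\xi|\ge R$) we can trade for powers of $s$, and to make sure the threshold $k\ge 6+3|\alpha|/2$ is the right one. Getting this exponent bookkeeping consistent across the base case and every inductive step, while simultaneously keeping the $\delta$-powers matched, is the real work; everything else is a direct transcription of the low-frequency argument of Lemma \ref{lr1}.
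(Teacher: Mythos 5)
Your proposal matches the paper's own proof essentially step for step: a weighted energy estimate on \eqref{er} in the $(\cdot,\cdot)_\xi$ inner product with induction on the frequency derivatives (each costing $\delta^{-2}$), the forcing $2\hat{I}_k+iv\cdot\xi\sqrt{M}\hat{E}_k$ controlled by combining \eqref{w1}--\eqref{w2} into the bound $Ce^{-t}(1+|\xi|)^{-4-l}$ for $k\ge 6+3l/2$, and then the passage to pointwise decay by converting $x^{2\beta}$ into $\pt^{2\beta}_\xi$ on the support $|\xi|\ge R$. Your anticipated short-time obstacle in fact does not arise: the weight needed is $|\xi|^{4+l}$ (not $|\xi|^{k}$), so Lemma \ref{W_1} gives the exponent $t^{\,k-3(4+l)/2}\ge t^0$, which is bounded precisely because $k\ge 6+3l/2$, and the Gronwall integral is regular.
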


\begin{proof}
For any $g_0\in L^2(\R^3_v)$, we define
$$\hat{\mathrm{R}}_k(t,\xi)=\hat{R}_k(t,\xi)g_0,\quad \hat{\mathrm{\Omega}}_k(t,\xi)=\hat{\phi}_k(t,\xi)g_0.$$
We claim that for any $l\ge 0$ and $\alpha \in \mathbb{N}^3$,
\be \|\pt^\alpha_\xi \hat{\mathrm{R}}_k(t,\xi)\|_\xi\le C\delta^{-1-2|\alpha|}e^{2\delta t}(1+|\xi|)^{-4-l}\|g_0\|, \label{v3}\ee
where $k\ge 6+3l/2$ and  $\delta\in (0,\delta_0)$.

We prove \eqref{v3} by induction. From Lemma \ref{W_1}, it holds for any $k\ge 6+3l/2$ and $\alpha\in \N^3$,
\be \|\pt^{\alpha}_{\xi}\hat{I}_k\|+|\pt^{\alpha}_{\xi}(\xi \hat{E}_k)|\le Ce^{-t}(1+|\xi|)^{-4-l}. \label{ie}\ee
Taking the inner product between \eqref{er} and $\hat{R}_k+|\xi|^{-2}P_0\hat{R}_k$ and using Cauchy inequality, we have
\be
\frac12\Dt \|\hat{\mathrm{R}}_k\|^2_{\xi}+\frac{\mu}2\| P_1\hat{\mathrm{R}}_k\|^2_{\sigma}
\le \frac{C}{\delta}(\|\hat{\mathrm{I}}_{k}\|^2_{\xi}+|\xi|^2|\hat{\mathrm{E}}_{k}|^2) +\delta\|P_0\hat{\mathrm{R}}_k\|^2_{\xi},\label{H_1}
\ee
where $\delta\in (0,\delta_0)$ with $\delta_0>0$ small enough, and
$$\hat{\mathrm{I}}_k(t,\xi)=\hat{I}_k(t,\xi)g_0,\quad \hat{\mathrm{E}}_k(t,\xi)=\hat{E}_k(t,\xi)g_0.$$
Applying Gronwall's inequality to \eqref{H_1} and using \eqref{w1}, we obtain
\bma
\|\hat{\mathrm{R}}_k\|^2_{\xi}
\le &\frac{C}{\delta}\|g_0\|^2\intt e^{2\delta(t-s)}(1+|\xi|)^{-8-2l}e^{-2s} ds\nnm\\
\le &C\delta^{-2}(1+|\xi|)^{-8-2l}e^{2\delta t}\|g_0\|^2. \label{r6a}
\ema

Suppose that \eqref{v3} holds for $|\alpha|\le j-1$. Taking the derivative $\pt^\alpha_\xi$ to \eqref{r4} with $|\alpha|=j$ to get
\be
\dt\pt^\alpha_\xi \hat{\mathrm{R}}_k+i v\cdot\xi \pt^\alpha_\xi\hat{\mathrm{R}}_k-L\pt^\alpha_\xi \hat{\mathrm{R}}_k-i\frac{v\cdot\xi}{|\xi|^2}P_0 \pt^\alpha_\xi\hat{\mathrm{R}}_k= G_{k,\alpha}, \label{I_2}
\ee
where
\bmas
G_{k,\alpha}=&\sum_{\beta\le \alpha,|\beta|\ge 1} C^\beta_\alpha \(i\pt^\beta_\xi(v\cdot\xi)\pt^{\alpha-\beta}_\xi\hat{\mathrm{R}}_k+i\pt^\beta_\xi\(\frac{v\cdot\xi}{|\xi|^2}\)P_0\pt^{\alpha-\beta}_\xi \hat{\mathrm{R}}_k\)\\
&+i \pt^\alpha_\xi(v\cdot\xi \hat{\mathrm{E}}_{k} )\sqrt{M}+2\pt^\alpha_\xi\hat{\mathrm{I}}_{k} .
\emas

Taking the inner product between \eqref{I_2} and $\pt^\alpha_\xi \hat{\mathrm{R}}_k+|\xi|^{-2}P_0\pt^\alpha_\xi \hat{\mathrm{R}}_k$ and using Cauchy inequality, we have
\bma
&\frac12\Dt \|\pt^\alpha_\xi \hat{\mathrm{R}}_k\|^2_{\xi}+\frac{\mu}2 \|\pt^\alpha_\xi P_1\hat{\mathrm{R}}_k\|^2_{\sigma}\nnm\\
\le&  \frac C{\delta} \(\|\pt^\alpha_\xi \hat{\mathrm{I}}_{k}\|^2_{\xi}+|\pt^\alpha_\xi (\xi \hat{\mathrm{E}}_{k})|^2\)+\frac{C}{\delta}\bigg( |\pt^{\alpha-1}_\xi\hat{m}_k|^2+\sum^{|\alpha|}_{|\beta|=1} |\pt^{\alpha-\beta}_\xi n_k|^2\bigg)\nnm\\
& +\frac C{\delta} \| \pt^{\alpha-1}_\xi P_1\hat{\mathrm{R}}_k\|^2+\delta \|P_0\pt^\alpha_\xi \hat{\mathrm{R}}_k\|^2_{\xi},\label{r7}
\ema
where
$$\hat{n}_k=(\hat{\mathrm{R}}_k, \sqrt{M}),\quad \hat{m}_k=(\hat{\mathrm{R}}_k,v \sqrt{M}).$$
Thus, it follows from \eqref{r6a} and \eqref{r7} that
\bmas
\|\pt^\alpha_\xi \hat{\mathrm{R}}_k\|^2_{\xi}\le& \frac{C}{\delta}\|g_0\|^2(1+|\xi|)^{-8-2l}\(\intt e^{2\delta(t-s)}e^{-2s}ds+\delta^{-2|\alpha|}\intt e^{2\delta(t-s)}e^{4\delta s}ds\)\nnm\\
\le& C\delta^{-2-2|\alpha|}(1+|\xi|)^{-8-2l} e^{4\delta t}\|g_0\|^2.
\emas
This proves \eqref{v3} for $|\alpha|=j$.

Therefore, it holds for any $\gamma\in \mathbb{N}^3$,
\bmas
&\|\pt^\alpha_\xi(\xi^\gamma \hat{\mathrm{R}}_k)\|+|\pt^\alpha_\xi(\xi^\gamma\xi \hat{\Omega}_k)|\\
\le&\sum_{\beta\le \alpha}C^\beta_\alpha\bigg(\Big\|\pt^\beta_\xi(\xi^\gamma)\pt^{\alpha-\beta}_\xi\hat{\mathrm{R}}_k\Big\| +\Big|\pt^\beta_\xi\(\frac{\xi^\gamma\xi}{|\xi|^2}\)\pt^{\alpha-\beta}_\xi\hat{n}_k\Big|\bigg)\\
\le&C\sum^{|\alpha|}_{|\beta|=0}|\xi|^{|\gamma|-|\beta|}\|\pt^{\alpha-\beta}_\xi\hat{\mathrm{R}}_k\|_\xi
\le C\delta^{-1-|\alpha|}(1+|\xi|)^{-4}e^{2\delta t}\|g_0\|,
\emas
where $k\ge 6+3|\gamma|/2,$ which leads to
\bmas
(\|\pt^{\gamma}_x\mathrm{R}_k(t,x)\|+|\pt^{\gamma}_x\Tdx \Omega_k(t,x)|)x^{2\alpha}
\le& C \int_{|\xi|\ge R} (\|\pt^{2\alpha}_\xi (\xi^\gamma\hat{\mathrm{R}}_k)\|+|\pt^{2\alpha}_\xi(\xi^\gamma\xi \hat{\Omega}_k)|)d\xi\\
 \le& C\delta^{-1-2|\alpha|}e^{2\delta t}\|g_0\|.
\emas
 This proves the lemma.
\end{proof}

\begin{thm}\label{l-high2}Let $G_H(t,x)$ be the high frequency part defined by \eqref{L-R}. 
For any  $n\ge0$ and $\alpha\in\mathbb{N}^3$, there exists a constant $0<\eta_0 <1/4$ such that
\be
\|\dxa [G_H(t,x)-W_k(t,x)]\|
\le Ce^{-\eta_0 t}(1+|x|^2)^{-n},\label{x5}
\ee
for $k\ge 6+3|\alpha|/2$ and $C>0$ a constant dependent of $n$ and $\alpha$, where $W_k(t,x)$ is the singular kinetic wave defined by \eqref{w_i}.
\end{thm}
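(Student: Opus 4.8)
Following the pattern already used for the low‑frequency part in the proof of Theorem~\ref{low3}, the plan is to combine two complementary estimates for the remaining term $R_k=G_H-W_k$ defined by \eqref{W-R}: one that is sharp in the spatial variable but grows exponentially in $t$, and one that is uniform in $x$ but decays exponentially in $t$, and then to interpolate between them. The first is immediate from Lemma~\ref{W_2}: for every $N\ge1$, every $k\ge 6+3|\alpha|/2$ and every $\delta\in(0,\delta_0)$,
\bmas
\|\dxa R_k(t,x)\|\le C\delta^{-1}e^{2\delta t}(1+|\delta x|^2)^{-N}.
\emas
The second ingredient I would establish is the uniform bound $\|\dxa R_k(t,x)\|\le Ce^{-\beta_2 t}$ for some $\beta_2>0$, with a constant independent of $x$.

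To prove this uniform bound I would work on the Fourier side. Since $\hat R_k$ is supported in $\{|\xi|\ge R\}$, where the norms $\|\cdot\|_\xi$ and $\|\cdot\|$ are equivalent, Lemma~\ref{l-1} gives $\|\hat G_H(t,\xi)\|\le Ce^{-\beta_1 t}$, while summing the estimates of Lemma~\ref{W_1} (with the index there set to $0$) gives $\|\hat W_k(t,\xi)\|=\big\|\sum_{j=0}^{k}\hat I_j(t,\xi)\big\|\le C(1+t^{k})e^{-2t}$ for $t>1$ and $\le C$ for $0<t\le1$; hence $\|\hat R_k(t,\xi)\|\le Ce^{-\beta_3 t}$ for some $\beta_3\in(0,\beta_1)$, but with no decay in $\xi$. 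On the other hand, the energy estimate \eqref{r6a} from the proof of Lemma~\ref{W_2} yields $\|\hat R_k(t,\xi)\|\le C\delta^{-1}(1+|\xi|)^{-4-l}e^{\delta t}$ whenever $k\ge 6+3l/2$, which for the admissible choice $l=|\alpha|$ is consistent with $k\ge 6+3|\alpha|/2$. Interpolating these two bounds with a weight $\theta$ taken close enough to $1$ that $(1-\theta)(4+|\alpha|)>3+|\alpha|$, and then choosing $\delta$ so small that $(1-\theta)\delta<\theta\beta_3$, makes $|\xi|^{|\alpha|}\|\hat R_k(t,\xi)\|$ integrable over $\{|\xi|\ge R\}$ with an overall factor $e^{-\beta_2 t}$, $\beta_2>0$; integrating in $\xi$ then gives the asserted uniform estimate.

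With the two bounds in hand, I would take $N=3n$ in the first one and interpolate geometrically:
\bmas
\|\dxa R_k(t,x)\|
&\le\(Ce^{-\beta_2 t}\)^{2/3}\(C\delta^{-1}e^{2\delta t}(1+|\delta x|^2)^{-3n}\)^{1/3}\\
&\le C\delta^{-1/3}e^{-\frac{2\beta_2}{3}t+\frac{2\delta}{3}t}(1+|\delta x|^2)^{-n}.
\emas
Fixing $\delta$ small enough that $\delta<\beta_2$, choosing a constant $\eta_0$ with $0<\eta_0\le\min\{\frac23(\beta_2-\delta),\frac18\}$ (so in particular $\eta_0<1/4$), and then using $(1+|\delta x|^2)^{-n}\le\delta^{-2n}(1+|x|^2)^{-n}$ together with the fact that $\delta$ is now a fixed constant, one obtains $\|\dxa R_k(t,x)\|\le Ce^{-\eta_0 t}(1+|x|^2)^{-n}$, which is \eqref{x5}, with $k\ge 6+3|\alpha|/2$.

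The only genuinely nontrivial step is the uniform‑in‑$x$ bound $\|\dxa R_k\|\le Ce^{-\beta_2 t}$. Unlike in the low‑frequency case of Theorem~\ref{low3}, the Fourier support $\{|\xi|\ge R\}$ is unbounded, so decay in $\xi$ is needed for integrability; and one cannot estimate $G_H$ and $W_k$ separately in physical space because $\hat G_H\notin L^1(\R^3_\xi)$ — that is the very reason for extracting the singular kinetic waves $W_k$. Hence the $\xi$‑decay of $\hat R_k$ must be borrowed from the energy estimate of the smooth remainder and then balanced against the spectral time decay by interpolation; once this is done, everything else is routine bookkeeping of the exponents and of the powers of $(1+|x|^2)$.
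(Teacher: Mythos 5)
Your proposal is correct, and its overall skeleton (a spatially weighted bound that grows like $e^{2\delta t}$ from Lemma~\ref{W_2}, a uniform-in-$x$ bound that decays exponentially in $t$, and a geometric interpolation between the two) is exactly the paper's. Where you genuinely diverge is in how the uniform-in-$x$ bound is produced. The paper does not estimate $\hat G_H$ and $\hat W_k$ separately: it uses the Duhamel representation of $\hat R_k$ coming from \eqref{er}, namely $\hat R_k(t)=\int_0^t \hat G_H(t-s)\bigl(2\hat I_k+i(v\cdot\xi)\sqrt{M}\hat E_k\bigr)(s)\,ds$, and combines the semigroup decay of Lemma~\ref{l-1} with the source decay \eqref{ie} to get, in one stroke, $\|\hat R_k(t,\xi)\|_\xi\le Ce^{-\beta_1 t}(1+|\xi|)^{-4-|\alpha|}$, hence \eqref{x3} and the clean rate $\eta_0=\beta_1/2$, independent of $\alpha$. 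You instead use the triangle inequality $\|\hat R_k\|\le\|\hat G_H\|+\|\hat W_k\|$ (which carries no $\xi$-decay) and borrow the $\xi$-decay from the interior energy estimate \eqref{r6a} by a Fourier-side interpolation; this is legitimate and avoids invoking the action of $\hat G_H$ on the source altogether, but the price is that your interpolation weight must satisfy $(1-\theta)(4+|\alpha|)>3+|\alpha|$, i.e.\ the weight on the time-decaying factor must be \emph{small} ($\theta<1/(4+|\alpha|)$) --- your phrase ``$\theta$ taken close enough to $1$'' is a slip, since the inequality you wrote forces $\theta$ close to $0$ --- and consequently your $\eta_0$ is much smaller than $\beta_1/2$ and depends on $|\alpha|$. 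That is still admissible under the theorem's quantifiers (``for any $n,\alpha$ there exists $\eta_0$''), but it is strictly weaker than what the paper's Duhamel argument delivers. The remaining bookkeeping (choice $l=|\alpha|$ consistent with $k\ge 6+3|\alpha|/2$, the norm equivalence of $\|\cdot\|_\xi$ and $\|\cdot\|$ on $\{|\xi|\ge R\}$, the $2/3$--$1/3$ split with $N=3n$, and $(1+|\delta x|^2)^{-n}\le\delta^{-2n}(1+|x|^2)^{-n}$) is accurate.
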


\begin{proof}
By  \eqref{W-R}, we have
\be
G_H(t,x)=W_k(t,x)+R_k(t,x). \label{x1}
\ee
From \eqref{er}, \eqref{ie} and Lemma \ref{l-1}, we can obtain that for $k\ge 6+3|\alpha|/2$,
\bmas
\|\hat{R}_k(t,\xi)\|_{\xi}
&\le \intt \left\|\hat{G}_H(t-s)(2\hat{I}_{k}+i (v\cdot\xi) \sqrt{M}  \hat{E}_{k})(s)\right\|_\xi ds\\
&\le C\intt e^{-\beta_1(t-s)}(\|\hat{I}_{k}(s)\|_{\xi} +|\xi| |\hat{E}_{k}(s)|)ds\\
&\le Ce^{-\beta_1t}(1+|\xi|)^{-4-|\alpha|},
\emas
 which gives
\be
\|\dxa R_k(t,x)\|+|\dxa \Tdx\phi_k(t,x)|\le Ce^{-\beta_1  t}. \label{x3}
\ee
It follows from \eqref{x1},  \eqref{x3} and \eqref{x4} that
$$
\|\dxa [G_H(t,x)-W_k(t,x)]\|
\le Ce^{-\eta_0 t}(1+|x|^2)^{-n},
$$
where $\eta_0=\beta_1/2\in (0,1/4)$, which lead to \eqref{x5}.
\end{proof}

 Theorem \ref{green1} directly follows from Theorem \ref{l-high2} and Theorem \ref{low3}.

\section{The nonlinear system}
\setcounter{equation}{0}
\label{sect4}
 In this section, we prove Theorem \ref{thm1} on the pointwise behaviors of the global solution to the nonlinear VPFP  system with the help of the estimates of the Green's function given in Sections 3.

\begin{lem} \label{S_1}
Let $\gamma,n\ge 0$  and $\alpha\in \mathbb{N}^3$. If the function $g_0(x,v)$  satisfies
$$
 | g_0(x,v)|\le C(1+|x|^2)^{-n}(1+|v|)^{-\gamma},
$$
then we have
\bma
|G_1(t)\ast g_0(x,v)|&\le Ce^{-t}(1+|x|^2)^{-n}(1+|v|)^{-\gamma}, \label{wt1}\\
|\Tdv G_1(t)\ast g_0(x,v)|&\le Ct^{-\frac12}e^{-t}(1+|x|^2)^{-n}(1+|v|)^{-\gamma}, \label{wt2}
\ema
where $G_1$ is defined by \eqref{G11}. In addition,
\be
|W_{\alpha}(t)\ast g_0(x,v)|\le Ce^{-t}(1+|x|^2)^{-n}(1+|v|)^{-\gamma}, \label{wt3}
\ee
where  $W_{\alpha}$ is defined by \eqref{wa}.
\end{lem}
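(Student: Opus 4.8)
The engine of the proof is the explicit kernel \eqref{G11}. I would write its exponent as $-a|z|^{2}-c|w|^{2}-\kappa|v|^{2}-2t$, where $z=x-y-\theta_{1}(v+u)$, $w=\theta v-u$, and $\theta_{1}=\tfrac{1-e^{-t}}{1+e^{-t}}$, $\theta=\tfrac{2e^{-t}}{1+e^{-2t}}$, $\kappa=\tfrac{1-e^{-2t}}{4(1+e^{-2t})}$, $a=\tfrac{1-e^{-2t}}{8D(t)}$, $c=\tfrac{1+e^{-2t}}{4(1-e^{-2t})}$. For fixed $(x,v)$ the map $(y,u)\mapsto(z,w)$ is linear with unit Jacobian, $x-y=z+\theta_{1}(1+\theta)v-\theta_{1}w$ and $u=\theta v-w$. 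Bounding $|g_{0}(y,u)|\le C(1+|y|^{2})^{-n}(1+|u|)^{-\gamma}$ and transferring the decays by the Peetre-type inequalities $(1+|y|^{2})^{-n}\le C(1+|x|^{2})^{-n}(1+|x-y|^{2})^{n}$ and $(1+|u|)^{-\gamma}=(1+|\theta v-w|)^{-\gamma}\le C(1+|\theta v|)^{-\gamma}(1+|w|)^{\gamma}$, together with $|x-y|^{2}\le 3(|z|^{2}+4\theta_{1}^{2}|v|^{2}+\theta_{1}^{2}|w|^{2})$ and $\theta_{1},\theta\le 1$, the $z$-integral reduces to $\int e^{-a|z|^{2}}(1+|z|^{2})^{n}\,dz\le Ca^{-3/2}(1+a^{-1})^{n}$ and the $w$-integral to $\int e^{-c|w|^{2}}(1+|w|^{2})^{n+\gamma/2}\,dw\le Cc^{-3/2}$ (using $c^{-1}\le 4$). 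Since the prefactor in \eqref{G11} is $CD(t)^{-3/2}$, $a^{-1}=\tfrac{8D(t)}{1-e^{-2t}}$, and $c^{-3/2}\le C(1-e^{-2t})^{3/2}$, all powers of $D(t)$ and of $1-e^{-2t}$ cancel and, by \eqref{Dt}, $(1+a^{-1})^{n}\le C(1+t)^{n}$; one is left with
$$|G_{1}(t)\ast g_{0}(x,v)|\le Ce^{-2t}(1+t)^{n}(1+|x|^{2})^{-n}(1+4\theta_{1}^{2}|v|^{2})^{n}(1+|\theta v|)^{-\gamma}e^{-\kappa|v|^{2}}.$$

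To absorb the velocity factors I split $e^{-\kappa|v|^{2}}$ into three equal pieces and use: (i) $\theta_{1}(t)^{2}\le C\kappa(t)$ for all $t>0$, elementary since $\theta_{1}^{2}/\kappa=4\tanh^{2}(t/2)/\tanh t$ is continuous on $[0,\infty]$ with boundary values $0$ and $4$, whence $(1+4\theta_{1}^{2}|v|^{2})^{n}e^{-\kappa|v|^{2}/3}\le C$; and (ii) $e^{-\kappa|v|^{2}/3}(1+|\theta v|)^{-\gamma}\le C(1+|v|)^{-\gamma}$, by a dichotomy: $\theta$ decreases and $\kappa$ increases in $t$, so either $t\le T_{0}$ and $\theta(t)\ge\theta(T_{0})>0$, giving $(1+|\theta v|)^{-\gamma}\le\theta(T_{0})^{-\gamma}(1+|v|)^{-\gamma}$, or $t>T_{0}$ and $\kappa(t)\ge\kappa(T_{0})>0$, giving $e^{-\kappa|v|^{2}/3}\le C(1+|v|)^{-\gamma}$. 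As $e^{-2t}(1+t)^{n}\le Ce^{-t}$, this proves \eqref{wt1}. For \eqref{wt2} I differentiate \eqref{G11}: $\Tdv G_{1}=(2a\theta_{1}z-2c\theta w-2\kappa v)G_{1}$, so after the same change of variables the integrand carries the extra weight $2a\theta_{1}|z|+2c\theta|w|+2\kappa|v|$. The third term is harmless ($2\kappa|v|\,e^{-\kappa|v|^{2}/3}\le C$), while $\int e^{-a|z|^{2}}|z|(1+|z|^{2})^{n}\,dz\le Ca^{-2}(1+a^{-1})^{n}$ and $\int e^{-c|w|^{2}}|w|(1+|w|^{2})^{n+\gamma/2}\,dw\le Cc^{-2}$ only cost, relative to \eqref{wt1}, the factors $\theta_{1}\sqrt{a}$ and $\theta\sqrt{c}$; by \eqref{Dt} both are $O(t^{-1/2})$ as $t\to0$ and bounded for $t\ge1$, which produces exactly the $t^{-1/2}$ of \eqref{wt2}.

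For \eqref{wt3} I would first identify $W_{\alpha}$ with a high-frequency partial sum of Section~\ref{sect3}. Comparing \eqref{wa} with \eqref{c-w}--\eqref{c-w1}, and noting that $2\hat I_{k-1}+i(v\cdot\xi)\sqrt{M}\,\hat E_{k-1}$ is the Fourier transform of $(2+v\cdot\Tdx\Delta_{x}^{-1}P_{0})I_{k-1}$, while $\chi_{R}(D)$ is an $x$-Fourier multiplier commuting with $e^{tA}$ and with $v\cdot\Tdx\Delta_{x}^{-1}P_{0}$, one gets $I_{k}=\chi_{R}(D)J_{k}$ for all $k$, hence $W_{\alpha}(t)=\sum_{k=0}^{K}I_{k}(t)=W_{K}(t)$ with $K=7+[3|\alpha|/2]$ in the notation \eqref{w_i}. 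I then prove by induction that $|I_{k}(t)\ast g_{0}(x,v)|\le C_{k}(1+t)^{N_{k}}e^{-2t}(1+|x|^{2})^{-n}(1+|v|)^{-\gamma}$ with $N_{k}=(k+1)n+k$. For $k=0$, $I_{0}(t)\ast g_{0}=\chi_{R}(D)\bigl(G_{1}(t)\ast g_{0}\bigr)$ and $\chi_{R}(D)h=h-\phi_{0}\ast h$ with $\phi_{0}$ Schwartz, so \eqref{wt1} plus a Peetre estimate gives the claim; for $k\ge1$,
$$I_{k}(t)\ast g_{0}=\int_{0}^{t}G_{1}(t-s)\ast\bigl[(2+v\cdot\Tdx\Delta_{x}^{-1}P_{0})(I_{k-1}(s)\ast g_{0})\bigr]\,ds,$$
and since $\widehat{I_{k-1}\ast g_{0}}$ is supported in $\{|\xi|\ge R\}$ by Lemma~\ref{W_1}, $v\cdot\Tdx\Delta_{x}^{-1}P_{0}$ restricted there is multiplication by $v\sqrt{M(v)}$ composed with an $x$-convolution whose kernel is $O(|x|^{-2})$ near the origin (hence locally integrable on $\R^{3}$) and rapidly decreasing at infinity; together with $|v|\sqrt{M(v)}\le C(1+|v|)^{-\gamma}$ this sends the inductive bound to one of the same form. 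Inserting it into \eqref{wt1} (with the weighted sup-norm of $h$ in place of the constant) and using $\int_{0}^{t}e^{-2(t-s)}(1+t-s)^{n}(1+s)^{N_{k-1}}e^{-2s}\,ds\le C(1+t)^{n+N_{k-1}+1}e^{-2t}$ closes the induction. Summing over $k=0,\dots,K$ and using $(1+t)^{M}e^{-2t}\le Ce^{-t}$ yields \eqref{wt3}.

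The main obstacle is the uniform-in-$t$ bookkeeping underlying \eqref{wt1}--\eqref{wt2}: the kernel prefactor $D(t)^{-3/2}$ and the widths of the Gaussians all degenerate as $t\to0$, so their exact cancellation after the $z$- and $w$-integrations must be tracked, while the polynomial growth that surfaces as $t\to\infty$ has to be absorbed into $e^{-2t}$; and the convolution inevitably produces $(1+|\theta(t)v|)^{-\gamma}$ rather than $(1+|v|)^{-\gamma}$, a mismatch that is repaired only by combining the structural inequality $\theta_{1}^{2}\lesssim\kappa$ with the $\theta$-versus-$\kappa$ dichotomy. A secondary point, in \eqref{wt3}, is that $\Tdx\Delta_{x}^{-1}$ is non-local --- which is exactly why one must pass to the high-frequency iterates $I_{k}=\chi_{R}(D)J_{k}$, whose Fourier support keeps that operator's kernel well behaved.
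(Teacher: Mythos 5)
Your proof is correct and follows essentially the same route as the paper: exploit the explicit Gaussian kernel \eqref{G11} to integrate in $(y,u)$, absorb the resulting velocity shifts $(1+|\theta v|)^{-\gamma}$ and the $\theta_1$-dependent weights into the factor $e^{-\kappa|v|^2}$ via $t$-dichotomies, obtain the $t^{-1/2}$ in \eqref{wt2} from the extra $2a\theta_1|z|+2c\theta|w|+2\kappa|v|$ weight, and prove \eqref{wt3} by writing $\chi_R(D)$ as identity plus a rapidly decaying kernel, controlling $\Tdx\Delta_x^{-1}$ on high frequencies through a kernel that is $O(|x|^{-2})$ near the origin and rapidly decaying, and inducting on the iterates $\chi_R(D)J_k$ with the time-convolution estimate. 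The only differences are cosmetic: you use the change of variables $(y,u)\mapsto(z,w)$ with Peetre inequalities where the paper integrates first in $y$ and then in $u$ via Cauchy--Schwarz with region splittings, yielding the same intermediate bound $C(1+t)^ne^{-2t}(1+|x|^2)^{-n}(1+|v|)^{-\gamma}e^{-c\kappa|v|^2}$.
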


\begin{proof}First, we deal with \eqref{wt1} and \eqref{wt2}.
Since it follows from \eqref{Dt} that
\bmas
&\frac1{D(t)^{3/2}}\intr e^{-\frac{1-e^{-2t}}{8D(t)}\left|x-y-(v+u)\frac{1-e^{-t}}{1+e^{-t}}\right|^2 } (1+|y|^2)^{-n}dy\\
=&\frac1{D(t)^{3/2}}\(\int_{|y|\le y_0}+\int_{|y|\ge y_0}\)e^{-\frac{1-e^{-2t}}{8D(t)}\left|x-y-(v+u)\frac{1-e^{-t}}{1+e^{-t}}\right|^2 } (1+|y|^2)^{-n}dy\\
\le &C\frac{1}{(1-e^{-2t})^{3/2}}\bigg(e^{-\frac{1-e^{-2t}}{64D(t)}\left|x-(v+u)\frac{1-e^{-t}}{1+e^{-t}}\right|^2 }+\bigg(1+\left|x-(v+u)\frac{1-e^{-t}}{1+e^{-t}}\right|^2\bigg)^{-n}\bigg)\\
\le &C\frac{1}{(1-e^{-2t})^{3/2}}(1+t)^n\bigg(1+\left|x-(v+u)\frac{1-e^{-t}}{1+e^{-t}}\right|^2\bigg)^{-n},
\emas
where $y_0=\frac12\left|x-(v+u)\frac{1-e^{-t}}{1+e^{-t}}\right| $, it follows that
\bma
& \left|\intrr G_1(t,x,v;y,u)g_0(y,u)dydu\right|\nnm\\
\le& C e^{-2t}\frac1{D(t)^{3/2}}\intr\(\intr e^{-\frac{1-e^{-2t}}{8D(t)}\left|x-y-(v+u)\frac{1-e^{-t}}{1+e^{-t}}\right|^2 } (1+|y|^2)^{-n}dy\)\nnm\\
 &e^{-\frac{1-e^{-2t}}{4(1+e^{-2t})}|v|^2 }e^{-\frac{1+e^{-2t}}{4(1-e^{-2t})} \left|v\frac{2e^{-t}}{1+e^{-2t}}-u\right|^2 }(1+|u|)^{-\gamma}du\nnm\\
\le & C(1+t)^n\frac{e^{-2t}}{(1-e^{-2t})^{3/2}}e^{-\frac{1-e^{-2t}}{4(1+e^{-2t})}|v|^2 }\intr e^{-\frac{1+e^{-2t}}{4(1-e^{-2t})}\left|v\frac{2e^{-t}}{1+e^{-2t}}-u\right|^2 } \nnm\\
&\bigg(1+\left|x-(v+u)\frac{1-e^{-t}}{1+e^{-t}}\right|^2\bigg)^{-n}(1+|u|)^{-\gamma}du. \label{G_1}
\ema

By changing variable $v\frac{2e^{-t}}{1+e^{-2t}}-u\to z$, we obtain
\bma
 &\intr e^{-\frac{1+e^{-2t}}{4(1-e^{-2t})}\left|v\frac{2e^{-t}}{1+e^{-2t}}-u\right|^2 } \bigg(1+\left|x-(v+u)\frac{1-e^{-t}}{1+e^{-t}}\right|^2\bigg)^{-2n}du\nnm\\
=  &\intr e^{-\frac{1+e^{-2t}}{4(1-e^{-2t})} |z |^2 } \bigg(1+\left|x-v\frac{1-e^{-2t}}{1+e^{-2t}}+z\frac{1-e^{-t}}{1+e^{-t}}\right|^2\bigg)^{-2n}dz\nnm\\
= &\(\int_{|z|\le z_0}+\int_{|z|\ge z_0}\)e^{-\frac{1+e^{-2t}}{4(1-e^{-2t})} |z |^2 } \bigg(1+\left|x-v\frac{1-e^{-2t}}{1+e^{-2t}}+z\frac{1-e^{-t}}{1+e^{-t}}\right|^2\bigg)^{-2n}dz\nnm\\
\le &C\(\frac{1-e^{-2t}}{1+e^{-2t}}\)^{3/2} \bigg(1+\left|x-v\frac{1-e^{-2t}}{1+e^{-2t}}\right|^2\bigg)^{-2n}, \label{G_2}
\ema
where  $z_0=\frac{1+e^{-t}}{2(1-e^{-t})}\left|x-v\frac{1-e^{-2t}}{1+e^{-2t}}\right| $.
Moreover,
\bma
 &\intr e^{-\frac{1+e^{-2t}}{4(1-e^{-2t})}\left|v\frac{2e^{-t}}{1+e^{-2t}}-u\right|^2 } (1+|u|)^{-2\gamma}du\nnm\\
=  &\(\int_{|u|\le u_0}+\int_{|u|\ge u_0}\)e^{-\frac{1+e^{-2t}}{4(1-e^{-2t})}\left|v\frac{2e^{-t}}{1+e^{-2t}}-u\right|^2 } (1+|u|)^{-2\gamma}du\nnm\\
\le& C\(\frac{1-e^{-2t}}{1+e^{-2t}}\)^{3/2}\bigg(1+\left|v\frac{e^{-t}}{1+e^{-2t}}\right|\bigg)^{-2\gamma},\quad  u_0= \left|v\frac{e^{-t}}{1+e^{-2t}}\right|.\label{G_3d}
\ema
Thus, it follows from \eqref{G_1}--\eqref{G_3d} that
\bma
& \left|\intrr G_1(t,x,v;y,u)g_0(y,u)dudy\right|\nnm\\
\le &C(1+t)^n\frac{e^{-2t}}{(1-e^{-2t})^{3/2}}e^{-\frac{1-e^{-2t}}{4(1+e^{-2t})}|v|^2 }\(\intr e^{-\frac{1+e^{-2t}}{4(1-e^{-2t})}\left|v\frac{2e^{-t}}{1+e^{-2t}}-u\right|^2 } (1+|u|)^{-2\gamma}du\)^{1/2}\nnm\\
&\(\intr e^{-\frac{1+e^{-2t}}{4(1-e^{-2t})}\left|v\frac{2e^{-t}}{1+e^{-2t}}-u\right|^2 } \bigg(1+\left|x-(v+u)\frac{1-e^{-t}}{1+e^{-t}}\right|^2\bigg)^{-2n}du\)^{1/2}\nnm\\
\le &C(1+t)^ne^{-2t}\bigg(1+\left|x-v\frac{1-e^{-2t}}{1+e^{-2t}}\right|^2\bigg)^{-n}\bigg(1+\left|v\frac{e^{-t}}{1+e^{-2t}}\right|\bigg)^{-\gamma}e^{-\frac{1-e^{-2t}}{4(1+e^{-2t})}|v|^2 }. \label{G2}
\ema
Let
\bmas
I_1&=\bigg(1+\left|v\frac{e^{-t}}{1+e^{-2t}}\right|\bigg)^{-\gamma}e^{-\frac{1-e^{-2t}}{16(1+e^{-2t})}|v|^2 },\\
I_2&=\bigg(1+\left|x-v\frac{1-e^{-2t}}{1+e^{-2t}}\right|^2\bigg)^{-n}e^{-\frac{1-e^{-2t}}{16(1+e^{-2t})}|v|^2 }.
\emas
It holds that $I_1\le C(1+|v|)^{-\gamma} $ for $t<1$ and $I_1\le e^{- c|v|^2 } $ for $t\ge 1$, and $I_2\le C(1+|x|^2)^{-n}$ for $|x|\ge 2|v|\frac{1-e^{-2t}}{1+e^{-2t}}$ and
$I_2\le e^{-\frac{1+e^{-2t}}{64(1-e^{-2t})}|x|^2}\le C(1+|x|^2)^{-n}$ for $|x|\le 2|v|\frac{1-e^{-2t}}{1+e^{-2t}}$, namely,
\be I_1\le C(1+|v|)^{-\gamma},\quad I_2\le C(1+|x|^2)^{-n}. \label{G3}\ee
Thus, it follows from \eqref{G2}--\eqref{G3} that
\be
|G_1(t)\ast g_0(x,v)|\le C(1+t)^ne^{-2t}(1+|x|)^{-n}(1+|v|)^{-\gamma}e^{-\frac{1-e^{-2t}}{8(1+e^{-2t})}|v|^2 }. \label{G4}
\ee

Noting that
\bma
|\Tdv G_1(t,x,v;y,u)|=&\bigg|-\frac{1-e^{-2t}}{2(1+e^{-2t})}v+\(x-y-(v+u)\frac{1-e^{-t}}{1+e^{-t}}\)\frac{(1-e^{-t})^2}{4D(t)}\nnm\\
&-\frac{e^{-t}}{1-e^{-2t}}\(v\frac{2e^{-t}}{1+e^{-2t}}-u\)\bigg|G_1(t,x,v;y,u)\nnm\\
\le &C\bigg(\sqrt{\frac{1-e^{-2t}}{1+e^{-2t}}}+\sqrt{\frac{(1-e^{-t})^{3}}{D(t)}}+ \frac{e^{-t}}{\sqrt{1-e^{-4t}}}\bigg)e^{-2t}\nnm\\
&e^{-\frac{1-e^{-2t}}{16D(t)}\left|x-y-(v+u)\frac{1-e^{-t}}{1+e^{-t}}\right|^2 }e^{-\frac{1-e^{-2t}}{8(1+e^{-2t})}|v|^2-\frac{1+e^{-2t}}{8(1-e^{-2t})} \left|v\frac{2e^{-t}}{1+e^{-2t}}-u\right|^2 } , \label{G14}
\ema
we obtain by a similar argument as \eqref{G4} that
\be
|\Tdv  G_1(t)\ast g_0(x,v)|\le C(1+t^{-\frac12})(1+t)^{n}e^{-2t}(1+|x|)^{-n}(1+|v|)^{-\gamma}e^{-\frac{1-e^{-2t}}{16(1+e^{-2t})}|v|^2 }, \label{G5}
\ee
where we have used \eqref{Dt}.

Next, we prove \eqref{wt3} as follows. By \eqref{wa}, we have
$$W_{\alpha}(t)\ast g_0=\sum^{7+[3|\alpha|/2]}_{k=0}\chi_R(D)J_k(t,x,v),$$
where
\bmas
J_0(t,x,v)&= G_1(t)\ast g_0= \intr G_1(x-y,v,t;u)g_0(y,u)dudy, \\
J_k(t,x,v)&= \intt G_1(t-s)\ast (2+v\cdot\Tdx\Delta^{-1}_xP_0) J_{k-1}ds,\,\,\, k\ge 1.
\emas

To estimate $\chi_R(D) $, we decompose
\bma
\chi_R(D) &=\intr e^{i x\cdot\xi} d\xi+\intr e^{i x\cdot\xi} \chi_1(\xi)d\xi\nnm\\
&= \delta(x)+ F(x). \label{J_1c}
\ema
Since $\chi_1(\xi)$ is a smooth and compact supported function, it follows that for any  $k\ge 0$, 
$$|F(x)|\le C(1+|x|^2)^{-k}.$$
Combining   \eqref{G4} and \eqref{J_1c}, we obtain
\bma
|\chi_R(D)J_0(t,x,v)|\le& |J_0(t,x,v)|+|F(x)\ast J_0(t,x,v)|\nnm\\
\le& C(1+t)^ne^{-2t}(1+|v|)^{-\gamma}e^{-\frac{1-e^{-2t}}{8(1+e^{-2t})}|v|^2 }\nnm\\
&\bigg((1+|x|^2)^{-n} +\intr (1+|x-y|^2)^{-n-2} (1+|y|^2)^{-n}dy\bigg)\nnm\\
\le& Ce^{-t} (1+|x|^2)^{-n}(1+|v|)^{-\gamma}e^{-\frac{1-e^{-2t}}{8(1+e^{-2t})}|v|^2 }. \label{J_3}
\ema

Since
$$\left|\intr e^{i x\cdot\xi}\frac{\xi_j}{|\xi|^2}\chi_2(\xi)d\xi\right|\le  C\frac1{|x|^2}(1+|x|^2)^{-k},\quad j=1,2,3,$$
for any $k\ge 0$, we have
\bma
&\quad|\chi_R(D)\Tdx\Delta^{-1}_x(J_0(t,x),\sqrt{M})|\nnm\\
&\le C(1+t)^ne^{-2t} \(\int_{|x-y|\le \frac12}+\int_{|x-y|> \frac12}\) \frac{1}{|x-y|^2}(1+|x-y|^2)^{-n-2}(1+|y|^2)^{-n}dy\nnm \\
&=:C(1+t)^ne^{-2t}(I_3+I_4).\label{J_4}
\ema
Note that
\bma
I_3&\le C(1+|x|^2)^{-n}\int_{|x-y|\le \frac12} \frac{1}{|x-y|^2}(1+|x-y|^2)^{-n-2}dy \nnm\\
&\le C(1+|x|^2)^{-n}, \label{J_4a}\\
I_4&\le C\int_{|x-y|> \frac12} (1+|x-y|^2)^{-n-2}(1+|y|^2)^{-n}dy \nnm\\
&\le C(1+|x|^2)^{-n}. \label{J_4b}
\ema
Combining \eqref{J_4}--\eqref{J_4b}, we have
\be
|\chi_R(D)\Tdx\Delta^{-1}_x(J_0(t,x),\sqrt{M})|\le C(1+t)^ne^{-2t}(1+|x|^2)^{-n}. \label{J_4c}
\ee
Thus, it follows from \eqref{J_3}, \eqref{J_4c} and \eqref{G4} that
\bmas
|\chi_R(D)J_1(t,x,v)|=&   \left|\chi_R(D)\intt  G_1(s)\ast (2J_0+v\cdot\Tdx\Delta^{-1}_xP_0J_0)(t-s)ds\right|\\
\le &C e^{-t}\intt\intrr e^{s} G_1(s,x,v;y,u) (1+|y|)^{-n}(1+|u|)^{-\gamma}  dyduds \\
\le & Ce^{-t}\intt e^{-s}(1+s)^n(1+|x|)^{-n}(1+|v|)^{-\gamma} ds\\
\le &Ce^{-t} (1+|x|)^{-n}(1+|v|)^{-\gamma}.
\emas

By a similar argument as above, we obtain
$$
|\chi_R(D)J_k(t,x,v)|\le Ce^{-t} (1+|x|)^{-n}(1+|v|)^{-\gamma},\quad \forall k\ge 2,
$$
 which proves \eqref{wt3}. The proof of the lemma is completed.
\end{proof}

\begin{lem}\label{green3a}Let $\gamma,n\ge 0$. If the functions $F_i(t,x,v)$, $i=0,1,2$  satisfy
\bmas
 | F_k(t,x,v)|&\le Ct^{-\frac k2}e^{-bt}(1+|x|^2)^{-n}(1+|v|)^{-\gamma} ,\quad k=0,1,\\
 \| F_2(t,x)\|_{L^2_v}&\le Ce^{-bt}(1+|x|^2)^{-n},
\emas
with $0<b< 2,$ then we have
\bma
\bigg|\intt  G_1(t-s)\ast F_k(s,x,v)ds\bigg|&\le Ce^{-bt}(1+|x|^2)^{-n}(1+|v|)^{-\gamma-1}, \label{s4d}\\
\bigg|\intt \Tdv G_1(t-s)\ast F_k(s,x,v)ds\bigg|&\le C e^{-bt}(1+|x|^2)^{-n}(1+|v|)^{-\gamma}, \label{s4e}\\
\bigg|\intt  G_1(t-s)\ast F_2(s,x,v)ds\bigg|&\le Ce^{-bt}(1+|x|^2)^{-n}, \label{s4}\\
\bigg|\intt  W_{\alpha}(t-s)\ast F_k(s,x,v)ds\bigg|&\le Ce^{-bt}(1+|x|^2)^{-n}(1+|v|)^{-\gamma-1}, \label{s4f}
\ema
where $k=0,1$ and $C>0$ is a constant.
\end{lem}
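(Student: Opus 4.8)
\emph{Overall approach.} All four estimates will follow by the same device: freeze the time variable $s$, treat $F_k(s,\cdot,\cdot)$ (resp.\ $F_2(s,\cdot,\cdot)$) as an initial datum, apply to it the \emph{pointwise} kernel bounds already established inside the proof of Lemma~\ref{S_1} --- namely \eqref{G4} for $G_1(t-s)\ast$, \eqref{G5} for $\Tdv G_1(t-s)\ast$, and \eqref{J_3}--\eqref{J_4c} for the $\chi_R(D)$-part entering \eqref{s4f} --- and then integrate in $s$. After the substitution $\tau=t-s$ and writing $e^{-bs}=e^{-bt}e^{b\tau}$, the right-hand side of each claim becomes $e^{-bt}$ times an integral of the form $\int_0^t(1+\tau)^{N}e^{-(2-b)\tau}(t-\tau)^{-k/2}(\cdots)\,d\tau$; since $b<2$, the factor $e^{b\tau}$ is dominated by the genuine decay $(1+\tau)^{N}e^{-2\tau}$ carried by $G_1$, the factor $(t-\tau)^{-k/2}$ (with $k=0,1$) is locally integrable, and for \eqref{s4e} the extra endpoint singularity $\tau^{-1/2}$ produced by \eqref{G5} is harmless because $\int_0^t\tau^{-1/2}(t-\tau)^{-1/2}\,d\tau=\pi$ is uniform in $t$. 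Discarding the Gaussian factors, this already proves \eqref{s4e}, and proves the weaker form of \eqref{s4d} with $(1+|v|)^{-\gamma}$ in place of $(1+|v|)^{-\gamma-1}$.

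\emph{The extra power of $(1+|v|)^{-1}$.} For \eqref{s4d} one keeps the Gaussian $e^{-\frac{1-e^{-2\tau}}{8(1+e^{-2\tau})}|v|^2}$ of \eqref{G4}: it is $\asymp e^{-c\tau|v|^2}$ for $0<\tau\le1$, whence $\int_0^1(\cdots)e^{-c\tau|v|^2}\,d\tau\lesssim(1+|v|)^{-2}$, while for $\tau\ge1$ it is $\lesssim e^{-c|v|^2}$; combined with the integrable factors above, the whole $\tau$-integral is $\le C(1+|v|)^{-1}$, which gives \eqref{s4d}. (The $v$-shift of the spatial center in \eqref{G4} is absorbed against this Gaussian by the $I_1$--$I_2$ argument \eqref{G3}.) Estimate \eqref{s4f} is then reduced to \eqref{s4d}: expand $W_\alpha=\sum_j\chi_R(D)J_j$ (finitely many terms, by \eqref{wa}) and $\chi_R(D)=\delta+F$ with $|F(x)|\le C_N(1+|x|^2)^{-N}$; the $j=0$ summand is exactly \eqref{s4d} up to a harmless extra $x$-convolution with $F$, and for $j\ge1$ one iterates the recursion $J_j(t-s)\ast F_k(s)=\int_0^{t-s}G_1(t-s-\sigma)\ast(2+v\cdot\Tdx\Delta^{-1}_xP_0)\big(J_{j-1}(\sigma)\ast F_k(s)\big)\,d\sigma$ and argues by finite induction on $j$. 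The three points making the induction close are: $\chi_R(D)$ commutes with $e^{\sigma A}$ and with $P_0$ in the frequency variable, so $\Tdx\Delta^{-1}_x$ always acts on a frequency-localized function and hence preserves spatial decay exactly as in \eqref{J_4c}; the operator $v\cdot\Tdx\Delta^{-1}_xP_0$ retains a Gaussian-type velocity weight (its $P_0$-factor is $\sqrt M$, and the prefactor $v$ only multiplies by a polynomial absorbed into $\sqrt M$); and each innermost time integral $\int_0^{t-s}G_1(t-s-\sigma)\ast(\cdot)\,d\sigma$ supplies one further $(1+|v|)^{-1}$, precisely as for \eqref{s4d}. A final integration in $s$ and the sum over $j$ then yield \eqref{s4f}.

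\emph{The $L^2_v$ datum.} Since $F_2$ is controlled only in $L^2_v$, one writes $|G_1(t-s)\ast F_2(s)(x,v)|\le\int_{\R^3}\|G_1(t-s,x,v;y,\cdot)\|_{L^2_u}\,\|F_2(s,y,\cdot)\|_{L^2_v}\,dy$ by Cauchy--Schwarz in $u$. The inner norm is read off the explicit formula \eqref{G11}: the two Gaussians in $u$, from the $|x-y-(v+u)\tfrac{1-e^{-\tau}}{1+e^{-\tau}}|^2$ term and the $|v\tfrac{2e^{-\tau}}{1+e^{-2\tau}}-u|^2$ term, collapse into a single Gaussian in $y$; using the short-time asymptotics $D(\tau)\sim\tfrac1{12}\tau^4$ of \eqref{Dt}, one finds for $0<\tau\le1$ that $\|G_1(\tau,x,v;y,\cdot)\|_{L^2_u}\le C\,\tau^{-21/4}e^{-2\tau}e^{-c\tau|v|^2}e^{-c\tau^{-3}|x-y-\kappa(\tau)v|^2}$ with $\kappa(\tau)\to0$ as $\tau\to0$ (and the usual heat-kernel behaviour for $\tau\ge1$). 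Integrating in $y$ against $(1+|y|^2)^{-n}$ brings in the Gaussian volume $\asymp\tau^{9/2}$ and, via \eqref{G3}, the weight $(1+|x|^2)^{-n}$, so the surviving short-time singularity is only $\tau^{-21/4+9/2}=\tau^{-3/4}$. Since $3/4<1$ and $b<2$, $\int_0^\infty\tau^{-3/4}(1+\tau)^{N}e^{-(2-b)\tau}\,d\tau<\infty$, and \eqref{s4} follows.

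\emph{Main obstacle.} The two genuinely delicate steps are (i) the computation in the $L^2_v$ case --- one must verify that the Gaussian widths in \eqref{G11} combine so that the $L^2_u\!\to\!L^\infty$ loss surviving the $y$-integration is the \emph{integrable} power $\tau^{-3/4}$ rather than something worse, which hinges on the exact exponents in \eqref{Dt}; and (ii) the commutator/induction bookkeeping behind \eqref{s4f} --- checking that $\chi_R(D)$ can be pushed through the kinetic semigroup $e^{\tau A}$ and that $v\cdot\Tdx\Delta^{-1}_xP_0$ damages neither the velocity weight nor the spatial decay, so that the induction on $j$ closes and the gain $(1+|v|)^{-1}$ is produced for every summand of $W_\alpha$.
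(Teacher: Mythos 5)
Your proposal is correct and follows essentially the same route as the paper: it reuses the kernel bounds \eqref{G4}--\eqref{G5} from the proof of Lemma \ref{S_1}, exploits $b<2$ to integrate the time singularities, gains the extra $(1+|v|)^{-1}$ from the short-time velocity Gaussian (the paper trades it pointwise as $e^{-c(1-e^{-2s})|v|^2}\le C(1+|v|)^{-1}(1-e^{-2s})^{-1/2}$, you integrate it in time---same mechanism), handles $F_2$ by Cauchy--Schwarz in $u$ with the explicit kernel \eqref{G11} leading to the same integrable $s^{-3/4}$ singularity as in \eqref{G6}, and reduces \eqref{s4f} to \eqref{s4d} through the structure of $W_\alpha$ exactly as in the proof of \eqref{wt3}.
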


\begin{proof} By \eqref{G4}, we have
\bmas
&\left|\intt  G_1(t-s)\ast F_k(s,x,v)ds\right|\\
\le& Ce^{-bt}\intt e^{-(2-b)s}(1+s)^n(1+|x|)^{-n}(1+|v|)^{-\gamma}e^{-\frac{1-e^{-2s}}{8(1+e^{-2s})}|v|^2 }(t-s)^{-\frac k2}ds\\
\le& Ce^{-bt}\intt e^{-(2-b)s}(1+s)^n(1+|x|)^{-n}(1+|v|)^{-\gamma-1}\frac{1}{\sqrt{1-e^{-2s}}}(t-s)^{-\frac k2}ds\\
\le& C e^{-bt}(1+|x|)^{-n}(1+|v|)^{-\gamma-1},\quad k=0,1.
\emas
By \eqref{G5}, we have
\bmas
&\left|\intt\Tdv  G_1(t-s)\ast F_k(s,x,v)ds\right|\\
\le& Ce^{-bt}\intt e^{-(2-b)s}(1+s^{-\frac12})(1+s)^{n}(1+|x|)^{-n}(1+|v|)^{-\gamma}(t-s)^{-\frac k2}ds\\
\le& Ce^{-bt}(1+|x|)^{-n}(1+|v|)^{-\gamma},\quad k=0,1.
\emas
Thus, we obtain \eqref{s4d} and \eqref{s4e}.

Finally, we want to show \eqref{s4}. By changing variable $v\frac{2e^{-t}}{1+e^{-2t}}-u\to z$, we obtain
\bma
&\intr  |G_1(t,x,v;y,u)|^2 du\nnm\\
= & C\frac{e^{-4t}}{D(t)^{3}}e^{-\frac{1-e^{-2t}}{2(1+e^{-2t})}|v|^2 }\intr  e^{-\frac{1-e^{-2t}}{4D(t)}\left|x-y-v\frac{1-e^{-2t}}{1+e^{-2t}}+z\frac{1-e^{-t}}{1+e^{-t}}\right|^2 }
 e^{-\frac{1+e^{-2t}}{2(1-e^{-2t})} |z|^2 } dz\nnm\\
\le &C \frac{e^{-4t}}{D(t)^{3}}e^{-\frac{1-e^{-2t}}{2(1+e^{-2t})}|v|^2 }\bigg\{ e^{-\frac{1-e^{-2t}}{16D(t)}\left|x-y-v\frac{1-e^{-2t}}{1+e^{-2t}} \right|^2 }\(\frac{1-e^{-2t}}{1+e^{-2t}}\)^{3/2}\nnm\\
&+e^{-\frac{(1+e^{-2t})(1+e^{-t})}{8(1-e^{-t})^3} \left|x-y-v\frac{1-e^{-2t}}{1+e^{-2t}}\right|^2}\(\frac{D(t)}{1-e^{-2t}}\)^{3/2}\(\frac{1+e^{-t}}{1-e^{-t}}\)^{3}\bigg\}. \label{G6}
\ema
By \eqref{Dt}, \eqref{G6} and a similar argument as \eqref{G4}, we obtain
\bmas
&\left|\intt  G_1(t-s)\ast F_2(s,x,v)ds\right|\\
\le &\intt\intr \|G_1(s,x,v;y)\|_{L^2_u} \|F_2(t-s,y)\|_{L^2_u}dyds\\
\le & Ce^{-bt}(1+|x|^2)^{-n}\intt e^{-(2-b)s}\bigg\{\frac{(1+s)^n}{(1-e^{-2s})^{\frac34}}+\[\frac{(1-e^{-s})^{3}}{D(s)}\]^{\frac34}\bigg\}ds\\
\le &C e^{-bt}(1+|x|^2)^{-n}.
\emas
Finally, by \eqref{s4d} and a similar argument as Lemma \ref{S_1}, we can prove \eqref{s4f}. 
\end{proof}

With the help of Theorem \ref{green1} and Lemmas \ref{S_1}--\ref{green3a}, we are able to prove Theorem \ref{thm1} as follows.

\begin{proof}[\underline{\textbf{Proof of Theorem \ref{thm1}}}]First, we deal with \eqref{t1}--\eqref{t7}.
Let $f$ be a solution to the IVP problem \eqref{VPFP4}--\eqref{VPFP6} for $t>0$. We can represent this solution as
\be
f(t,x,v)=G(t)\ast f_0+\intt G(t-s)\ast H(f)(s) ds,
\ee
where $H(f)$ is the nonlinear term defined by
$$ H(f)=\frac12 (v\cdot\Tdx\Phi)f-\Tdx\Phi\cdot\Tdv f.$$
Define
\bmas
 Q(t)=\sup_{0\le s\le t,\atop x\in \R^3 }
 &\Big\{
    \(\| P_0 f\|_{L^2_v}(1+|x|^2)^{\frac12}  +\| P_3 f\|_{L^2_v}\)e^{\eta_0s}(1+|x|^2)^{\frac{3}2}
   \nnm\\
&+(\| P_m f\|_{L^2_v} + |\Tdx\Phi|) e^{\eta_0s}(1+|x|^2)
\nnm\\
  & +\(\| f\|_{L^\infty_{v,3}}  + \|\Tdv f\|_{L^\infty_{v,2}} \sqrt{s}(1+\sqrt{s})^{-1} \)e^{\eta_0s}(1+|x|^2)  \Big\}.
 \emas
It holds that for  $0\le s\le t$,
\bma
| H(s,x,v)|&\le \frac12|\Tdx\Phi||vf|+|\Tdx\Phi||\Tdv f|\nnm\\
&\le CQ^2(t)(1+s^{-\frac12})e^{-2\eta_0s}(1+|x|^2)^{-2}(1+|v|)^{-2}.  \label{h1}
\ema

By Theorem \ref{green1}, we decompose
\bma
 G(t)\ast f_0&=  G_L(t)\ast f_0+W_0(t)\ast f_0+(G_H-W_0)(t)\ast f_0, \label{gt1}\\
\intt G(t-s)\ast H(s) ds&= \intt G_L(t-s)\ast H(s)ds+\intt W_{0}(t-s)\ast H(s)ds\nnm\\
&\quad+\intt (G_H-W_0)(t-s)\ast  H(s)ds. \label{gt2}
\ema
By \eqref{h1}--\eqref{gt2},  Theorem \ref{green1}  and Lemmas \ref{S_1}--\ref{green3a}, and noting that $(H(t,x),\sqrt{M})=0 $, we obtain
\bma
\| P_0 f(t,x)\|_{L^2_v}
&\le C\delta_0e^{-\eta_0t}(1+|x|^2)^{-2} +Ce^{-\eta_0t}(1+|x|^2)^{-2} Q^2(t), \label{density}\\
\| P_m f(t,x)\|_{L^2_v}&\le C\delta_0e^{-\eta_0t}(1+|x|^2)^{-1} +Ce^{-\eta_0t}(1+|x|^2)^{-1} Q^2(t), \label{momentum}\\
\| P_3 f(t,x)\|_{L^2_{v}}&\le C\delta_0e^{-\eta_0t}(1+|x|^2)^{-\frac{3}2}  +Ce^{-\eta_0t}(1+|x|^2)^{-\frac{3}2} Q^2(t). \label{micro}
\ema
In addition, it holds that
\bma
| \Tdx\Phi(t,x)|&= \left| \intr \frac{x-y}{|x-y|^3}(f(t,y),\sqrt{M})dy\right|\nnm\\
&\le Ce^{-\eta_0t}(1+|x|^2)^{-1} (\delta_0+Q^2(t)). \label{field}
\ema

By \eqref{VPFP4}, we have
$$
\dt f- Af=2f+v\cdot\Tdx \Phi \sqrt{M}+H,
$$
where $A=L-2-(v\cdot\Tdx) $. Thus, we can represent $ f$ as
\bq  f(t,x,v)= G_1(t)\ast f_0+\intt  G_1(t-s)\ast (2f+v\cdot\Tdx \Phi \sqrt{M}+H)ds.\label{s0}\eq
By Lemma \ref{S_1}, it holds that
\be
|G_1(t)\ast f_0(x,v)| \le C\delta_0e^{-t}(1+|x|^2)^{-2}(1+|v|)^{-3}.\label{s2}
\ee
By \eqref{density}--\eqref{micro}, we have
$$
\| f(t,x)\|_{L^2_{v}}\le  C(\delta_0+Q(t)^2)e^{-\eta_0t}(1+|x|^2)^{-1} ,
$$
which together with  \eqref{s4}, \eqref{h1} and \eqref{field}, we have
\bma
&\bigg|\intt  G_1(t-s)\ast(2f+v\cdot\Tdx \Phi \sqrt{M}+H)ds\bigg|\nnm\\
\le& C(\delta_0+Q(t)^2)e^{-\eta_0t}(1+|x|^2)^{-1}. \label{s2b}
\ema
Thus, it follows from \eqref{s0}--\eqref{s2b} that
$$
| f(t,x,v)|\le  C (\delta_0+Q(t)^2)e^{-\eta_0t}(1+|x|^2)^{-1}.
$$
By induction  and \eqref{s4d},  we have
\be
| f(t,x,v)|\le  C(\delta_0+Q(t)^2)e^{-\eta_0t}(1+|x|^2)^{-1}(1+|v|)^{-3}. \label{s1b}
\ee

Next, we estimate  $\Tdv f$ as follows.
By \eqref{s0}, we have
\bq  \Tdv f(t,x)=\Tdv G_1(t)\ast f_0+\intt \Tdv G_1(t-s)\ast (2f+v\cdot\Tdx \Phi \sqrt{M}+H)ds.\label{s1}\eq
By Lemma \ref{S_1}, it holds that
\be
|\Tdv G_1(t)\ast f_0(x,v)| \le C\delta_0t^{-\frac12}e^{-t}(1+|x|^2)^{-2}(1+|v|)^{-3}.\label{s3a}
\ee
By \eqref{s1b}, \eqref{field}, \eqref{h1} and \eqref{s4e}, we have
\bma
&\bigg|\intt \Tdv  G_1(t-s)\ast(2f+v\cdot\Tdx \Phi \sqrt{M}+H)ds\bigg|\nnm\\
\le& C(\delta_0+Q(t)^2)e^{-\eta_0t}(1+|x|^2)^{-1}(1+|v|)^{-2}. \label{s3}
\ema
Thus, it follows from \eqref{s1}--\eqref{s3} that
\be
|\Tdv  f(t,x,v)|\le  C (\delta_0+Q(t)^2)(1+t^{-\frac12})e^{-\eta_0t}(1+|x|^2)^{-1}(1+|v|)^{-2}.\label{v1}
\ee

Combining   \eqref{density}--\eqref{field}, \eqref{s1b} and \eqref{v1}, we have
$$
Q(t)\le C\delta_0+CQ(t)^2,
$$
from which \eqref{t1}--\eqref{t7} can be  verified so long as $\delta_0>0$ is small enough.
Similarly,
we can prove \eqref{t1a}--\eqref{t7a} for the case of $P_0f_0 =0$, the details are omitted.

Finally, we prove the existence of the solution $f$  by the following iteration: $f^0 \equiv0$,
\be
f^{n}(t,x,v)=G(t)\ast f_0+\intt G(t-s)\ast H(f^{n-1})ds,\quad n\ge 1. \label{fk}
\ee
We want to show that $\{f^n\}_n$ is a Cauchy sequence in $L^{\infty}(\R_+\times \R^3_x\times \R^3_v)$. By the above estimates and using induction argument, we can obtain that when $\delta_0>0$ sufficiently small,
\be
\| f^n(t,x)\|_{L^\infty_{v,3}}+h(t)\| \Tdv f^n(t,x)\|_{L^\infty_{v,2}}+|\Tdx \Phi^n(t,x)|\le  C \delta_0 e^{-\eta_0t}(1+|x|^2)^{-1}, \label{fk1}
\ee
where $h(t)=\sqrt{t}/(1+\sqrt{t})$, $\Phi^n(t,x)=\Delta^{-1}_x(f^n(t,x),\sqrt M)$, and $C>0$ is a constant independent of $n$.  Let
$$g^{n}=f^{n+1}-f^{n},\quad \phi^{n}=\Phi^{n+1}-\Phi^{n},\quad n\ge 0.$$
Then, $g^n$ satisfies
\be
g^n(t,x,v)= \intt G(t-s)\ast (H^n_1+H^n_2) ds,\quad n\ge 1, \label{gk}
\ee
where
$$
\left\{\bln
H^n_1&=\(\frac12vf^n+\Tdv f^n\)\cdot\Tdx\phi^{n-1},\\
H^n_2&=\(\frac12vg^{n-1} +\Tdv g^{n-1} \)\cdot\Tdx\Phi^{n-1}.
\eln\right.
$$
By \eqref{fk1}, \eqref{gk} and using induction argument, we have
$$
\| g^n(t,x)\|_{L^\infty_{v,3}}+h(t)\| \Tdv g^n(t,x)\|_{L^\infty_{v,2}}+|\Tdx \phi^n(t,x)|\le  (C \delta_0)^{n+1}e^{-\eta_0t}(1+|x|^2)^{-1} . $$
Thus $\{f^n\}_{n}$ is a Cauchy sequence in $L^{\infty}(\R_+\times \R^3_x\times \R^3_v)$,  there exists a unique function $f(t,x,v)$ such that
\be \|f^n(t,x)-f(t,x)\|_{L^\infty_{v,3}} \le (C \delta_0)^{n+1}e^{-\eta_0t}(1+|x|^2)^{-1} \to 0,\quad n\to \infty.\label{gk1}\ee
This together with \eqref{fk} and \eqref{gk1} imply that $f (t,x,v)$ satisfies the following equation:
$$
f(t,x,v)=G(t)\ast f_0+\intt G(t-s)\ast H(f) ds.
$$
Thus $f(t,x,v)$ is the unique solution to the VPFP system \eqref{VPFP4}--\eqref{VPFP6}.
This completes the proof.
\end{proof}

\bigskip
\noindent {\bf Acknowledgements:} The author would like to thank Haitao Wang for helpful suggestions.
The author is supported by National Natural Science Foundation of China Nos. 11671100 and 12171104, the National Science Fund for Excellent Young Scholars No. 11922107, and Guangxi Natural Science Foundation Nos. 2018GXNSFAA138210 and 2019JJG110010.


\end{document}